\let\oldtocsection=\tocsection
\let\oldtocsubsection=\tocsubsection
\let\oldtocsubsubsection=\tocsubsubsection
\renewcommand{\tocsection}[2]{\hspace{0em}\oldtocsection{#1}{#2}}
\renewcommand{\tocsubsection}[2]{\hspace{1em}\oldtocsubsection{#1}{#2}}
\renewcommand{\tocsubsubsection}[2]{\hspace{2em}\oldtocsubsubsection{#1}{#2}}
\newcounter{notes}
\def\co{\colon\thinspace} 
\def\a{\alpha}
\def\e{\epsilon} 
\def\g{\gamma} 
\def\l{\lambda}
\def\s{\sigma} 
\def\t{\tau} 
\def\th{\theta} 
\def\cal{\mathcal} 
\def\CC{\mathcal C} 
\def\FN{\mathrm{FN}} 
\def\Chat{\hat {\mathbb C}}
\def\E{{\cal E}} 
\def\F{{\cal F}} 
\def\H{{\cal H}} 
\def\M{{\cal M}} 
\def\S{{\cal S}} 
\def\DD{{\cal D}}
\def\QF{{\mathcal {QF}}} 
\def\L{{\cal L}} 
\def\P{{\cal P}} 
\def\PC{\mathcal {PC}} 
\def\FN{\mathrm{FN}}
\def\pl{\mathrm{pl}}
\def\T{{\cal T}} 
\newcommand{\C}{\mathbb{C}} 
\newcommand{\A}{\mathbb{A}} 
\newcommand{\R}{\mathbb{R}} 
\newcommand{\LL}{\mathbb{L}} 
\newcommand{\Z}{\mathbb{Z}}
\newcommand{\D}{\mathbb{D}} 
\newcommand{\HH}{\mathbb{H}} 
\def\Tr{\mathop{\rm Tr}} 
\def\Fix{\mathop{\rm Fix}} 
\def\Ax{\mathop{\rm Axis}} 
\def\Dev{\mathop{\rm Dev}} 
\def\AH{\mathop{\rm AH}}
\def\SL{\mathop{\rm SL}} 
\def\PSL{\mathop{\rm PSL}}
\def\Id{\mathop{\rm Id}} 
\renewcommand{\to}{\longrightarrow} 
\newcommand{\dd}{\partial}
\def\Arg{\mathop{\rm{Arg}}}
\def\ML{\mathop{\rm ML}} 
\def\PML{\mathop{\rm PML}}
\def\QF{{\cal {QF}}} 
\def\sech{\mathop{\rm{sech}}} 
\def\Tw{\mathop { {Tw}}}
\newcommand{\BM}{\mathcal{BM}} 
\def\teich{{\cal T}}
\def\ull{{\underline l}} 
\def\utt{{\underline t}}
\def\utb{{\underline{\textbf{t}}}}
\def\um{{\underline \mu}} 
\def\umb{{\underline{\boldsymbol\mu}}}
\def\ul{{\underline \l}} 
\def\ut{{\underline \t}}
\def\uc{{\underline c}}
   \def\MR#1{}
\newtheorem{Theorem}{Theorem}[section]
\newtheorem{Lemma}[Theorem]{Lemma}
\newtheorem{Proposition}[Theorem]{Proposition}
\newtheorem{Conjecture}[Theorem]{Conjecture}
\newtheorem{Corollary}[Theorem]{Corollary}
\newtheorem{introthm}{Theorem}
\newtheorem*{ConjectureNO}{Conjecture}
\theoremstyle{definition}
\newtheorem{Definition}[Theorem]{Definition}  
\theoremstyle{remark}
\newtheorem{Remark}[Theorem]{Remark}
\begin{document}

\title{c--gluing construction and slices of quasi-Fuchsian space}

\author{Sara Maloni}
\address{Department of Mathematics, University of Virginia}
\email{sm4cw@virginia.edu}
\urladdr{http:/www.people.virginia.edu/$\sim$sm4cw}

\thanks{The author was partially supported by the National Science Foundation under the grant DMS-1506920 and DMS-1650811, and by U.S. National Science Foundation grants DMS-1107452, 1107263, 1107367 ``RNMS: GEometric structures And Representation varieties'' (the GEAR Network).} 

\begin{abstract}
Given a pants decomposition $\mathcal{PC} = \{\gamma_1, \ldots, \gamma_{\xi}\}$ on a hyperbolizable surface $\Sigma$ and a vector $\uc = (c_1, \ldots, c_{\xi}) \in \R_+^\xi$, we describe a plumbing construction  which endows $\Sigma$ with a complex projective structure for which the associated holonomy representation $\rho$ is quasi-Fuchsian and for which $\ell_\rho(\gamma_i) = c_i$. When $\uc \to \underline{0} = (0, \ldots, 0)$ this construction limits to Kra's plumbing construction. In addition, when $\Sigma = \Sigma_{1,1}$, the holonomy representations of these structures belong to the `linear slice' of quasi-Fuchsian space $\QF(\Sigma)$ defined by Komori and Parkonnen. We discuss some conjectures for these slices suggested by the pictures we created in joint work with Yamashita. 
\end{abstract}
\maketitle 

\tableofcontents

\section{Introduction} \label{sec:introduction} 

Given a closed orientable surface $\Sigma$ of genus $g\geq 2$, the space $\AH(\Sigma)$ of hyperbolic $3$--manifolds homotopy equivalent to $\Sigma \times [0,1]$ can be naturally identified with the space of discrete and faithful representations from the surface group $\pi_1(\Sigma)$ into $\PSL(2, \C)$. While the interior of $\AH(\Sigma)$ has been well-understood since the 1970s and consists of quasi-Fuchsian (or convex-cocompact) representations, the topology of $\AH(\Sigma)$ is much less well-behaved. In fact, it is not even locally connected \cite{bro_the, mag_def}. Understanding the topology of $\AH(\Sigma)$  is quite difficult, so one can focus on some of its slice. In order to study these slices we need to define `good' coordinates for these deformation spaces, such as the ones provided by plumbing constructions. Plumbing constructions have been introduced in the 70's in order to describe holomorphic coordinates for moduli space of hyperbolizable surfaces which are intrinsic and which can be extended at infinity. Among these constructions, two of the most famous constructions are due to Kra \cite{kra_hor} and Earle and Marden \cite{ear_hol} and are associated to two well-known slices of important slices of $\AH(\Sigma)$: the Maskit slice and the Bers slice, respectively.

The construction discussed in this article is inspired by Kra's construction. The idea of Kra's plumbing construction is the following. Let $\Sigma$ be a closed orientable hyperbolizable surface and let $\PC = \{\s_1, \dots, \s_\xi\}$ be a pants decomposition of $\Sigma$. Kra's plumbing construction describes a complex projective structure on $\Sigma$ as follows. Identify each pair of pants with a thrice punctured sphere, truncate the pairs of pants along a horosphere and glue, or ``plumb'', the truncated pairs of pants along annuli homotopic to the punctures. The gluing across the i-th pants curve is defined by parameters $\mu_i\in \mathbb{C}$ which correspond to `horocyclic coordinates' in punctured disk neighbourhoods of the two punctures.  The holonomy representation $\rho\co \pi_1(\Sigma) \to \PSL(2,\C)$ associated with the above complex projective structure depends holomorphically on the $\mu_i$, and, by construction, the images of the pants curves are parabolic elements. In joint work with Series \cite{mal_top} we study a different and slightly simpler description of Kra's plumbing construction which respects the twisting around the puncture. This allows us to define coordinates in a particular slice of $\AH(\Sigma)$. Given an element $\gamma \in \pi_1(\Sigma)$ associated to a simple closed curve of $\Sigma$, the trace of $\rho(\gamma)$ is a polynomial in the $\mu_i$, and the main result of \cite{mal_top} is a relationship between the coefficients of the top terms of that polynomial and the Dehn-Thurston coordinates of $\gamma$ relative to $\PC$. If the developing map associated with the projective structure is an embedding, then the associated hyperbolic $3$--manifold $\HH^3/\rho(\pi_1(\Sigma))$ lies on the Maskit slice $\mathcal{M} = \mathcal{M}(\Sigma)$, the space of geometrically finite groups on the boundary of quasi-Fuchsian space $\mathcal{QF}(\Sigma)$ for which the `bottom' end consists of triply punctured spheres obtained from $\Sigma$ by pinching the pants curves in $\PC$. In \cite{mal_asy} using results from \cite{mal_top} and a careful analysis of the geometry of the convex core of the associated manifolds, we describe the asymptotic direction of pleating rays in $\mathcal{M}$ supported on multicurves. (Recall that given a projective measured lamination $[\eta]$ on $\Sigma$, the pleating ray $\mathcal{P} = \mathcal{P}_\eta$ is the set of representations in $\mathcal{M}$ for which the bending measure of the top component of the boundary of the convex core of the associated $3$--manifold is in $[\eta]\in \mathrm{PML}(\Sigma)$.)

In this article, we define a more general plumbing construction, called the $\uc$--plumbing construction, where $\uc = (c_1, \ldots, c_\xi)  \in (\R_{> 0})^\xi$. The idea is the following. Let $\PC = \{\s_1, \dots, \s_\xi\}$ be a pants decomposition  of a hyperbolizable surface $\Sigma = \Sigma_{g, b}$ of genus $g$ and $b$ punctures and with complexity $\xi = 3g-3+b>0$. Identify each pair of pants with a three-holed sphere so that the length of the boundary components corresponding to $\s_i$ is $2c_i$ for each $i = 1, \ldots, \xi$. Then truncate these pairs of pants by cutting  along annuli parallel to the boundary components and `plumb' adjacent pants along annuli parallel to the boundary components. The gluing is defined by complex parameters $\boldsymbol\mu_i \in \C_{[0,\pi)} = \{z \in \C \mid \mathrm{Im}z \in [0,\pi)\}$. This defines a complex projective structure on $\Sigma$ with holonomy representation $\rho_{\uc, \umb}\co\pi_1(\Sigma) \to \PSL(2, \C)$ and developing map $\mathrm{Dev}_{\uc, \umb}\co\widetilde{\Sigma} \to \mathbb{CP}^1$. A natural question is: what is the relationship between this construction and Kra's construction described above? In Section \ref{sec:lim} we show the following limiting behaviour. 

\begin{introthm}\label{thmE} 
  Let $\uc = (c_{1},\ldots, c_{\xi}) \in \R_{+}^{\xi}$ and $\umb = ({\boldsymbol\mu}_1, \ldots, {\boldsymbol\mu}_\xi)\in (\C_{[0, \pi)})^\xi$. If $\uc \to \underline{0}$ keeping $\um  = (\mu_1, \ldots, \mu_\xi)$ fixed, where $\mu_i = \frac{i\pi-{\boldsymbol\mu}_i}{c_i}$ (and $\Im{\mu}_i > 0$ for all $i = 1,\ldots, \xi$), then the complex structures $\Sigma(\uc, \umb)$ defined by the $\uc$--gluing construction with parameter $\umb$ limits to the complex projective structure $\Sigma(\um)$ defined by the gluing construction (of \cite{mal_top}) with parameter $\um = (\mu_1, \ldots, \mu_\xi)\in \HH^\xi$. 
\end{introthm}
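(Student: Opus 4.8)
The plan is to reduce the statement to a local computation near each pants curve $\sigma_i$, since both the $\uc$--gluing construction and the construction of \cite{mal_top} are performed piece-by-piece on the same pants decomposition $\PC$, differing only in how the truncated pairs of pants are glued across the annuli. The global complex projective structure is assembled from charts that agree away from neighbourhoods of the $\sigma_i$ (each truncated pair of pants, identified with a three-holed sphere, depends on $\uc$ only through the boundary lengths $2c_i$, which degenerate uniformly to the thrice-punctured sphere as $\uc\to\underline 0$), so convergence of the structures $\Sigma(\uc,\umb)$ to $\Sigma(\um)$ follows once we show the gluing maps converge in the appropriate sense. Concretely, I would fix coordinates on collar neighbourhoods of the two boundary components being glued along $\sigma_i$, so that the $\uc$--gluing is given by an explicit element of $\PSL(2,\C)$ (or a Möbius transformation between annular charts) depending on $c_i$ and $\boldsymbol\mu_i$, and the \cite{mal_top} gluing is the analogous element depending on $\mu_i$; then the content of the theorem is that the former converges to the latter under the substitution $\mu_i = (i\pi - \boldsymbol\mu_i)/c_i$ as $c_i\to 0$.

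**The key computation** is the passage from a hyperbolic (geodesic-boundary) collar to a parabolic (cusp) collar. The boundary component of length $2c_i$ is the axis of a loxodromic element whose trace, in a suitable normalization, is $2\cosh c_i$; the gluing across it should be conjugate to $z\mapsto e^{2c_i} z$ composed with a ``horocyclic shift'' encoded by $\boldsymbol\mu_i$. The claim is that, after the reparametrization sending the geodesic collar coordinate to the cusp coordinate (this is where the factor $1/c_i$ enters: the horocyclic coordinate on the thrice-punctured sphere is the $c_i\to 0$ limit of $c_i^{-1}\times$ the collar coordinate, matching the blow-up in $\mu_i = (i\pi-\boldsymbol\mu_i)/c_i$), the loxodromic gluing element converges to the parabolic gluing element of \cite{mal_top}, with $i\pi$ appearing because the half-turn in the plumbing of \cite{mal_top} corresponds to the imaginary part $\pi$ in $\boldsymbol\mu_i$. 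I would carry this out by writing both gluing maps as products of a fixed ``transfer'' map (identifying the two annular charts near $\sigma_i$) with the elementary maps $z\mapsto e^{2c_i}z$ resp.\ $z\mapsto z+1$, and then checking that $\exp\!\big(\operatorname{diag}(c_i,-c_i)\big)$ conjugated by the rescaling $z\mapsto c_i z$ tends to the parabolic $\begin{pmatrix}1&1\\0&1\end{pmatrix}$, absorbing the $\boldsymbol\mu_i$--dependence into this limit. Once the local gluing maps converge, the developing maps $\Dev_{\uc,\umb}$ converge uniformly on compact subsets of $\widetilde\Sigma$ (being obtained by analytic continuation of a fixed initial chart across a fixed finite pattern of gluings), hence the complex projective structures converge, which is the assertion.

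**The main obstacle** I anticipate is bookkeeping rather than conceptual: one must set up the normalizations of the three-holed-sphere charts and of the annular gluing regions carefully enough that the limit $c_i\to 0$ of the hyperbolic picture literally recovers the normalization of \cite{mal_top} used to define $\Sigma(\um)$ — in particular getting the $i\pi$ and the $1/c_i$ scaling to land exactly, and checking that the twisting conventions (which boundary is glued to which, and with what orientation) are consistent between the two constructions. A secondary point requiring care is uniformity: since $\xi$ gluings are performed and the charts overlap, I should verify that the convergence of each local gluing map is locally uniform in a way that propagates to a global statement about the projective structure (e.g.\ via the fact that $\PSL(2,\C)$--cocycles over the fixed graph dual to $\PC$ depend continuously on the edge labels), and that the hypothesis $\Im\mu_i>0$ (equivalently $\Im\boldsymbol\mu_i<\pi$, keeping $\boldsymbol\mu_i$ in the open strip) is exactly what keeps the limiting structure discrete and the construction non-degenerate throughout the limit.
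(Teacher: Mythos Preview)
Your plan is essentially the one the paper follows: reduce to convergence of (i) the pants-group generators and (ii) the local gluing maps across each $\sigma_i$, and conclude convergence of developing maps. The paper executes this by direct computation with the explicit matrices of the construction rather than your abstract conjugation picture. It first checks via Taylor expansion that the generators $A_\infty, A_0, A_1$ of $\boldsymbol\Gamma(c_{i_1},c_{i_2},c_{i_3})$ converge entrywise to the standard parabolic generators of the thrice-punctured-sphere group, and that the hypercycles $\mathbf{h}_{\infty,\HH}$ converge to the horocycles of \cite{mal_top}. Then the heart of the argument is the matrix identity
\[
\mathbf{J}^{-1}\mathbf{T}_{\boldsymbol\mu}^{-1}
= i\begin{pmatrix}
\cosh\frac{\mu c_i}{2} & -\sinh\frac{\mu c_i}{2}\,\coth\frac{c_i}{2}\\[2pt]
\sinh\frac{\mu c_i}{2}\,\tanh\frac{c_i}{2} & -\cosh\frac{\mu c_i}{2}
\end{pmatrix}
\longrightarrow
i\begin{pmatrix}1&-\mu\\0&-1\end{pmatrix}
= J^{-1}T_\mu^{-1},
\]
obtained by substituting $\boldsymbol\mu = i\pi - \mu c_i$ and using $\sinh\frac{\mu c_i}{2}\coth\frac{c_i}{2}\to\mu$; finally $\boldsymbol\Omega_\e\to\Omega_\e$ is deduced from the convergence of the axes. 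This is exactly your ``gluing maps converge'' step, but the object that must be shown to converge is the transition map $\mathbf{T}_{\boldsymbol\mu}\mathbf{J}$ itself, not the boundary holonomy $z\mapsto e^{2c_i}z$; your sketch conflates these two when you propose conjugating $\exp(\mathrm{diag}(c_i,-c_i))$ by a rescaling. That intuition is what makes the pants groups degenerate to parabolics, but the $i\pi$ and $1/c_i$ in the relation $\mu_i=(i\pi-\boldsymbol\mu_i)/c_i$ enter through the gluing map, via the identities $\sinh(i\pi/2 - x)=i\cosh x$ and the $\coth(c_i/2)\sim 2/c_i$ factor in the off-diagonal entry, not through the boundary holonomy. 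With that correction your outline matches the paper's proof.
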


When the developing map $\mathrm{Dev}_{\uc, \umb}$ is an embedding, we prove that the holonomy representation $\rho_{\uc, \umb}$ lies in the quasi-Fuchsian space and the length of the pants curves $\s_i$ is $2c_i$. In particular, when $\Sigma = \Sigma_{1,1}$ is a once-punctured torus (and $\PC = \{\s\}$ and $c>0$), the representation $\rho_{\uc, \umb}$ lies in the \textit{linear slice} $\L_{c}(\Sigma_{1,1})$ of the quasi-Fuchsian space $\QF(\Sigma_{1,1})$ as defined by Komori and Parkonnen~\cite{kom_ont}:
$$\L_{c}(\Sigma_{1,1}) = \{\t \in \C/2\pi i\Z \mid (c,\t) \in \FN_{\C}\left(\QF(\Sigma_{1,1})\right)\},$$
where $\FN_{\C} \co \QF(\Sigma_{1,1} \to \C_+/2\pi i\Z \times \C/2\pi i\Z$ is the (complex) Fenchel-Nielsen parametrization of $\QF(\Sigma_{1,1})$. This slice has a connected component, the \textit{Bers--Maskit slice} $\BM_c(\Sigma)$, containing the Fuchsian locus $\tau \in \R \cap \L_{c},$ and its points correspond to quasi-Fuchsian manifolds whose convex core is bent along $\sigma$. See also McMullen \cite{mcm_com}. Komori and Yamashita~\cite{kom_lin} proved that there exist two real constants $0 < C_0 < C_1$ such that, for any $0< c < C_0$, the linear slice coincides with the Bers--Maskit slice, while, for all $c > C_1$, the linear slice has many connected components. Together with Yamashita, we wrote a computer program which draws the slices $\L_c(\Sigma_{1,1})$ for different values of the parameter $c$, see Figure \ref{conn} and \ref{conn2}. In Section \ref{sub:extra_components} we describe our ideas about how to define the slice for a general hyperbolizable surface $\Sigma$ and how to generalise some of the results about its connected components. For example, among many questions and conjectures that Figure \ref{conn} and \ref{conn2} suggest, we want to underline the following conjecture. (Remember that the the total Maskit slice $\M^{tot}(\Sigma)$ is the space of geometrically finite groups on the boundary of quasi-Fuchsian space $\mathcal{QF}(\Sigma)$ for which one end is homeomorphic to $\Sigma$, while the other end consists of triply punctured spheres obtained by piching all the the pants curves in the pants decomposition $\PC$.)

\begin{ConjectureNO}[Conjecture \ref{union}]
  Given a hyperbolizable surface $\Sigma$ together with a pants decomposition $\PC$ and two positive numbers $c_1, c_2 \in \R_+$, we have the following:
  \begin{enumerate}
  \item If $c_1 \leqslant c_2,$ then $BM_{c_2} \subseteq BM_{c_1}$.
  \item $\M^{tot} = \cup_{c > 0} BM_{c}.$
  \end{enumerate}
\end{ConjectureNO}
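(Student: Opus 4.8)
The plan is to prove both statements of the conjecture after transporting everything into a common parameter space. I would use the rescaling of Theorem~\ref{thmE}, $\mu_i = (i\pi - \boldsymbol\mu_i)/c$, to identify $BM_c$ with the region $\widetilde{BM}_c$ it carves out in the ``Maskit'' parameter $\um$; since $\M^{tot}(\Sigma)$ is itself coordinatised by Kra's plumbing construction (as in \cite{mal_top}), $\M^{tot}(\Sigma)$ and all the $\widetilde{BM}_c$ then live in the same copy of $\C^\xi$, and the conjecture becomes a comparison of regions there. (For $\Sigma = \Sigma_{1,1}$ this is just the passage between Komori--Parkkonen's slice coordinate $\t$ on $\L_c$ and the classical Maskit parameter.) With this identification, part (1) says the regions $\widetilde{BM}_c$ shrink as $c$ grows, and part (2) says their increasing union as $c\to 0^+$ exhausts $\M^{tot}(\Sigma)$.

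For part (1) I would argue by deformation in the length parameter. Fix $\umb$ whose image $\um$ lies in $\widetilde{BM}_{c_2}$, so $\rho_{\uc,\umb}$ is quasi-Fuchsian, $\ell_\rho(\sigma_i) = 2c_2$, and lies in the component of the linear slice containing the Fuchsian locus. As $c$ decreases from $c_2$ to $c_1$ with $\um$ held fixed (so $\boldsymbol\mu_i = i\pi - c\mu_i$ varies), Theorem~\ref{thmE} and continuity of the $\uc$--plumbing construction give a continuous path of complex projective structures; one must show that $\mathrm{Dev}_{\uc,\umb}$ continues to embed along this path (so that the holonomy stays in $\QF(\Sigma)$, in $\L_c$ for the current value of $c$) and that it never leaves the Bers--Maskit component. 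The harder half is a monotonicity statement: \emph{shortening the pants curves at a fixed Maskit coordinate cannot push the structure out of quasi-Fuchsian space}. Equivalently, in the complex-earthquake description of $BM_c$ --- obtained by quakebending Fuchsian structures with $\ell(\sigma_i) = 2c$ along $\PC$ --- one wants: if a quakebend with prescribed bending angles is quasi-Fuchsian at length $c_2$, the quakebend realising the same bending angles at length $c_1 < c_2$ is still quasi-Fuchsian. I expect this to be the main obstacle. The natural attack is to bound the total bending of the convex core by the length of its bending locus --- so that a shorter bending geodesic carries proportionally less bending --- using convexity of the convex core together with the top-term estimates on the trace polynomials from \cite{mal_top} and the convex-core analysis of \cite{mal_asy}; the component claim should then follow from continuity and openness, being automatic once $c < C_0$ where the linear slice is connected.

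For part (2), the inclusion $\cup_{c>0}\widetilde{BM}_c \subseteq \M^{tot}(\Sigma)$ should follow once part (1) is available: for $\um$ in this union the family $c'\mapsto \rho_{\uc',\umb(c',\um)}$, $c'\in(0,c]$, is a path of quasi-Fuchsian groups of uniformly bounded geometry whose pants curves shrink to zero length, so by Theorem~\ref{thmE} its projective structures converge to Kra's plumbing at $\um$; one upgrades this to convergence of holonomies to a discrete faithful representation with one end $\Sigma$ and the other the totally pinched surface, i.e.\ a point of $\M^{tot}(\Sigma)$. For the reverse inclusion, start from $\Gamma \in \M^{tot}(\Sigma)$ with Kra parameter $\um$; since $\Gamma$ is geometrically finite and lies on $\dd\QF(\Sigma)$, replacing the accidental parabolics $\rho(\sigma_i)$ by loxodromics of translation length $2c$ on the quasi-Fuchsian side produces, for all small $c$, a quasi-Fuchsian group lying in $\L_c$; for $c$ below the Komori--Yamashita constant $C_0$ --- where $\L_c = BM_c$, proved for $\Sigma_{1,1}$ in \cite{kom_lin} and conjecturally in general --- this group lies in $BM_c$, whence $\um\in\widetilde{BM}_c$. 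Beyond the monotonicity of part (1), the subtle points are controlling the $c\to 0^+$ degeneration (the uniform geometric bounds from part (1) are what should guarantee the limit is discrete and lands in $\M^{tot}$ rather than a more degenerate boundary group), and, for general $\Sigma$, the lack of a Komori--Yamashita-type theorem, so that the identity $\L_c = BM_c$ for small $c$ would itself have to be proved.
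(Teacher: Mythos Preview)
The statement you are trying to prove is presented in the paper as a \emph{conjecture}, not a theorem: the paper offers no proof whatsoever. The only evidence given is the computer pictures in Figures~\ref{conn} and~\ref{conn2} (drawn under the assumption that Bowditch's conjecture holds), together with the remark ``We hope to explore further these slices in a future paper.'' So there is no argument in the paper against which to compare your proposal.

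Your write-up is not a proof but a proof \emph{plan}, and you are candid about this: you flag the monotonicity step in part~(1) as ``the main obstacle'' and note that the Komori--Yamashita identification $\L_c = BM_c$ for small $c$ is only known for $\Sigma_{1,1}$. Those are genuine gaps, not technicalities. In particular, the heuristic ``a shorter bending geodesic carries proportionally less bending'' is not something that follows from the trace estimates of \cite{mal_top} or the convex-core analysis of \cite{mal_asy}; those papers control top-order behaviour of trace polynomials and asymptotic directions of pleating rays in the Maskit slice, not uniform quasi-Fuchsian bounds along a one-parameter family of quakebends at varying length. Likewise, for part~(2), the step ``replacing the accidental parabolics by loxodromics of translation length $2c$ on the quasi-Fuchsian side produces a quasi-Fuchsian group lying in $\L_c$'' is exactly the content of the conjecture near the boundary and cannot be assumed. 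As a roadmap the outline is reasonable, but it should be presented as motivation for the conjecture rather than as a proof.
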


We hope to explore further these slices in a future paper.

\subsection{Acknowledgements} We are grateful to Yasushi Yamashita to helping us drawing the pictures of Figures \ref{conn}, \ref{conn3}, \ref{conn2} and to Francesco Bonsante, Brian Bowditch, David Dumas, Brice Loustau, John Parker and Caroline Series for interesting conversations and helpful comments.  

\section{Background material} \label{sec:bacground}

\subsection{Curves on surfaces}

Suppose $\Sigma$ is a surface of finite type, let $\S_0 = \S_0(\Sigma)$ denote the set of free homotopy classes of connected closed simple non-trivial non-peripheral curves on $\Sigma$. Let $\S = \S(\Sigma)$ be the set of free homotopy classes of multicurves on $\Sigma$, where a \textit{multicurve} is a finite unions of disjoint simple closed curves in $\S_0$. The \textit{geometric intersection number} $i(\a,\a')$  between multicurves $\a,\a' \in \S$ is defined by $$i(\a, \a') = \min_{a \in \a , \; a' \in \a'} |a \cap a'|.$$ 

Given a surface $\Sigma = \Sigma_{g, b}$ of finite type (with genus $g$ and $b$ punctures) and negative Euler characteristic $\xi(\Sigma)$, choose a maximal set $\PC = \{ \s_{1}, \ldots, \s_{\xi}\}$ of homotopically distinct curves in $\Sigma$ called \emph{pants curves}, where  $\xi = \xi(\Sigma) = 3g-3+b$ is the complexity of the surface. These curves split the surface into $k=2g-2+b = -\chi(\Sigma)$ three-holed spheres $P_1,\ldots, P_k$, called \emph{pairs of pants}. (Note  that  the boundary of $P_i$ may include  punctures of $\Sigma$.)  We refer to both the set $\P = \{ P_1,\ldots, P_k\}$ and the set $\PC $ as \emph{pants decompositions} of  $\Sigma$. Any hyperbolic pair of pants $P$ is made by gluing two (maybe degenarate) right angled hexagons along three alternate edges which we call its \emph{seams}. We will consider \emph{dual curves} $D_i$ to the pants curves $\s_i \in \PC$, that is, curves which intersect $\s_i$ minimally and such that $i(D_i, \sigma_j) = 0$ for all $j\neq i$. 
\subsubsection{Fenchel--Nielsen twist deformation}\label{sec:dehntwists}

Our convention is always to consider twists to the right as positive. In particular, given a surface $\Sigma$, a curve $\s \in \S_0$ and $t \in \R$, the distance $t$ \emph{(right) Fenchel--Nielsen twist deformation} around $\s$ is the homeomorphism ${\Tw}_{\s, t}\co \Sigma \to \Sigma$ defined in the following way. Let $\A = \A(\s) = \s \times [0,1]$ be a (small) embedded annulus around $\s$. If we parameterise $\s$ as $s \mapsto \s(s) \in \Sigma$ for $s \in [0,1)$, then the distance $t$ twist, denoted ${\Tw}_{\s, t} \co \Sigma \to \Sigma$, maps $\A$ to itself by $(\s(s),\theta) \mapsto (\s(s+\theta t),\theta)$ and is the identity elsewhere. This definition extends to multicurves $\s \in \S$ by considering disjoint annuli around the curves in $\s$.

\subsubsection{Marking and marking decomposition}\label{sec:marking}

A \emph{marking} on $\Sigma$ is the specification of a fixed base (topological) surface $\Sigma_0$, together with a homeomorphism $f \co \Sigma_0 \to \Sigma$. 

There is a related notion of marking decomposition. Given the pants decomposition $\P$, we can fix a \textit{marking decomposition} on $\Sigma$ in two equivalent ways, see \cite{mal_sli}: 
\begin{enumerate}
\renewcommand{\labelenumi}{(\alph{enumi})}
    \item an \textit{involution}: an orientation--reversing map $\mathbf{R} \co \Sigma \to \Sigma$ so that for each $i = 1, \ldots, \xi$ we have $\mathbf{R}(\s_{i}) = \s_{i}$;
	\item \textit{dual curves}: for each $i$, a curve $D_{i}$ so that $i(D_{i}, \s_{j}) = 0$ if $i\neq j$ and $i(D_{i}, \s_{i})$ is minimal (so $i(D_{i}, \s_{i}) = 2$ if $\s_i$ is separating and $i(D_{i}, \s_{i}) = 1$ otherwise).
\end{enumerate}

A marking decomposition decomposes each pair of pants into two (possibly degenerate) hexagons.

\subsection{Measured laminations} \label{sub:meas_lam}

Given a surface $\Sigma$ endowed with an hyperbolic structure, a \textit{geodesic lamination} $\eta$ on $\Sigma$ is a closed set of pairwise disjoint complete simple geodesics on $\Sigma$ called its \textit{leaves}. A \textit{transverse measure} on $\eta$ is an assignment of a measure to each arc transverse to the leaves of $\eta$ that is invariant under the push forward maps along the leaves of $\eta$. A \textit{measured geodesic lamination} on $\Sigma$ is a geodesic lamination together with a transverse measure. We define the \textit{space of measured laminations} $\ML(\Sigma)$ to be the space of all homotopy classes of measured geodesic laminations on $\Sigma$ with compact support. These definitions don't depend on the hyperbolic structure chosen, but only on the topology of $\Sigma$; see, for example, \cite{pen_com}. Multiplying the transverse measure on a geodesic lamination by a positive constant gives an action of $\R_+$ on $\ML(\Sigma)$. We can therefore define the set of \textit{projective measured (geodesic) laminations} $\PML(\Sigma)$ on $\Sigma$ as the quotient $$\PML(\Sigma) = (\ML(\Sigma)\setminus 0)/\R_+,$$ where $0$ is the empty lamination.
\subsection{Complex projective structure} \label{sub:complex}
	
A \textit{(complex) projective structure} on a surface $\Sigma$ is a $(\PSL(2, \C), \hat\C)$--structure on $\Sigma$, consisting of a (maximal) open covering $\{U_{i}: i \in I\}$ of $\Sigma$, homeomorphisms $\Phi_{i}\co U_{i} \to V_i \subset \hat{\C}$ such that for all connected components $W$ of $U_{i} \cap U_{j}$, the transition functions $\Phi_{i} \circ \Phi_{j}^{-1}|_{\Phi_{j}(W)}$ are the restriction of some $g \in {\rm PSL}(2,\C)$. 

We can define the \textit{space of marked (complex) projective structure} $\P(\Sigma)$ as the set of equivalence classes $[(f,Z)]$ of pairs $(f,Z)$, where $Z$ is a (complex) projective structure on $\Sigma$ and $f \co \mathrm{int}(\Sigma) \to Z$ is an orientation preserving homeomorphism. Two pairs $(f_1,Z_1)$ and $(f_2,Z_2)$ are equivalent in $\P(\Sigma)$ if there is an orientation-preserving diffeomorphism $g \co Z_1 \to Z_2$ such that $g \circ f_1$ is isotopic to $f_2$. 

\subsubsection{Developing map and (groupoid) holonomy representation} \label{ssub:dev}

To every complex projective structure (and, more generally, to any $(G,X)$--structure) on a surface $\Sigma$, we can associate a pair $({\rm Dev},\rho)$, where:
\begin{itemize}
  \item $\rho$ is a homomorphism $\rho\co \pi_{1}(\Sigma) \to {\rm PSL}(2,\C)$, called the \textit{holonomy representation};
  \item ${\rm Dev}$ is an immersion ${\rm Dev}\co\widetilde\Sigma \to \hat{\C}$ from the universal covering space $\tilde{\Sigma}$ of $\Sigma$ to the Riemann sphere $\hat{\C}$, called the \textit{developing map}, equivariant with respect to $\rho$ and such that the restriction of $f$ to any sufficiently small open set in $\widetilde\Sigma$ is a projective chart for $Z$.
\end{itemize}

A projective structure on $\Sigma$ lifts to a projective structure on the universal cover $\widetilde{\Sigma}$. Then, a developing map can be constructed by analytic continuation starting from any base point $x_0$ in $\tilde{\Sigma}$ and any chart defined on a neighbourhood $U$ of $x_0$. Another chart (defined on $U'$) that overlaps $U$ can be modified by a M\"obius transformation so that it agrees on the overlap, in such a way that we can define a map from $U \cup U'$ to $\hat\C$. Continuing with this method one defines a map on successively larger subsets of $\tilde{\Sigma}$. The fact that $\widetilde\Sigma$ is simply connected is essential because nontrivial homotopy classes of loops in the surface create obstructions to this process. The holonomy representation $\rho \co \pi_1(\Sigma) \to \PSL(2,\C)$ is described as follows. A path $\gamma$ in $\Sigma$ passes through an ordered chain of simply connected open sets $U_{0}, \ldots, U_{n}$ such that $U_{i} \cap U_{i+1}$ is connected and non-empty for every $i =0, \ldots,n-1$. This defines the overlap maps $R_{i} = \Phi_{i} \circ \Phi_{i+1}^{-1}$ for  $i =0, \ldots, n-1$. The sets $V_i$ and $R_{i}(V_{i+1})$ overlap in $\hat \C$ and hence the developing image of $\widetilde \gamma$ in $\Chat$ passes through, in order, the sets $V_0, R_{0}(V_{1}), R_{0}R_1(V_{2})\ldots, R_{0}\cdots R_{n-1} (V_n)$. If $\gamma$ is closed, we can ask $U_{n} = U_{0}$ so that $V_0 = V_n$. Then, by definition, the holonomy of the homotopy class $[\gamma]$ is $\rho([\gamma]) = R_{0}\cdots R_{n-1} \in \PSL(2,\C)$. The group ${\rm PSL}(2,\C)$ acts on the sets of pairs $({\rm Dev},\rho)$ in the following way: given $A \in \PSL(2,\C)$, then we have $$A \cdot ({\rm Dev},\rho(\cdot)) = (A \circ{\rm Dev}, A\rho(\cdot)A^{-1}).$$

\subsubsection{Topology on $\P(\Sigma)$} 

We give $\P(\Sigma)$ the topology induced by uniform convergence of charts, or, equivalently, the quotient topology induced by the compact-open topology on the set of pairs $(\mathrm{Dev}, \rho)$. This topology is also equivalent to the locally uniform convergence of the developing maps. The space $\P(\Sigma)$ is a finite--dimensional complex manifold, diffeomorphic to a ball in $\R^{4\xi}$, where $\xi = \xi(\Sigma_{g, b}) = 3g-3+b$ is the complexity of the surface, see Dumas \cite{dum_com}.

\subsection{Teichm\"uller space and quasi-Fuchsian space} \label{sub:kleinian_groups} 

Given an oriented surface $\Sigma$ of negative Euler characteristic, the \textit{Teichm\"uller space} $\T(\Sigma)$ is the space of marked complex structures on $\Sigma$. Using the Uniformisation Theorem, the Teichm\"uller space can also be defined as the space of marked complete finite area hyperbolic structure on $\Sigma$. 

A \textit{Fuchsian group} is a discrete subgroup of $\PSL(2,\R)$, while a \textit{Kleinian group} $G$ is a discrete subgroup of $\PSL(2,\C)$. A Kleinian group $G$ acts by isometries on $\HH^3$ and by conformal automorphisms on the sphere at infinity $\hat{\C} = \C \cup \{\infty\}$. While the action of $G$ on $\HH^3$ is properly discontinuous, there are accumulation points on $\hat{\C}$. The limit set $\Lambda(G)$ is the closure of the set of accummulation points for the action of $G$ on $\HH^3$ (or $\hat{\C}$). The domain of discontinuity $\Omega(G)$ of $G$ is the set $\hat{\C} \setminus G$ and is the biggest domain of discontinuity for the action of $G$. Fuchsian groups are Kleinian groups. Another example of Kleinian groups are \textit{quasi-Fuchsian groups}, which are Kleinian group such that the limit set $\Lambda(G)$ is a topological circle. If $G\simeq \pi_1(\Sigma)$ is quasi-Fuchsian, then the associated $3$--manifold $M_G = \HH^3/G$ is homeomorphic to $\Sigma \times (-1,1)$, and $\Omega(G)$ has exactly two simply connected $G$--invariant components $\Omega^\pm$ such that the ``complex structures at infinity'' $\Omega^\pm / G$ are homeomorphic to $\Sigma$. The space of marked groups $G \simeq \pi_1(\Sigma)$ such that $G$ is Fuchsian (up to conjugation) is called \textit{Fuchsian space} $\F(\Sigma)$, while the space of marked groups $G \simeq \pi_1(\Sigma)$ such that $G$ is quasi-Fuchsian (up to conjugation) is called \textit{quasi-Fuchsian space} $\QF(\Sigma)$. The Teichm\"uller space $\T(\Sigma)$ can be identified with the Fuchsian space $\F(\Sigma)$, while Bers' Simultaneous Uniformization Theorem says that $\QF(\Sigma)$ can be parametrized by the pair of complex structures at infinity. 

\subsection{Three manifolds and pleating rays} \label{ssub:manifolds}

Let $M$ be a hyperbolic $3$--manifold. An important subset of $M$ is its \emph{convex core} $\CC_{M} = \CC$, that is the smallest, non-empty, closed, convex subset of $M$ such that the inclusion of $\CC_{M}$ into $M$ is a homotopy equivalence. Thurston proved that, if $M$ is geometrically finite, then there is a natural homeomorphism between the components of $\partial\CC_{M}$ and the components of $\Omega/G$. Thurston proved that each such component $F$ is a (locally convex) embedded pleated surface, that is, it is a hyperbolic surface which is totally geodesic almost everywhere and such that the locus of points where it fails to be totally geodesic is a geodesic lamination, called \textit{bending (or pleated) lamination}. Since the pleated surface is locally convex, the lamination carries a natural transverse measure, called the \textit{bending measure} (or \textit{pleating measure}), see~\cite{eps_con}.

\subsection{Fenchel-Nielsen coordinates for $\F(\Sigma)$ and $\QF(\Sigma)$} \label{sub:fenchel_nielsen_coordinates}

The Fenchel--Nielsen maps $\FN_{\R}$ and $\FN_{\C}$ provide a parametrization of the Fuchsian space $\F(\Sigma)$ and of the quasi-Fuchsian space $\QF(\Sigma)$, respectively. They are defined with respect to a marking decomposition $(\PC, \DD)$ of $\Sigma$ consisting of a pants decomposition $\PC = \{\s_1, \ldots, \s_\xi\}$ and a marking decomposition $\DD = \{D_1, \ldots, D_\xi\}$. 

For the map $\FN_{\R}$, there are two types of coordinates:
\begin{itemize}
  \item the \textit{length parameters} $l_i \in \R_{+}$, which measure the length of the pants curves $\s_i$ in $\HH^2/G$, that is $l_i = l_{G}(\s_i)$ is the length of the curve $\s_i$ in the hyperbolic surface $\HH^2/G$.
  \item the \textit{twist parameters} $t_i \in \R$ which measure the relative positions along $\s_i \in \PC$ in which the pants $P$ and $P'$ (adjacent to $\s_i$ and not necessarily distinct) are glued to form $\HH^2/G$. 
\end{itemize}

The choice of the marking $(\PC, \DD)$ on $\Sigma$ lets us distinguish the effects of the Dehn twists about the pants curves $\s_i$. Fenchel--Nielsen theorem states that the map $$\FN_{\R} \co \F \to \R_{+}^\xi \times \R^\xi$$ defined by $$\FN_{\R}(G) = (\frac{l_1}{2},\ldots, \frac{l_\xi}{2}, t_1,\ldots t_\xi) = (\frac{\ull}{2}, \utt)$$ is a real analytic bijection. We define the length parameters as half-lengths $\frac{l_i}{2}$ to be consistent with the definition of the complex Fenchel-Nielsen coordinates below.

Tan~\cite{tan_com} and Kourouniotis~\cite{kou_com} showed that the Fenchel--Nielsen parametrization can be extended to the quasi-Fuchsian space by `complexifying' the parameters. In particular they replaced the length coordinates $l_i$ with the \textit{complex (translation) length} coordinates $\l_i$ of the element $S_i$ representing the curve $\s_i$, defined by the formula $\Tr (S_i) = 2 \cosh\frac{\l_i}{2}$ and chosen so that $\Re \l_i > 0$. From the periodicity of $\cosh$, we have that $\frac{\l_i}{2} \in \C_+/2\pi i\mathbb{Z},$ where $\C_{+} = \{z \in \C | \Re z > 0\}$.  There is an ambiguity of sign which depends on whether one chooses the half--length as $\frac{\l_i}{2}$ or $\frac{\l_i}{2}+i\pi$, so one needs to specify the (complex) half--lengths $\frac{\l_i}{2}$, rather than the (complex) lengths $\l_i$. The \textit{complex twist parameter} $\t_i$ describes how to glue together two neighbouring pants. With suitable conventions, it is the signed complex distance between the oriented common perpendiculars to lifts of appropriate boundary curves in the two pairs of pants, measured along their oriented common axis.

Tan~\cite{tan_com} and Kourouniotis~\cite{kou_com} proved that the map $$\FN_{\C} \co \QF \to (\C_{+}/2\pi i)^\xi \times (\C/2\pi i)^\xi$$ defined by $$\FN_{\C}(G) = (\frac{\l_1}{2},\ldots, \frac{\l_\xi}{2}, \t_1,\ldots \t_\xi) = (\frac{\ul}{2}, \ut)\in (\C_{+}/2i\pi)^\xi \times (\C/2\pi i)^\xi$$ is a holomorphic embedding. This map, when restricted to $\F(\Sigma)$, coincides with the map $\FN_{\R}$ defined above, see Theorem 1 of Tan~\cite{tan_com}. 

\section{The c--gluing construction} \label{sub:c_glu}

In this section we are going to describe the $\uc$--\textit{gluing construction}. Let $\Sigma = \Sigma_{g, b}$ be an hyperbolizable orientable surface of genus $g$ and with $b$ punctures and let $\xi = \xi(\Sigma) = 3g-3+b$ be its complexity. Given a pants decomposition $\PC = \{\s_1, \ldots, \s_\xi\}$ on $\Sigma$ and a vector $\uc = (c_1, \ldots, c_\xi) \in \R_{+}^\xi$, the  $\uc$--gluing construction defines a complex projective structure on $\Sigma$ so that the holonomies of all the loops $\s_j \in \PC$ are hyperbolic elements of $\PSL(2, \C)$ with translation length $2c_j$. The idea is based on the gluing construction that the author and Series described in \cite{mal_top} by reinterpreting Kra's plumbing construction \cite{kra_hor}. More precisely, first we fix an identification of the interior of each pair of pants $P_i$ to a standard three-holed sphere endowed with the projective structure coming from the unique hyperbolic metric on a three holed sphere with fixed boundary lengths (defined by $\uc$). Then, we glue, or ``plumb'', adjacent pants by deleting open neighbourhoods of the two ends in question and gluing the two pairs of pants along horocyclic annular collars around the two boundary curves. The gluing across the $i$-th pants curve is defined by a parameter $\boldsymbol\mu_j \in \C_{[0,\pi)} = \{z \in \C \mid \mathrm{Im}(z) \in [0, \pi)\}$ for $j = 1, \ldots, \xi$. This defines a complex projective structure with developing map $\Dev_{\uc, \umb}\co \widetilde{\Sigma} \to \mathbb{CP}^1$ and holonomy $\rho_{\uc, \umb}\co \pi_1(\Sigma) \to \PSL(2,\C)$, where $\umb = (\boldsymbol\mu_1, \ldots, \boldsymbol\mu_{\xi})\in (\C_{[0,\pi)})^{\xi}$. We refer to this `new' gluing construction as the $\uc$--\textit{gluing construction}. 

Our construction is similar to the construction described in \cite{kra_hor, mal_top}, but we replaced the thrice-punctured sphere with three-holed sphere. As one might expect, when $\uc$ tends to $\underline{0} = (0, \ldots, 0)$, this generalised construction limits to the gluing construction defined in \cite{mal_top}, as we will prove in Section \ref{subsec:lim}. (The convention of using bold letters will be more clear in that section, when we will take limits as $\uc \to \underline{0}$.) 

\subsection{The standard three-holed sphere} \label{sec:std}

The first step for describing the gluing is to define the structure on the `standard' pairs of pants. Any three holed sphere whose boundary components have length $2c_{1}, 2c_{2}$ and $2c_{3}$, where $c_i\in \R_{\ge 0}$, is isometric to a {\em standard pair of pants} $\mathbf{P}(c_{1}, c_{2}, c_{3})$, which can be defined as $$\mathbf{P}(c_{1}, c_{2}, c_{3}) := \HH^2/\boldsymbol\Gamma(c_{1}, c_{2}, c_{3}),$$ where $\boldsymbol\Gamma(c_{1}, c_{2}, c_{3}) = \langle A_\infty, A_0, A_1 |  A_\infty A_0 A_1 = \Id  \rangle$, and
$$A_\infty= A_\infty(c_{1},c_{2},c_{3}) = \left(
\begin{array}{cc}
\cosh c_{1}  & \cosh c_{1}+1 \\
\cosh c_{1}-1 & \cosh c_{1} \\
\end{array}
\right),$$ 
$$A_0 = A_0(c_{1},c_{2},c_{3}) =  \left(
\begin{array}{cc}
\cosh c_{2} & -\coth(\frac{c_{1}}{2})\tanh(\frac{\nu_1}{2}) \sinh c_{2} \\
-\tanh(\frac{c_{1}}{2})\coth(\frac{\nu_1}{2}) \sinh c_{2} & \cosh c_{2} \\
\end{array}
\right),$$ 
$$A_1= A_1(c_{1},c_{2},c_{3}) =  \left(
\begin{array}{cc}
\cosh c_{3} - \frac{\sinh c_{1} \sinh c_{3}}{\sinh \nu_2} & \frac{\coth(\frac{c_{1}}{2}) \sinh c_{3} (\cosh c_{1} - \cosh \nu_2)}{\sinh \nu_2}  \\
-\frac{\tanh(\frac{c_{1}}{2}) \sinh c_{3} (\cosh c_{1} + \cosh \nu_2)}{\sinh \nu_2} & \cosh c_{3}+ \frac{\sinh c_{1} \sinh c_{3}}{\sinh \nu_2}\\
\end{array}
\right),$$ 
\begin{eqnarray}\label{nu}
  \coth \nu_1 = \frac{\cosh c_{1}\cosh c_{2}+\cosh c_{3}}{\sinh c_{1}\sinh c_{2}},\;\;\coth\nu_2 = \frac{\cosh c_{1}\cosh c_{3}+\cosh c_{2}}{\sinh c_{1}\sinh c_{3}},
\end{eqnarray}
and $\nu_1, \nu_2 > 0.$ In this section we describe the calculations when $c_i > 0$ and the other cases are discussed in Appendix \ref{app:degenerate}.

The fixed points of the elements $A_\infty, A_0$ and $A_1$ are the following:
\begin{itemize}
  \item $\Fix^\pm(A_\infty) = \{\pm \coth(\frac{c_{1}}{2}) \}$;
  \item $\Fix^\pm(A_0) = \{\pm \coth(\frac{c_{1}}{2})\tanh(\frac{\nu_1}{2}) \}$;
  \item $\Fix^\pm(A_1) = \{\coth(\frac{c_{1}}{2})\tanh(\frac{c_{1}-\nu_2}{2}), \coth(\frac{c_{1}}{2})\tanh(\frac{c_{1}+\nu_2}{2}) \}$.
\end{itemize}

These calculations are inspired by Maskit \cite[Section 5.2]{mas_mat}, and by Parker and Parkkonen \cite{par_coo} regarding the choice of the conjugation class. The choice of the normalisation is done so that there exists a limit when $\uc \to \underline{0}$. In fact, in order for the limit to exist, we need to choose well the conjugation class for the subgroup of $\PSL(2,\C)$. In Appendix \ref{app:degenerate} we explain in more details this choice, and the calculations in the case that one or more of the boundary components have length zero (which is necessary to do if $\Sigma$ has punctures). The discussion below can be generalized to these cases as well. 

\begin{figure}
\centering
\includegraphics[height=5cm]{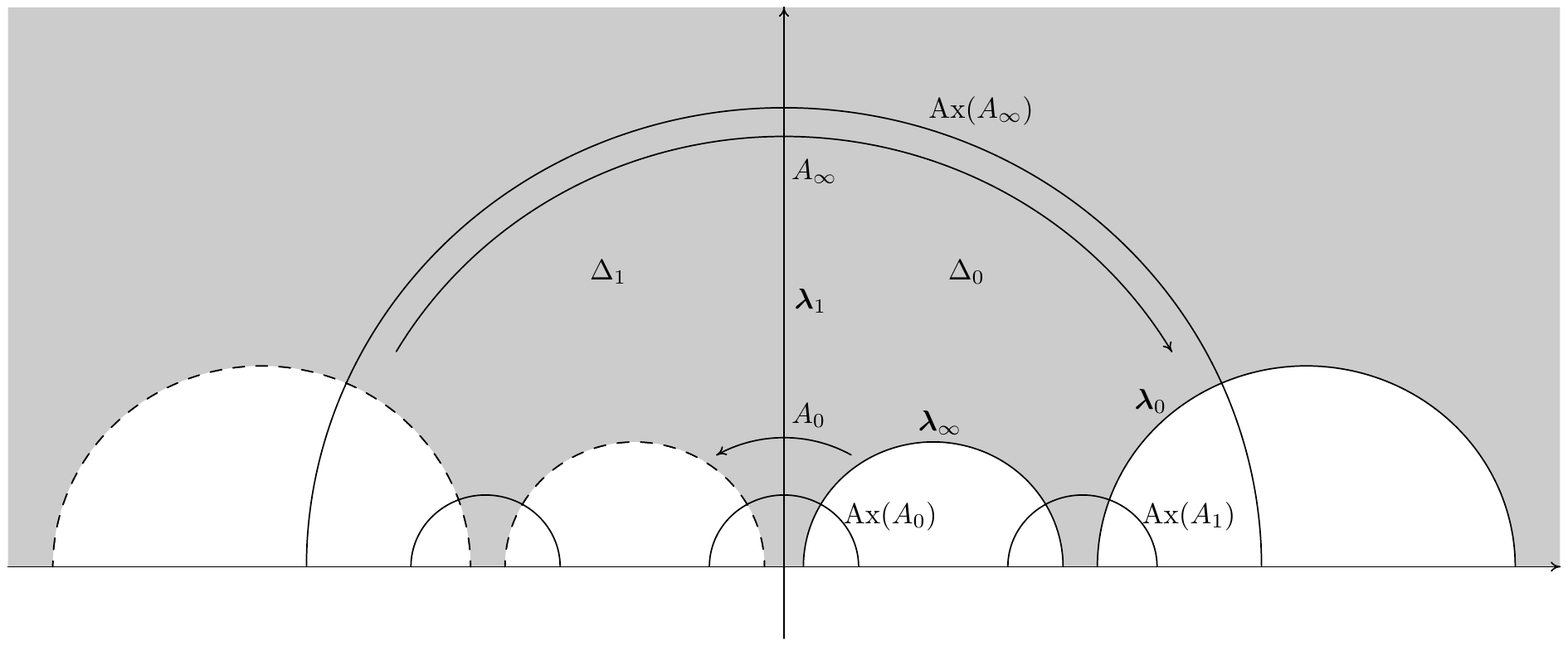}
\caption{The fundamental set ${\boldsymbol\Delta}(c_{1},c_{2},c_{3})$ for $\boldsymbol\Gamma(c_{1},c_{2},c_{3})$, where ${\boldsymbol\lambda}_\e$ is the geodesic between $\Ax (A_{\e+1})$ and $\Ax (A_{\e+2})$.}
\label{fig9}
\end{figure}

Let's fix a standard fundamental set ${\boldsymbol\Delta}$  for the action of $\boldsymbol\Gamma(c_{1}, c_{2}, c_{3})$ on $\HH = \HH^2$:
  $${\boldsymbol\Delta} = {\boldsymbol\Delta}(c_{1},c_{2},c_{3}) = \{z \in \HH^{2} | |z - C_1|\geq r_1, |z - C_2|\geq r_2, |z + C_1| \geq r_1, |z + C_2| \geq r_2 \},$$ 
where:
\begin{itemize}
  \item $C_1 = \frac{\cosh c_{1}}{\cosh c_{1} -1}$, $r_1 = \frac{1}{\cosh c_{1} -1}$;
  \item $C_2 = -\tanh(\frac{c_{1}}{2})\coth(\frac{\nu_1}{2}) \tanh c_{2}$ and $r_2 = \tanh(\frac{c_{1}}{2})\coth(\frac{\nu_1}{2}) \sinh c_{2}$.
\end{itemize}
See Figure \ref{fig9}. The circles with centers $\pm C_1$ and radius $r_1$ are the isometric circles of $A_\infty$ and $A_\infty^{-1}$, while the circles with centers $\pm C_2$ and radius $r_2$ are the isometric circles of $A_0^{-1}$ and $A_0$. Let $$\boldsymbol\pi = \boldsymbol\pi_{c_{1},c_{2},c_{3}}\co \HH \to \mathbf{P}(c_{1},c_{2},c_{3})$$ be the natural quotient map. 

The axes $\Ax(A_\infty)$, $\Ax(A_0)$, $\Ax(A_1)$ and $\Ax(A_0A_\infty)$ projects under $\boldsymbol\pi$ to the three closed boundary geodesics in $\mathbf{P}(c_{1},c_{2},c_{3})$, while the images of the geodesics 
\begin{itemize}
  \item ${\boldsymbol\lambda}_0 = \{z \in \HH^{2} | |z - C_1| = R_1\}$,
  \item ${\boldsymbol\lambda}_\infty =\{z \in \HH^{2} | |z - C_2| = R_2 \}$, 
  \item ${\boldsymbol\lambda}_1 = \{z \in \HH^{2} | \Re z = 0 \}$
\end{itemize}
under $\boldsymbol\pi$ correspond to the seams of $\mathbf{P}(c_{1},c_{2},c_{3})$ which intersect the boundary geodesics orthogonally. These seams split $\mathbf{P}(c_{1},c_{2},c_{3})$ into two (infinite area) `hexagons', which correspond to the subsets ${\boldsymbol\Delta}_0$ and ${\boldsymbol\Delta}_1$ defined by
\begin{itemize}
  \item ${\boldsymbol\Delta}_0 = {\boldsymbol\Delta}_0(c_{i_1}, c_{i_2}, c_{i_3}) = {\boldsymbol\Delta} \cap \HH_{\ge 0}$, where $\HH_{\ge 0} = \{z \in \HH \mid \Re z \geq 0\}$. 
  \item ${\boldsymbol\Delta}_1 = {\boldsymbol\Delta}_1(c_{i_1}, c_{i_2}, c_{i_3}) = {\boldsymbol\Delta} \cap \HH_{\le 0}$, where $\HH_{\le 0} = \{z \in \HH \mid \Re z \leq 0\}$. 
\end{itemize} 
We will refer to ${\boldsymbol\Delta}_0$ and ${\boldsymbol\Delta}_1$ as the \textit{white} and the \textit{black} regions, respectively.

\subsection{The c--gluing} \label{sec:cglu}

Fix $\uc\in \R_{>0}^\xi$. The pants decomposition $\PC$ determines the set $\P = \{P_1, \ldots, P_k\}$ of pairs of pants of $\Sigma = \Sigma_{g, b}$, where $k = -\chi(\Sigma) = 2g-2+b$. Any pair of pants $P_j$ has three boundary components $\s_{i_1}, \s_{i_2}$ and $\s_{i_3}$ which could be pants curves in $\PC$ or punctures of $\Sigma$. If $\s_{i_k} \in \PC$, let $c_{i_k}$ be the positive real number fixed by $\uc$, while if $\s_{i_k}$ corresponds to a puncture of $\Sigma$, let $c_{i_k} = 0$. 

For every $P_j \in \P$ with boundary components $\s_{i_1}, \s_{i_2}$ and $\s_{i_3}$, fix an homeomorphism $${\boldsymbol\Phi}_j \co \mathrm{Int}(P_j) \to \mathbf{P}(c_{i_1}, c_{i_2}, c_{i_3})$$ from the interior of the pair of pants $P_j$ to the standard pair of pants $\mathbf{P} = \mathbf{P}(c_{i_1},c_{i_2},c_{i_3})$. This identifications induce a labelling of the three boundary components of $P_{j}$ as $\partial_0 P_{j}, \partial_1 P_{j}, \partial_{\infty} P_{j}$ in some order, fixed from now on, and a coloring of the two regions whose union is $P_j$, one being `white' and one being `black'. We denote the geodesic boundary curves of $\mathbf{P}(c_{i_1}, c_{i_2}, c_{i_3})$ of length $2c_{i_1}$, $2c_{i_2}$ and $2c_{i_3}$, respectively, as $\dd_\infty\mathbf{P}, \dd_0\mathbf{P}$ and $\dd_1\mathbf{P}$. Suppose that the pairs of pants $P$ and $P'$ in $\P$ are adjacent along the pants curve $\s = \s_{i_1}$ (of length $c_{i_1}$) corresponding to the boundaries $\dd_{\e}P$ and $\dd_{\e'}P'$. (If $P =P'$ then clearly $\e \neq \e'$.) The gluing across $\s$ is described by a complex parameter $\boldsymbol\mu =  {\boldsymbol\mu}_{i_1}\in \C_{[0,\pi)} = \{z \in \C \mid \mathrm{Im}(z) \in [0, \pi)\}$.  

\begin{figure}
\centering 
\includegraphics[height=14cm]{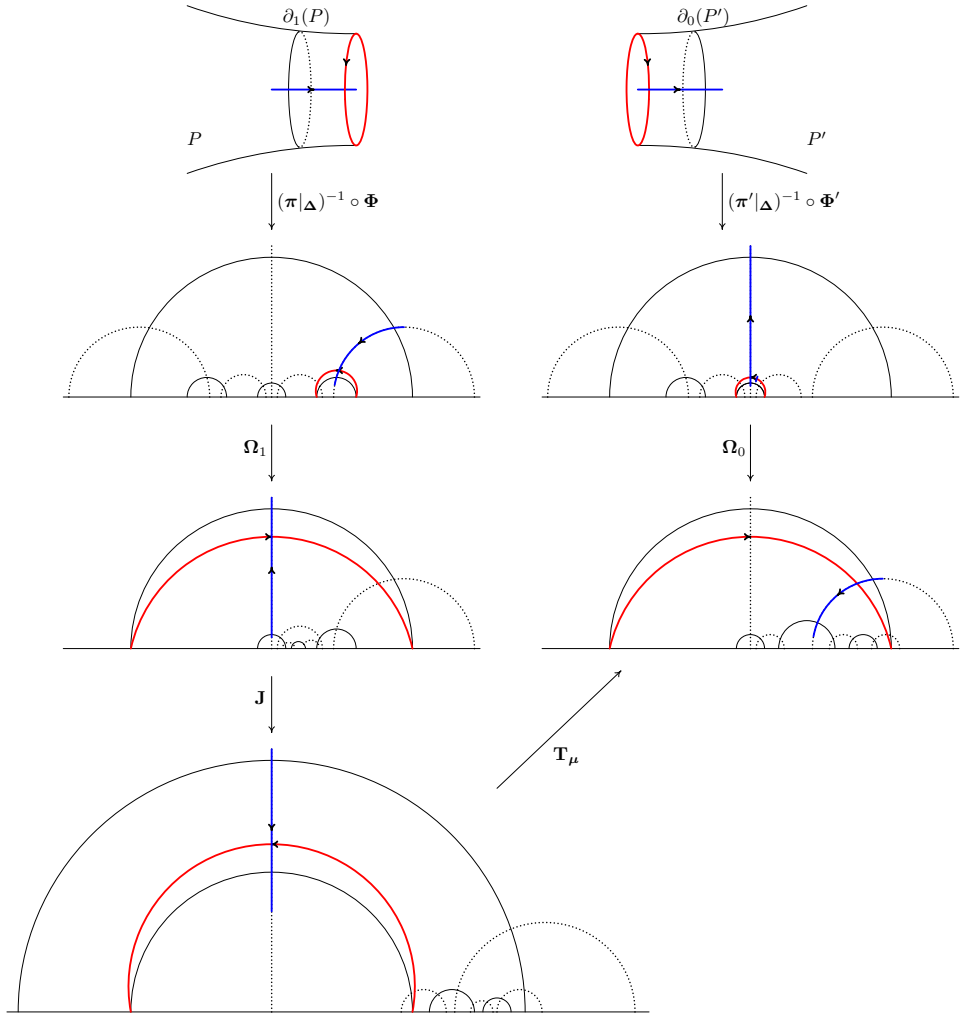} 
\caption{The gluing construction when $\e = 1$ and $\e' = 0$.} 
\label{fig10}
\end{figure}

We first describe the gluing in the case $\e = \e' = \infty$. Let $\mathbf{P} = \mathbf{P}(c_{i_1}, c_{i_2}, c_{i_3})$ and $\mathbf{P}'= \mathbf{P}(c_{i_1},c_{i_4},c_{i_5})$, where $c_{i_j}$ are determined as described above.  We refer to the copy of $\HH$ associated to $\mathbf{P}'$ as $\HH'$. We define the projections $$\boldsymbol\pi \co \HH \to \mathbf{P}(c_{i_1}, c_{i_2}, c_{i_3})\;\;\text{ and }  \;\; \boldsymbol\pi' \co \HH' \to \mathbf{P}'(c_{i_1},c_{i_4},c_{i_5}).$$ Arrange the pairs of pants with $P$ on the left as shown in Figure~\ref{fig10}. (Note that the figure describes the more general case $\e = 1$ and $\e' = 0$.) In Figure~\ref{fig10} the top two arrows corresponds to the maps $$({\boldsymbol\pi}|_{\boldsymbol\Delta})^{-1} \circ \boldsymbol\Phi  \co \mathrm{Int}(P) \to {\boldsymbol\Delta}(c_{i_1},c_{i_2},c_{i_3}) \;\;\;\; \text{and}\;\;\;\;({\boldsymbol\pi'}|_{\boldsymbol\Delta})^{-1} \circ \boldsymbol\Phi' \co \mathrm{Int}(P') \to {\boldsymbol\Delta}(c_{i_1},c_{i_4},c_{i_5}).$$

\vspace{.15 in}

Recall that, given a geodesic $\g$ in $\HH$, an \textit{hypercycle} around $\g$ consists of the points at constant distance from $\g$. For example, if $\g$ is the geodesic between $0$ and $+\infty$, then the hypercycles around $\g$ are defined by $$\{z \in \HH | \Arg(z) = \frac{\pi}{2}-\th\},$$ where $\th \in (-\frac{\pi}{2},\frac{\pi}{2})$ is fixed. On the other hand, if $\g$ is the geodesic between $-r$ and $r$ (with $r \in \R_+$), then the hypercycles around $\g$ are defined by $$\{z \in \HH : |z+ir\tan(\th)| = r\mathrm{sec}(\th)\},$$ where $\th \in (-\frac{\pi}{2},\frac{\pi}{2})$ is fixed.

 \vspace{.15in}

Let $\mathbf{h}_\infty = \mathbf{h}_{\infty}(c_{i_1}, c_{i_2}, c_{i_3})$ be the projection to $\mathbf{P}(c_{i_1}, c_{i_2}, c_{i_3})$ of the `inner' hypercycle $$\mathbf{h}_{\infty, \HH} =  \mathbf{h}_{\infty, \HH}(c_{i_1}, c_{i_2}, c_{i_3}; {\boldsymbol\mu}_{i_1}) = \{z \in \HH : |z+ir\tan(\th)| = r\mathrm{sec}(\th)\}$$ around $\Ax(A_\infty(c_{i_1}, c_{i_2}, c_{i_3}))$, where $\th = \frac{\Im {\boldsymbol\mu}_{i_1}}{2} \in [0,\frac{\pi}{2})$ and $r = \coth(\frac{c_{i_1}}{2})$. The choice $\th = \frac{\Im {\boldsymbol\mu}_{i_1}}{2}$ will be more clear after the discussion in Section \ref{sec:lim}. For $\nu>0$, the region $$\mathbf{H}_{\infty}(c_{i_1},c_{i_2},c_{i_3}; {\boldsymbol\mu}_{i_1}, \nu) = \{z \in \HH\; |\; d_{\HH}(z, \mathbf{h}_{\infty}) < \nu\} \subset \HH$$ projects, under $\boldsymbol\pi$, to an (open) annulus $$\mathbf{A}_\infty = \mathbf{A}_\infty(c_{i_1}, c_{i_2}, c_{i_3}) = \boldsymbol\pi\left(\mathbf{H}_{\infty}(c_{i_1},c_{i_2},c_{i_3}; {\boldsymbol\mu}_{i_1}, \nu)\right)$$ around $\mathbf{h}_{\infty} = \boldsymbol\pi(\mathbf{h}_\infty)$ (and parallel to $\partial_{\infty}\mathbf{P}$). The outer boundary $\partial^+(\mathbf{A}_\infty)$ of $\mathbf{A}_\infty$ bounds a closed infinite area annulus on $\mathbf{P}$. Let $\mathbf{S}$ be the surface obtained by removing this closed infinite area annulus from $\mathbf{P}$. Note that $\mathbf{S}$ is open. Define $\mathbf{h}'_{\infty}(c_{i_1}, c_{i_4}, c_{i_5})$, $\mathbf{h}'_{\infty, \HH}(c_{i_1}, c_{i_4}, c_{i_5})$, $\mathbf{H}'_{\infty}(c_{i_1}, c_{i_4}, c_{i_5})$, $\mathbf{A}'_\infty = \mathbf{A}'_\infty(c_{i_1}, c_{i_4}, c_{i_5})$ and $\mathbf{S}'$ in a similar way. 

We are going to glue the truncated surfaces $\mathbf{S}$ to $\mathbf{S}'$ by matching $\mathbf{A}_\infty$ to $\mathbf{A}'_\infty$ in such a way that $\mathbf{h}_{\infty}$ is identified to $\mathbf{h}'_{\infty}$,  $\partial^+(\mathbf{A}_\infty)$ is identified with $\partial^-(\mathbf{A}'_\infty)$ and $\partial^-(\mathbf{A}_\infty)$ is identified with $\partial^+(\mathbf{A}'_\infty)$, see Figures~\ref{fig10}. The resulting homotopy class of the loop $\mathbf{h}_{\infty}$ on the glued up surface (the quotient of the disjoint union of the surfaces $\mathbf{S}$ and $\mathbf{S}'$ by the gluing map mentioned above) will be in the homotopy class of $\s$. To keep track of the marking on $\Sigma$, we do the gluing at the level of the $\Z$--covers of $\mathbf{A}_\infty$ and $\mathbf{A}'_\infty$ corresponding to $\mathbf{H}_{\infty}$ and $\mathbf{H}'_{\infty}$, that is, we actually glue the strips $\mathbf{H}_{\infty}$ and $\mathbf{H}'_{\infty}$. We will explain the details of this construction in Sections \ref{sec:projstructure} and \ref{sec:mark}.

As shown in Figure~\ref{fig10}, we first need to reverse the direction in one of the two strips $\mathbf{H}_{\infty}, \mathbf{H}'_{\infty}$. Set
\begin{equation}\label{eqn:standardsymmetries1_bold}
\begin{aligned}
\mathbf{J} &= \mathbf{J}(c_{i_1}, c_{i_2}, c_{i_3}) = \begin{pmatrix}
	0  &  -\coth(\frac{c_{i_1}}{2})  \\
	\tanh(\frac{c_{i_1}}{2}) &  0   
	\end{pmatrix} \text{ and }\\ 
\mathbf{T}_{\boldsymbol\mu}&= \mathbf{T}_{\boldsymbol\mu}(c_{i_1}, c_{i_2}, c_{i_3}) = \begin{pmatrix}
\cosh \frac{\boldsymbol\mu}{2} & -\sinh\frac{\boldsymbol\mu}{2}\coth \frac{c_{i_1}}{2} \\
-\sinh\frac{\boldsymbol\mu}{2}\tanh \frac{c_{i_1}}{2} & \cosh\frac{\boldsymbol\mu}{2} \notag
\end{pmatrix}.
\end{aligned}
\end{equation}
We reverse the direction in $\mathbf{H}_{\infty}$ by applying the map $\mathbf{J}(z) = -\coth^2(\frac{c_{i_1}}{2}) \frac{1}{z}$, which corresponds to the rotation of angle $\pi$ around the point $\left(\coth \frac{c_{i_1}}{2}\right)i$. We then glue $\mathbf{H}_{\infty}$ to $\mathbf{H}'_{\infty}$ by identifying $z \in \mathbf{H}_{\infty}$ to $z' = \mathbf{T}_{\boldsymbol\mu} \mathbf{J}(z) \in \mathbf{H}'_{\infty}$. This identification descends to a well defined identification of $\mathbf{A}_\infty$ with $\mathbf{A}'_\infty$, in which the `outer' boundary of $\mathbf{A}_\infty$ is identified to the `inner' boundary of $\mathbf{A}'_\infty$. In particular, applying $\mathbf{T}_{\boldsymbol\mu} \mathbf{J}$, we glue $\mathbf{h}_{\infty}$ to $\mathbf{h}'_{\infty}$ reversing the orientation in $\HH^2$ (but not in $\HH^3$), as we wanted. Note that the map $\mathbf{T}_{\boldsymbol\mu} \mathbf{J}$ coincides with the map $U_{\boldsymbol\mu}^{-1}$, where $U = U_{\boldsymbol\mu}$ is defined in Section \ref{ssub:four}. Looking at the action of $\mathbf{T}_{\boldsymbol\mu} \mathbf{J}$ on $\HH \subset \hat\C$, we can see that this map sends the hypercycle $\mathbf{h}_{\infty, \HH}$ to itself. 

Now we discuss the general case in which $P$ and $P'$ meet along the boundary components $\partial_\e(P)$ and $\partial_{\e'}(P')$, where $\e, \e' \in \{0,1,\infty\}$. As above, let ${\boldsymbol\Delta}_0 = {\boldsymbol\Delta}_0(c_{i_1}, c_{i_2}, c_{i_3}) \subset \HH_{+}$ be the white `hexagon' of ${\boldsymbol\Delta}(c_{i_1}, c_{i_2}, c_{i_3})$. Notice that there is a unique orientation preserving map ${\boldsymbol\Omega}_{0} = {\boldsymbol\Omega}_{0}(c_{i_1}, c_{i_2}, c_{i_3}) \in \PSL(2, \C)$ such that
$${\boldsymbol\Omega}_{0} \left({\boldsymbol\Delta}_0(c_{i_3}, c_{i_1}, c_{i_2})\right) = {\boldsymbol\Delta}_0(c_{i_1}, c_{i_2}, c_{i_3})$$ and such that $\Ax\left(A_0(c_{i_3}, c_{i_1}, c_{i_2})\right) \subset {\boldsymbol\Delta}(c_{i_3}, c_{i_1}, c_{i_2})$ is mapped to $\Ax\left(A_\infty(c_{i_1}, c_{i_2}, c_{i_3})\right) \subset {\boldsymbol\Delta}(c_{i_1}, c_{i_2}, c_{i_3})$. Similarly, let ${\boldsymbol\Omega}_{1} = {\boldsymbol\Omega}_{1}(c_{i_1}, c_{i_2}, c_{i_3}) \in \PSL(2,\C)$ be the unique orientation preserving transformation such that 
 $${\boldsymbol\Omega}_{1}\left({\boldsymbol\Delta}_0(c_{i_2}, c_{i_3}, c_{i_1}) \right) = {\boldsymbol\Delta}_0(c_{i_1}, c_{i_2}, c_{i_3})$$ and such that $\Ax\left(A_1(c_{i_2}, c_{i_3}, c_{i_1})\right)$ in ${\boldsymbol\Delta}(c_{i_2}, c_{i_3}, c_{i_1})$ is mapped to $\Ax\left(A_\infty(c_{i_1}, c_{i_2}, c_{i_3})\right)$ in ${\boldsymbol\Delta}(c_{i_1}, c_{i_2}, c_{i_3})$. To do the gluing, first move $\partial_\e\mathbf{P}(c_{i_3}, c_{i_1}, c_{i_2})$  to $\partial_\infty\mathbf{P}(c_{i_1}, c_{i_2}, c_{i_3})$ and $\partial_{\e'}\mathbf{P}'(c_{i_5}, c_{i_1}, c_{i_4})$ to $\partial_\infty\mathbf{P}'(c_{i_1}, c_{i_4}, c_{i_5})$ using the maps $\boldsymbol\Omega_{\e}$ and $\boldsymbol\Omega_{\e'}$, respectively, and then proceed as before. Let 
\begin{equation}
\begin{aligned}
  \mathbf{h}_{0, \HH} &= \mathbf{h}_{0, \HH}(c_{i_3}, c_{i_1}, c_{i_2}; {\boldsymbol\mu}_{i_1}) = \boldsymbol\Omega_{0}^{-1}(\mathbf{h}_{\infty, \HH}),\\
  \mathbf{H}_0 &= \mathbf{H}_0(c_{i_3}, c_{i_1}, c_{i_2};{\boldsymbol\mu}_{i_1}, \nu) = \boldsymbol\Omega_{0}^{-1}(\mathbf{H}_\infty),\\
  \mathbf{h}_{1, \HH} &= \mathbf{h}_{1, \HH}(c_{i_2}, c_{i_3}, c_{i_1}; {\boldsymbol\mu}_{i_1}) = \boldsymbol\Omega_{1}^{-1}(\mathbf{h}_{\infty, \HH}),\\
  \mathbf{H}_1 &= \mathbf{H}_1(c_{i_2}, c_{i_3}, c_{i_1}; {\boldsymbol\mu}_{i_1}, \nu) = \boldsymbol\Omega_{1}^{-1}(\mathbf{H}_\infty),
\end{aligned}
\end{equation}
where $\mathbf{h}_{\infty, \HH} = \mathbf{h}_{\infty, \HH}(c_{i_1}, c_{i_2}, c_{i_3}; {\boldsymbol\mu}_{i_1})$ and $\mathbf{H}_\infty = \mathbf{H}_\infty(c_{i_1}, c_{i_2}, c_{i_3}; {\boldsymbol\mu}_{i_1}, \nu)$. Let also $\mathbf{h}_\e$ and $\mathbf{A}_\e$ be the projections (under $\boldsymbol\pi$) to $\mathbf{P}(c_{i_1}, c_{i_2}, c_{i_3})$ of the set $\mathbf{h}_{\e, \HH}$ and $\mathbf{H}_\e$, respectively. Thus the gluing identifies $z \in \mathbf{H}_\e$ to $z' \in \mathbf{H}_{\e'}$ by the formula 
\begin{equation}\label{eqn:gluing_bold} 
  \boldsymbol\Omega_{\e'}(z')= \mathbf{T}_{\boldsymbol\mu} \circ \mathbf{J} \left(\boldsymbol\Omega_{\e}(z)\right),
\end{equation}
see Figure~\ref{fig10}.

Finally, we carry out the above construction for each pants curve $\s_i \in \PC$ using gluing parameters $\umb = (\boldsymbol\mu_1, \ldots, \boldsymbol\mu_\xi) \in (\C_{[0,\pi)})^\xi$. To do this, we need to ensure that the annuli $\mathbf{H}_0$, $\mathbf{H}_1$ and $\mathbf{H}_\infty$ corresponding to the three different boundary components of a given pair of pants $P_j$ are disjoint. (Note that this is similar to the gluing construction of \cite{mal_top} where we needed to ask that the three horocycles were disjoint.)  Under this condition we can choose $\nu>0$ so that $\mathbf{H}_0$, $\mathbf{H}_1$ and $\mathbf{H}_\infty$ are disjoint in $\boldsymbol\Delta(c_{i_1}, c_{i_2}, c_{i_3})$, as required. We define $$\mathbf{S}(\uc, \umb) := \mathbf{S}_1 \sqcup \ldots \sqcup \mathbf{S}_k/\sim$$ to be the quotient of the disjoint union of the truncated surfaces $\mathbf{S}_j \subset \mathbf{P}(c_{i_1}, c_{i_2}, c_{i_3})$ defined above by the equivalence relation $\sim$ given by the attaching map along the annuli $\mathbf{A}_\e(\s_i)$ around each pants curve $\s_i$. Note that $\mathbf{S}(\uc, \umb)$ is homeomorphic to $\Sigma$.

As explained in details in the next section, this process defines a complex projective structure $\Sigma(\uc, \umb)$ on $\mathbf{S}(\uc, \umb)\cong \Sigma$.

\subsection{Projective structure}\label{sec:projstructure}

The $\uc$--gluing construction described in the previous section defines a marked complex projective structure on $\mathbf{S}(\uc, \umb)$ (and hence on $\Sigma$, since $\mathbf{S}(\uc, \umb)$ is homeomorphic to $\Sigma$). We describe the projective structure in this section and we discuss the marking in Section \ref{sec:mark}. The idea is the following. First, we define a complex projective structure on each truncated surface $\mathbf{S}_j = \mathbf{S}_j(\uc, \umb) \subset \mathbb P_j = {\boldsymbol\Phi}_j \left(\mathrm{Int}(P_j)\right)$, where $j = 1, \ldots, k$, and then, we describe why the attaching maps allow us to define a complex projective structure on the quotient $\mathbf{S}(\uc, \umb) = \mathbf{S}_1 \sqcup \ldots \sqcup \mathbf{S}_k/\sim$.

We recall some basic facts about complex projective structures that we will need later. See, for example, Dumas \cite{dum_com} for more details.
\begin{enumerate}
  \item Let $\Sigma' \subset \Sigma$ be an open subset and let $Z$ be a complex projective structure on $\Sigma$, then the restriction of $Z$ to $\Sigma'$ defines a complex projective structure on $\Sigma'$.
  \item A Fuchsian group $\Gamma \subset \PSL(2, \R)$ defines a projective structure on the quotient surface $\HH/\Gamma$.
\end{enumerate} 

These facts explain how to define a complex projective structure on each $\mathbf{S}_j \subset \mathbb P_j = \HH/\Gamma_j$ for $j = 1, \ldots, k$, where $\Gamma_j = \Gamma(c_{i_1}, c_{i_2}, c_{i_3})$. If $\s_i = \partial_\e \mathbf{P}_j \cap \partial_{\e'} \mathbf{P}_{j'}$, we discuss the gluing of $\mathbf S = \mathbf S_{j}$ and $\mathbf S' = \mathbf S_{j'}$ along the annuli $\mathbf{A} = \mathbf{A}_\e(\s_i) \subset \mathbf S$ and $\mathbf{A}' = \mathbf{A}_{\e'}(\s_i) \subset \mathbf S'$, that is we describe the complex projective structure on $\mathbf S \sqcup \mathbf S' / \sim$, where the equivalence relation $\sim$ is given by the attaching maps along the annuli $\mathbf{A}$ and $\mathbf{A}'$. 

Recall that there are strips 
$$\mathbf{H} = \mathbf{H}_\e = \boldsymbol\Omega_\e^{-1}(\mathbf{H}_\infty) \;\;\text{and}\;\;\mathbf{H}' = \mathbf{H}_{\e'} = \boldsymbol\Omega_{\e'}^{-1}(\mathbf{H}_\infty)$$ 
in $\HH \subset \hat\C$ such that $$\boldsymbol\pi(\mathbf{H}) = \mathbf{A} \subset \mathbf{S}\;\;\;\text{and}\;\;\;\boldsymbol\pi(\mathbf{H}') = \mathbf{A}' \subset \mathbf{S}',$$ where $$\boldsymbol\pi \co \HH \to \mathbb P_j = \mathbb P(c_{i_1}, c_{i_2}, c_{i_3}) = \HH/\Gamma(c_{i_1}, c_{i_2}, c_{i_3})$$ and $$\boldsymbol\pi' \co \HH \to \mathbb P'_j = \mathbb P'(c_{i_1}, c_{i_4}, c_{i_5}) = \HH/\Gamma(c_{i_1}, c_{i_4}, c_{i_5}).$$ So $\boldsymbol\pi \sqcup \boldsymbol\pi' \co \mathbf{H} \sqcup \mathbf{H}'  \to \mathbf{A} \sqcup \mathbf{A}'$. Let $$\boldsymbol\pi_{\mathbf{H}} \co \mathbf{H} \sqcup \mathbf{H}' \to \mathbf{H} \sqcup \mathbf{H}'/\sim\;\;\;\text{and}\;\;\;\boldsymbol\pi_{\mathbf{A}} \co \mathbf{A} \sqcup \mathbf{A}' \to \mathbf{A} \sqcup \mathbf{A}'/\sim.$$ With abuse of notation let's denote $$\boldsymbol\pi \sqcup \boldsymbol\pi' \co (\mathbf{H} \sqcup \mathbf{H}' /\sim)  \to (\mathbf{A} \sqcup \mathbf{A}'/\sim).$$ We can see that $\boldsymbol\pi_{\mathbf{A}} \circ (\boldsymbol\pi \sqcup \boldsymbol\pi') = (\boldsymbol\pi \sqcup \boldsymbol\pi') \circ \boldsymbol\pi_{\mathbf{H}}.$

Note that $V \subset \mathbf{H} \sqcup \mathbf{H}'/\sim$ is open if and only if $\boldsymbol\pi_{\mathbf{H}}^{-1}(V) \subset \mathbf{H} \sqcup \mathbf{H}'$ is open. Note also that $\boldsymbol\pi_{\mathbf{H}}^{-1}(V) = V_1 \sqcup V_2$, where $V_1 = \boldsymbol\pi_{\mathbf{H}}^{-1}(V) \cap \mathbf{H}$ and $V_2 = \boldsymbol\pi_{\mathbf{H}}^{-1}(V) \cap \mathbf{H}'$. Using the first fact above, we can see that there are natural complex projective structures on $\mathbf{H}$ and $\mathbf{H}'$, respectively, where the charts are the inclusion maps $i$. We define a complex projective structure on $\mathbf{H} \sqcup \mathbf{H}'/\sim$ as follows. Let $\cal V = \{V_i\}$ be a covering of $\mathbf{H} \sqcup \mathbf{H}'/\sim$, we define two sets of charts on $\cal V$: the charts $\psi^1 = \psi^1_V  \co V \to \mathbf{H} \subset  \hat\C$ are defined by $V \stackbin[]{\boldsymbol\pi_{\mathbf{H}}^{-1}}{\longmapsto}  V_1 \stackbin[]{i}{\mapsto} \mathbf{H}$ and the charts $\psi^2 = \psi^2_V \co V \to \mathbf{H}' \subset \hat\C$ defined by $V \stackbin[]{\boldsymbol\pi_{\mathbf{H}}^{-1}}{\longmapsto}  V_2 \stackbin[]{i}{\mapsto} \mathbf{H}',$ for every  $V \in \cal V$. The transition maps are given by $$\psi^1 \circ (\psi^2)^{-1} = \boldsymbol\Omega_{\epsilon}^{-1}\mathbf{J}^{-1}\mathbf{T}_{\boldsymbol\mu_i}^{-1}\boldsymbol\Omega_{\epsilon'} \in \PSL(2, \C)$$ for every $V \in \cal V$. This defines a complex projective structure on $\mathbf{H} \sqcup \mathbf{H}'/\sim$ which descends to a complex projective structure on $\mathbf{A} \sqcup \mathbf{A}'/\sim$ (see the second fact above). This defines a complex projective structure on $\mathbf{S} \sqcup \mathbf{S}' / \sim$.

Carrying out the above construction for each pants curve $\s_i \in \PC$ and considering the unique maximal atlas in the equivalence class of this atlas, we can define a complex projective structure on the quotient $(\mathbf{S}_1 \sqcup \ldots \sqcup \mathbf{S}_k)/\sim$, which we denote by $\Sigma(\uc, \umb)$. As described in Section \ref{sub:complex}, we can consider the developing map $${\rm Dev}_{\uc, \umb}\co\widetilde{\Sigma} \to \hat{\C}$$ and the holonomy representation $$\rho_{\uc, \umb}\co \pi_1(\Sigma) \to \PSL(2, \C)$$ associated to this complex projective structure. Both these maps are well defined up to the action of $\PSL(2,\C)$ and ${\rm Dev}_{\uc, \umb}$ is equivariant with respect to $\rho_{\uc, \umb}$.

As a consequence of the construction, we note the following fact.

\begin{Lemma}\label{hyper}
  If $\g \in \pi_1(\Sigma)$ is a loop homotopic to a pants curve $\s_i \in \PC$, then $\rho_{\uc, \umb}(\g)$ is hyperbolic with translation length $2c_i$.
\end{Lemma}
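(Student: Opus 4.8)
The plan is to trace a loop homotopic to $\s_i$ through the projective charts of $\Sigma(\uc,\umb)$ and compute the resulting holonomy as an explicit product of the matrices appearing in the construction. First I would reduce to the model case $\e = \e' = \infty$: by the maps $\boldsymbol\Omega_\e$ and $\boldsymbol\Omega_{\e'}$, which carry the relevant boundary components to $\partial_\infty$, it suffices to understand the holonomy of the core curve $\mathbf{h}_\infty$ of the gluing annulus $\mathbf{A}_\infty$ when two standard pants $\mathbf P(c_{i_1},c_{i_2},c_{i_3})$ and $\mathbf P'(c_{i_1},c_{i_4},c_{i_5})$ are glued along $\mathbf H_\infty$ and $\mathbf H'_\infty$ via $z \mapsto \mathbf T_{\boldsymbol\mu}\mathbf J(z)$. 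A loop freely homotopic to $\s_i$ can be realized as a path that starts in the interior of $\mathbf S$, crosses into $\mathbf S'$ through the glued annulus, travels around, and returns — but since $\mathbf S'$ deformation retracts onto a neighbourhood of its core, the relevant closed loop is actually the core of the annulus $\mathbf A_\infty \cong \mathbf A'_\infty$, whose holonomy in the pants $\mathbf P$ (before gluing) is generated by $A_\infty(c_{i_1},c_{i_2},c_{i_3})$.

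Next I would compute the holonomy explicitly. Going around the core curve using the charts described in Section~\ref{sec:projstructure}, the holonomy is the product of the transition map $\psi^1\circ(\psi^2)^{-1} = \boldsymbol\Omega_\e^{-1}\mathbf J^{-1}\mathbf T_{\boldsymbol\mu}^{-1}\boldsymbol\Omega_{\e'}$ for crossing from $\mathbf S'$ to $\mathbf S$, composed with the deck transformation of $\mathbf P' = \HH'/\boldsymbol\Gamma'$ corresponding to going once around $\partial_\infty \mathbf P'$ (namely $A_\infty' = A_\infty(c_{i_1},c_{i_4},c_{i_5})$, possibly pulled back by $\boldsymbol\Omega_{\e'}$), and then crossing back. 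Concretely, for the model case the holonomy of $\s_i$ should come out to be (conjugate to) $\mathbf T_{\boldsymbol\mu}\mathbf J\, A_\infty'\, (\mathbf T_{\boldsymbol\mu}\mathbf J)^{-1}$ or, after using that $\mathbf T_{\boldsymbol\mu}\mathbf J$ fixes $\Ax(A_\infty)$ setwise (noted at the end of Section~\ref{sec:cglu}), simply conjugate to $A_\infty(c_{i_1},c_{i_2},c_{i_3})$ itself. The key computation is that $\Tr A_\infty(c_{i_1},c_{i_2},c_{i_3}) = 2\cosh c_{i_1}$, read directly off the matrix $A_\infty$ (its trace is $2\cosh c_{i_1}$), which by the formula $\Tr = 2\cosh(\l/2)$ identifies the complex translation length as $\l = 2c_{i_1}$; since $c_{i_1} \in \R_{>0}$, the element is hyperbolic (loxodromic with real trace, no rotational part) with translation length $2c_{i_1}$. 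Because $\rho_{\uc,\umb}$ is only defined up to conjugacy, and conjugation preserves trace, this suffices.

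The two things requiring care are: (1) checking that the gluing map $\mathbf T_{\boldsymbol\mu}\mathbf J$ really preserves $\Ax(A_\infty)$ — or at least commutes with $A_\infty$ up to the relevant conjugacy — so that the holonomy of $\s_i$ is conjugate to a single matrix $A_\infty$ rather than a genuinely mixed word; this should follow from the observation (stated in the text) that $\mathbf T_{\boldsymbol\mu}\mathbf J = U_{\boldsymbol\mu}^{-1}$ sends $\mathbf h_{\infty,\HH}$ to itself, together with the explicit fixed-point computation $\Fix^\pm(A_\infty) = \{\pm\coth(c_{i_1}/2)\}$, since a Möbius map preserving the hypercycle around $\Ax(A_\infty)$ preserves $\Ax(A_\infty)$. (2) Handling the self-gluing case $P = P'$: here the loop genuinely crosses the annulus and the holonomy is $\boldsymbol\Omega_\e^{-1}\mathbf J^{-1}\mathbf T_{\boldsymbol\mu}^{-1}\boldsymbol\Omega_{\e'}$ composed with an internal deck transformation, but the same trace argument applies since the net effect on $\Ax(A_\infty)$ is the same. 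I would also invoke Appendix~\ref{app:degenerate} to dismiss the case where $\s_i$ is adjacent to degenerate (length-zero) boundary components — but that does not arise, since $\s_i \in \PC$ forces $c_{i_1} > 0$. The main obstacle is bookkeeping: writing down the holonomy word correctly in terms of the charts $\psi^1,\psi^2$ and the deck group, keeping track of which $\boldsymbol\Omega_\e$ appears where and in which order, and confirming that all the conjugating factors genuinely cancel or are absorbed into the (harmless) conjugacy ambiguity of $\rho_{\uc,\umb}$.
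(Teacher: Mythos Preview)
The paper gives no proof of this lemma at all: it is stated as an immediate ``consequence of the construction,'' and indeed it is. Your argument eventually reaches the right conclusion (that the holonomy is conjugate to $A_\infty$, with trace $2\cosh c_i$), but the route you take is more involved than needed and slightly misdirected.

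The point you are missing is that a loop freely homotopic to $\s_i$ can be chosen to lie \emph{entirely inside a single truncated pair of pants} $\mathbf S_j$: the core curve $\mathbf h_\e$ of the gluing annulus sits in $\mathbf S_j$ (and also in the neighbouring $\mathbf S_{j'}$, since it lives in the overlap). Its holonomy is therefore computed purely in the Fuchsian chart $\HH/\boldsymbol\Gamma(c_{i_1},c_{i_2},c_{i_3})$, with no transition maps or gluing maps $\mathbf T_{\boldsymbol\mu}\mathbf J$ entering at all. In that chart the boundary curve $\partial_\e\mathbf P$ is represented by $A_\e$, which is conjugate to $A_\infty$ and has trace $2\cosh c_i$; done.

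Your discussion of ``crossing into $\mathbf S'$,'' checking that $\mathbf T_{\boldsymbol\mu}\mathbf J$ preserves $\Ax(A_\infty)$, and the special self-gluing case $P=P'$ is the computation one would do for the \emph{dual} curve $D_i$ (which genuinely crosses the annulus and picks up a $\boldsymbol\Omega_\e^{-1}\mathbf J^{-1}\mathbf T_{\boldsymbol\mu}^{-1}\boldsymbol\Omega_{\e'}$ factor), not for $\s_i$. For $\s_i$ none of that bookkeeping is needed, which is why the paper omits a proof.
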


\subsection{The marking} \label{sec:mark}

To complete the description of the marked complex projective structure, we have to specify a marking on $\Sigma(\uc, \umb)$, that is, a homeomorphism $f_{\uc, \umb}\co \Sigma \to \Sigma (\uc, \umb)$ from a fixed topological surface $\Sigma$ to the surface $\Sigma(\uc, \umb)$. Endow $\Sigma$ with a marking decomposition, that is, a pants decomposition $\PC = \{\s_1, \ldots, \s_\xi\}$ and a set of dual curves $\{D_1, \ldots, D_\xi\}$ as described in Section \ref{sec:marking}. 

We first describe this marking for the particular case $\Sigma(\uc, \umb^0)$, where $\umb^0 = (\boldsymbol\mu_1^0, \ldots, \boldsymbol\mu_\xi^0)$ is defined by $\Re\boldsymbol\mu_i^0 = -c_i$, for all $i = 1, \ldots, \xi$, and then we see how to deal with all the other cases. The imaginary part of $\boldsymbol\mu_i^0$ is not important. (For definiteness, you can fix it to be $\Im\boldsymbol\mu_i^0 = 0$.) In particular, we describe a marking decomposition for $\Sigma(\uc, \umb^0)$ in such a way that we can define the homeomorphism $f_{\uc, \umb^0}\co \Sigma \to \Sigma(\uc, \umb^0)$ by asking the pants curves $\s_i$ and the dual curves $D_i$ to be mapped to the `corresponding' curves in $\Sigma(\uc, \umb^0)$. Let ${\boldsymbol\lambda}_{\e}(c_{i_1}, c_{i_2}, c_{i_3}) \subset {\boldsymbol\Delta}_0(c_{i_1}, c_{i_2}, c_{i_3})$ be the unique common perpendicular of the geodesics $\Ax\left(A_{\e +1}(c_{i_1}, c_{i_2}, c_{i_3})\right)$ and $\Ax\left(A_{\e +2}(c_{i_1}, c_{i_2}, c_{i_3})\right)$, where $\e$ is in the cyclically ordered set $\{0,1,\infty\}$, see Figure~\ref{fig9}. The lines ${\boldsymbol\lambda}_{\e}(c_{i_1}, c_{i_2}, c_{i_3})$ project to the seams of $\mathbf{P} = \mathbf{P}(c_{i_1}, c_{i_2}, c_{i_3})$. We call the geodesics ${\boldsymbol\lambda}_0$ (from $\Ax(A_{1})$ to $\Ax(A_{\infty})$) and ${\boldsymbol\lambda}_1$  (from $\Ax(A_{\infty})$ to $\Ax(A_{0})$) the \emph{incoming} and the \emph{outgoing} geodesic, respectively, and refer to their images under the maps ${\boldsymbol\Omega}_{\e}$ in a similar way. For $\boldsymbol\mu \in \C/2i\pi$, let $$X_{\infty} (\boldsymbol\mu)\;\;\;\text{and}\;\;\;Y_{\infty}(\boldsymbol\mu)$$ be the points at which the incoming line ${\boldsymbol\lambda}_0$ and the outgoing line ${\boldsymbol\lambda}_1$ meet the hypercycle $\mathbf{h}_{\infty, \HH}(c_{i_1}, c_{i_2}, c_{i_3})$ in $\HH$. Also define $$X_{\e}(\boldsymbol\mu)=\boldsymbol\Omega_{\e}^{-1}\left(X_{\infty}(\boldsymbol\mu)\right)\;\;\;\text{and}\;\;\;Y_{\e}(\boldsymbol\mu)=\boldsymbol\Omega_{\e}^{-1}\left(Y_{\infty}(\boldsymbol\mu)\right).$$ Note that $X_0(\boldsymbol\mu), Y_0(\boldsymbol\mu) \in \mathbf{h}_0(c_{i_3}, c_{i_1}, c_{i_2})$ and $X_1(\boldsymbol\mu), Y_1(\boldsymbol\mu) \in \mathbf{h}_1(c_{i_2}, c_{i_3}, c_{i_1})$. 

Now pick a pants curve $\s$ and let  $P,P' \in \P$ be its adjacent pants in $\Sigma$ to be glued across boundaries $\dd_{\e}P$ and $\dd_{\e'}P'$.  Let 
$$X_{\infty}(P, \boldsymbol\mu)\;\;\;\text{and}\;\;\;X_{\infty}(P', \boldsymbol\mu)$$ be the points in $\mathbf{P} = \mathbf{P}(c_{i_1}, c_{i_2}, c_{i_3})$ corresponding to $X_{\infty}(\boldsymbol\mu)$ under the identifications $(\boldsymbol\pi|_{{\boldsymbol\Delta}})^{-1}$ and $(\boldsymbol\pi'|_{{\boldsymbol\Delta}'})^{-1}$ of ${\boldsymbol\Delta}$ and  ${\boldsymbol\Delta}'$ with $\mathbf{P}$. Define similarly the points:
\begin{itemize}
  \item[--] $Y_{\infty}(P, \boldsymbol\mu), Y_{\infty}(P', \boldsymbol\mu) \in \mathbf{P}(c_{i_1}, c_{i_2}, c_{i_3}),$
  \item[--] $X_{0}(P, \boldsymbol\mu), X_{0}(P', \boldsymbol\mu), Y_{0}(P, \boldsymbol\mu), Y_{0}(P', \boldsymbol\mu) \in \mathbf{P}(c_{i_3}, c_{i_1}, c_{i_2})$, and
  \item[--] $X_{1}(P, \boldsymbol\mu), X_{1}(P', \boldsymbol\mu), Y_{1}(P, \boldsymbol\mu), Y_{1}(P', \boldsymbol\mu) \in \mathbf{P}(c_{i_2}, c_{i_3}, c_{i_1}).$
\end{itemize}

The base structure $\Sigma(\uc, \underline{\boldsymbol\mu}^0)$ is the one in which the identification~\eqref{eqn:gluing_bold} matches the point $X_{\e}(P,\boldsymbol\mu)$ on the incoming line across $\dd_{\e}\mathbf{P}$ to the point $Y_{\e'}(P')$ on the outgoing line to $\dd_{\e'} \mathbf{P}'$. Referring to the gluing equation~\eqref{eqn:gluing_bold} and to Figure \ref{fig10}, we see that this condition is fulfilled precisely when $\Re\boldsymbol\mu_i = -c_i$. We define the structure on $\Sigma$ by specifying $\Re\boldsymbol\mu_i = -c_i$ for $i = 1,\ldots,\xi$. This explains our choice for $\umb^0$. The imaginary part of $\boldsymbol\mu_i$ is unimportant for the above condition to be true. 

Now observe that the map $\mathbf{J}$ of $\HH$ induces an orientation-reversing isometry of $\mathbf{P}(c_{i_1}, c_{i_2}, c_{i_3})$ which fixes its seams; with the gluing matching the seams, as above, this extends, in an obvious way, to an orientation-reversing involution of $\Sigma$. Following Section~\ref{sec:marking}, this specification is equivalent to a specification of a marking decomposition on $\Sigma$. This gives us a way to define the marking $f_{\uc, \umb^0}\co \Sigma \to \Sigma (\uc, \umb^0)$.

Finally, we define a marking on the surface $\Sigma(\uc, \umb)$. After applying a suitable stretching to each pants in order to adjust the lengths of the boundary curves, we can map $\Sigma(\uc, \umb^0) \to \Sigma(\uc,\umb)$ using a map which is the Fenchel--Nielsen twist ${\Tw}_{\s_i} (\Re \mu_i + c_i)$ on an annulus around $\s_i \in \PC, i=1,\ldots, \xi$ and the identity elsewhere, see Section~\ref{sec:dehntwists} for the definition of ${\Tw}_{\s, t}$. This gives a well defined homotopy class of homeomorphisms $f_{\uc, \umb} \co \Sigma \to \Sigma(\uc, \umb)$\label{a37}. The stretch map used above depends on $\Im{\boldsymbol\mu}_i$.
 
As before, with this description, it is easy to see that $\Re {\boldsymbol\mu}_i$ corresponds to twisting about $\s_i$; in particular, ${\boldsymbol\mu}_i \mapsto {\boldsymbol\mu}_i + 2c_i$ is a full right Dehn twist about $\s_i$. The imaginary part $\Im {\boldsymbol\mu}_i$ corresponds to vertical translation and has the effect of scaling the lengths of the $\s_i$ in the complex projective structure.

\subsection{Relation with the c--plumbing construction}\label{plumb}

The gluing construction described by the author and Series \cite{mal_top} was essentially the same as Kra's plumbing construction \cite{kra_hor}. One advantage of our construction, in addition to providing simpler formulas, is that it respects the twisting around the punctures. In this section we want to compare the $\uc$--plumbing construction defined by Parker and Parkonnen \cite{par_coo} with the $\uc$--gluing construction described above. The advantages of the second description mentioned above remain.

Let's fix a marking decomposition $(\PC, \DD)$ of $\Sigma$ which consists of a pants decomposition $\PC = \{\s_1, \ldots, \s_\xi\}$ and a set of dual curves $\DD = \{D_1, \ldots, D_\xi\}$. If $\s_{i_k}$ is a pants curve, let $c_{i_k}$ be the positive real number given by $\uc \in \R_+^\xi$; if $\s_{i_k}$ is an element of $\partial \Sigma$, let $c_{i_k} = 0$. Every pair of pants $P \in \P$ has three boundary components $\s_{i_1}, \s_{i_2}$ and $\s_{i_3}$ which could be pants curves in $\PC$ or punctures in $\partial \Sigma$. We can identify $P$ with the `standard' pair of pants $\mathbf{P} = \mathbf{P}(c_{i_1},c_{i_2},c_{i_3}) = \HH/\boldsymbol\Gamma(c_{i_1},c_{i_2},c_{i_3})$ described in Section~\ref{sec:std}. 

Suppose that the pairs of pants $P, P' \in \P$ are adjacent along the pants curve $\s = \s_i$ of length $l = 2c_i = c > 0$. Let $${\boldsymbol\Phi}\co \mathrm{Int}(P) \to \mathbf{P} = \mathbf{P}(c,c_{i_2},c_{i_3})\;\;\text{and}\;\;{\boldsymbol\Phi}'\co \mathrm{Int}(P') \to \mathbf{P}' = \mathbf{P}(c,c_{i_4},c_{i_5})$$ be the identification maps described in Section \ref{sec:cglu}. Suppose $\partial_\infty\mathbf{P}$ and $\partial_\infty\mathbf{P}'$ are the boundary components corresponding to $\s$. Let $D$ be the curve dual to $\s$ and let $$\A_{c} = \{z \in \C: e^{-\frac{\pi^{2}}{c}} < |z| < 1 \}$$ be an annulus of modulus $$\mathrm{mod}(\A_c) = \frac{1}{2\pi}\log{e^{\frac{\pi^{2}}{c}}} = \frac{\pi}{2c}$$ with its hyperbolic metric of constant curvature $-1$. The curve $\{z \in \C: |z| = e^{-\frac{\pi^{2}}{2c}}\}$ is the unique geodesic in $\A_{c}$ for this metric and has length $2c$. 

Let $U \subset \mathbf{P}$ and $U'\subset \mathbf{P}'$ be annular neighbourhoods of the ends of $\mathbf{P}$ and $\mathbf{P}'$, respectively, corresponding to the boundary $\partial_\infty\mathbf{P}$ and $\dd_\infty\mathbf{P}'$. We define local coordinates 
$$\hat z\co U \to \A_{c} \hspace{5mm} {\rm and} \hspace{5mm} \hat z'\co U' \to \A_{c}$$ 
by requiring that the maps $\hat z$ and $\hat z'$ are isometries and that the segments $D \cap U$ and $D \cap U'$ are mapped into $\A_{c} \cap \R$. (If $D \cap U$ and $D \cap U'$ are connected, we require them to be mapped into $\A_{c} \cap \R_+$.) These conditions uniquely define the maps $\hat z$ and $\hat z'$.

 Assume there are annuli $\A \subset U$ and $\A' \subset U'$ and assume there is a holomorphic homeomorphism $f\co \A \to \A'$ which maps the outer boundary of $\A$ to the inner boundary of $\A'$ and so that, for some complex parameter $\mathbf{t} \in \C$, we have $$\hat z(x)\hat z'\left(f(x)\right) = \mathrm{exp}(i\pi\mathbf{t}) \hspace{5mm}\forall x \in \A.$$ (Usually, this relationship is written $\hat z \hat z' = t$, but we have changed the notation since it will make the future results easier to state.) 
 
The outer boundaries of $\A$ and $\A'$ bound annuli on $\mathbf{P}$ and $\mathbf{P}'$. Remove these (closed) annuli to form two new Riemann surfaces $P_{trunc}$ and $P'_{trunc}$, and define an equivalence relation on it by setting, for all $x \in P_{trunc}, y \in P_{trunc}'$:
$$x \sim y \iff \hat z(x)\hat z'(y) = \mathrm{exp}(i\pi\mathbf{t}),$$ 
where $\mathbf{t} \in \C$. Let $X_{\mathbf{t}} = P_{trunc}\sqcup P'_{trunc} /\sim$. We say that $X_{\mathbf{t}}$ was obtained from $P$ and $P'$ by the $c$--plumbing construction with plumbing parameter $\mathbf{t} = \mathbf{t}_i$. Repeating this construction for all the pants curves in $\PC$, we can describe the $\uc$--{\it plumbing construction with plumbing parameters} $\utb = (\mathbf{t}_1, \ldots, \mathbf{t}_\xi) \in \left(\C\right)^\xi$.

\begin{Remark}
  Note that we can generalize this construction to the case that some parameters $c_i$ are zeroes. When $c = 0$, the annulus $\A_c$ becomes the punctured disk $\D^* = \D - \{0\}$ and the surface which is created has a puncture homotopic to $\s$. In that case, we define groups on the boundary of the quasi-Fuchsian space, and no longer in its interior. Kra's plumbing construction is obtained with a similar method, but using punctured disks $\D^*$, that is, we can consider it to be `associated' to the parameter $\uc = \underline 0 = (0, \ldots, 0)$. 
\end{Remark}

In Section \ref{sec:std}, supposing that we are gluing $\partial_\infty(P)$ to $\partial_\infty(P'),$ we described local coordinates $$z\co U \to \mathfrak{A} \hspace{5mm} {\rm and} \hspace{5mm}z'\co U' \to \mathfrak{A},$$ where 
$$\mathfrak{A} = \{z \in \HH : |z - C_1|> R_1, |z + C_1|>R_1\},$$ 
 $C_1 = \frac{\cosh c_{i_1}}{\cosh c_{i_1} -1}$ and $R_1 = \frac{1}{\cosh c_{i_1} -1}$. These two circles correspond to the isometric circles of the matrix $A_\infty = \left(
\begin{array}{cc}
  \cosh c  & \cosh c+1 \\
  \cosh c-1 & \cosh c \\
\end{array}
\right)$ and of its inverse $A_\infty^{-1}$. The region $\mathfrak{A}$ is a fundamental region for the action of the matrix $A_\infty$ on $\HH$. In addition, if $\boldsymbol\mu \in \C_{\pi}$ is the gluing parameter, we describe the gluing between $x \in \A$ and $y \in \A'$ as:
\begin{equation}\label{glue_mal}
  z'(y) = \mathbf{T}_{\boldsymbol\mu} \circ \mathbf{J}\; z(x).
\end{equation}

The parameters $z, z'$ and $\hat z, \hat z'$ are related by the following result.

\begin{Proposition}\label{glue_plum}
  The $\uc$--gluing construction with parameter $\umb \in (\C_{[0,\pi)})^\xi$ is a $\uc$--plumbing construction with parameter $\utb \in \HH^\xi$ and the parameters are related by:
  \begin{equation}
    \mathbf{t}_i = \frac{i\pi - {\boldsymbol\mu}_i}{c_i}, \;\;\forall i = \{1, \ldots, \xi\}.
  \end{equation}
\end{Proposition}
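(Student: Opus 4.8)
The plan is to compare, on the overlap annulus around a single pants curve $\s_i$, the two coordinate systems: the ``plumbing'' coordinates $\hat z, \hat z'$ with values in the standard annulus $\A_c$ (where $c = 2c_i$), and the ``gluing'' coordinates $z, z'$ with values in the fundamental domain $\mathfrak{A}$ for $\langle A_\infty\rangle$ acting on $\HH$. Since it suffices to work one pants curve at a time (both constructions are carried out curve-by-curve and the annuli are disjoint), I fix $\s = \s_i$, assume the gluing is along $\dd_\infty\mathbf{P}$ and $\dd_\infty\mathbf{P}'$ (the general case reduces to this via the maps $\boldsymbol\Omega_\e$, $\boldsymbol\Omega_{\e'}$, exactly as in Section~\ref{sec:cglu}), and work in the copy of $\HH$.

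First I would exhibit the explicit change of coordinates between $\mathfrak{A}$ and $\A_c$. The region $\mathfrak{A}$ is a fundamental domain for the hyperbolic cyclic group $\langle A_\infty(c)\rangle$, whose axis $\Ax(A_\infty)$ runs between the fixed points $\pm\coth(\tfrac{c_i}{2})$ and which has translation length $2c_i = c$ by Lemma~\ref{hyper}. Hence $\mathfrak{A}/\langle A_\infty\rangle$ is a hyperbolic annulus of modulus $\pi/c$, i.e.\ isometric to $\A_c$; concretely, the M\"obius map $g$ sending $\coth(\tfrac{c_i}{2})\mapsto 0$ and $-\coth(\tfrac{c_i}{2})\mapsto\infty$ conjugates $A_\infty$ to $w\mapsto e^{c}w$, and then $w\mapsto w^{2\pi i/c}$ (a suitable branch) identifies the quotient with $\A_c$. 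Composing, one gets an explicit holomorphic isometry $W\co \mathfrak A \to \A_c$, $W(z) = \bigl(g(z)\bigr)^{2\pi i/c}$ with $g(z) = \dfrac{z - \coth(c_i/2)}{z + \coth(c_i/2)}$ (up to a unimodular constant, pinned down by the requirement that the dual curve $D$ map into $\A_c\cap\R$, resp.\ $\R_+$). Then $\hat z = W\circ z$ and $\hat z' = W'\circ z'$, where $W'$ is the analogous map for $\mathbf P'$ (note $c_{i_1}$ is the same for both pants, so $W$ and $W'$ have the same formula).

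Next I would substitute the gluing relation~\eqref{glue_mal}, $z'(y) = \mathbf{T}_{\boldsymbol\mu}\circ\mathbf J\, z(x)$, into $\hat z(x)\hat z'(y)$ and compute. Writing $z = z(x)$, we have
\begin{equation*}
\hat z(x)\,\hat z'(y) = W(z)\cdot W'\!\bigl(\mathbf T_{\boldsymbol\mu}\mathbf J z\bigr) = \bigl(g(z)\bigr)^{2\pi i/c}\cdot\bigl(g(\mathbf T_{\boldsymbol\mu}\mathbf J z)\bigr)^{2\pi i/c}.
\end{equation*}
The key computation is that $g\circ\mathbf T_{\boldsymbol\mu}\mathbf J\circ g^{-1}$, as a M\"obius map in the $w$-coordinate, is $w\mapsto (\text{const})\cdot w^{-1}$: indeed $\mathbf J$ is the order-two rotation swapping the two fixed points $\pm\coth(c_i/2)$ of $A_\infty$, so in $w$-coordinates it is $w\mapsto \kappa/w$ for some constant $\kappa$, and $\mathbf T_{\boldsymbol\mu}$ fixes those same two points (it lies in the one-parameter group through $A_\infty$ — one checks $\mathbf T_{\boldsymbol\mu}$ has the same fixed points $\pm\coth(c_i/2)$ directly from its matrix), hence is $w\mapsto e^{\boldsymbol\mu}w$ or $e^{-\boldsymbol\mu}w$ in $w$-coordinates. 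Therefore $g(z)\cdot g(\mathbf T_{\boldsymbol\mu}\mathbf J z)$ is a \emph{constant} independent of $z$ — call it $e^{A}$ — and raising to the power $2\pi i/c$ gives $\hat z(x)\hat z'(y) = \exp\bigl(\tfrac{2\pi i}{c}A\bigr)$, a constant, which is exactly the form of the plumbing relation $\hat z\hat z' = \exp(i\pi\mathbf t_i)$. Matching the constant yields $i\pi\mathbf t_i = \tfrac{2\pi i}{c}A$, and a careful bookkeeping of the constant $A$ (using $c = 2c_i$ and the precise normalisation of $g$, together with the fact that $\mathbf T_{\boldsymbol\mu}$ contributes $\pm\boldsymbol\mu$ and $\mathbf J$ contributes the factor that turns into the $i\pi$) gives $\mathbf t_i = \dfrac{i\pi - \boldsymbol\mu_i}{c_i}$. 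Finally, since $\boldsymbol\mu_i \in \C_{[0,\pi)}$, i.e.\ $\Im\boldsymbol\mu_i\in[0,\pi)$, one reads off $\Im\mathbf t_i = \dfrac{\pi - \Im\boldsymbol\mu_i}{c_i} > 0$, so $\utb\in\HH^\xi$ as claimed; one should also check that the holomorphic homeomorphism $f$ identifying the truncation annuli matches the gluing map, and that the outer/inner boundary orientations agree (this is where $\mathbf J$'s orientation-reversal on $\HH^2$ is used), but these are consequences of the same identity.

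The main obstacle I expect is \emph{pinning down the multiplicative constant and the branch of the power function} so that the answer comes out as exactly $\dfrac{i\pi-\boldsymbol\mu_i}{c_i}$ rather than off by a sign, a factor of $2$, or an additive multiple of $2\pi i/c_i$ (a Dehn twist). Getting this right requires using the normalisation conventions pinned down in Section~\ref{sec:std} and Section~\ref{sec:mark} — in particular the requirement that $D\cap U$, $D\cap U'$ land in $\A_c\cap\R$ (resp.\ $\R_+$), which fixes the unimodular ambiguity in $W, W'$, and the base-structure convention $\Re\boldsymbol\mu_i^0 = -c_i$, which fixes the additive ambiguity — and matching these against Parker–Parkkonen's normalisation for the $\uc$-plumbing. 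The computation that $\mathbf T_{\boldsymbol\mu}$ and $\mathbf J$ together diagonalise in the $w$-coordinate to $w\mapsto(\text{const})w^{\pm1}$ is the conceptual heart, and once that is in hand the rest is bookkeeping; I would double-check the constant against the known limiting statement (Theorem~\ref{thmE}), since $\mathbf t_i = \tfrac{i\pi-\boldsymbol\mu_i}{c_i}$ is precisely the substitution $\mu_i$ appearing there, providing a consistency check.
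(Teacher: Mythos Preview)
Your proposal is correct and follows essentially the same approach as the paper: conjugate the fixed points of $A_\infty$ to $0,\infty$ by a M\"obius map (the paper uses $M$ sending $\pm\coth(c/2)$ to $\infty,0$; you use $g$ with the opposite convention), then pass to $\A_c$ via the map $w\mapsto \exp\bigl(\tfrac{i\pi}{c}\mathrm{Log}\,w\bigr)$, and finally observe that since $\mathbf{T}_{\boldsymbol\mu}$ fixes and $\mathbf{J}$ swaps $\pm\coth(c/2)$, the product $M(z)\cdot M(\mathbf{T}_{\boldsymbol\mu}\mathbf{J}z)$ is the constant $-e^{-\boldsymbol\mu}=e^{i\pi-\boldsymbol\mu}$, which yields $\mathbf{t}_i=\tfrac{i\pi-\boldsymbol\mu_i}{c_i}$. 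The paper carries out this last constant computation explicitly rather than leaving it as bookkeeping, but the structure is identical.
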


\begin{figure}
\centering
\includegraphics[height=13.5cm]{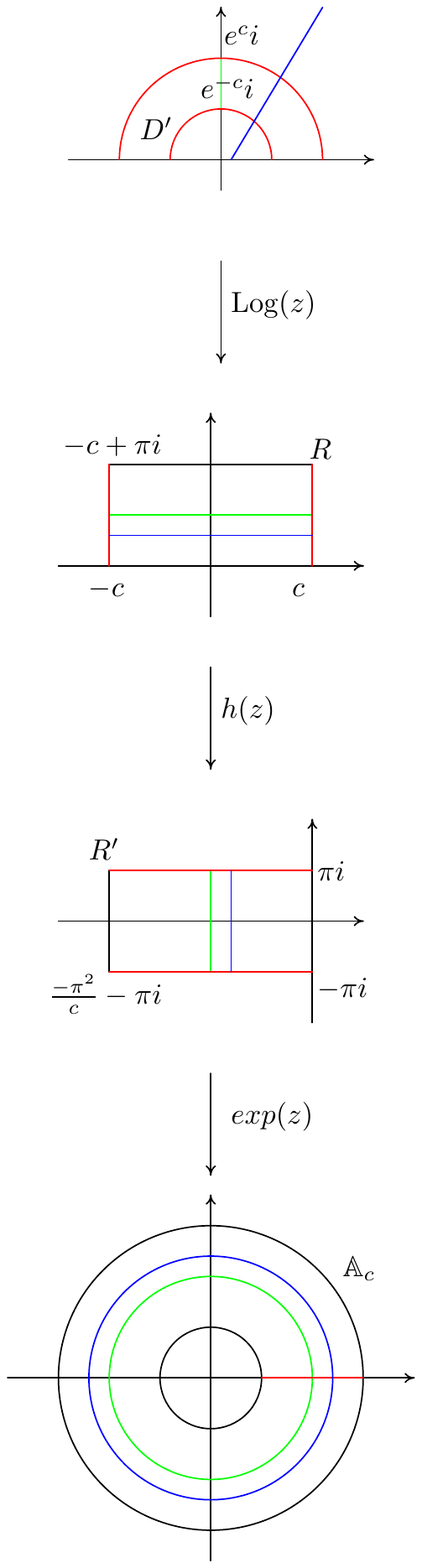}
\caption{The map $H \co \mathfrak{A}' \to \A_c$.}
\label{H}
\end{figure}

\begin{proof}
First, we want to find a function which maps $\mathfrak{A}$ into $\A_c$ and which sends $\Ax(A_\infty)$ to the geodesic $\{z \in \C: |z| = e^{-\frac{\pi^{2}}{2c}}\} \subset \A_c$.
This is done in two steps:
\begin{itemize}
  \item In the first step, use $M = \left(
  \begin{array}{cc}
  \sinh \frac{c}{2} & \cosh\frac{c}{2} \\
  -\sinh\frac{c}{2} & \cosh \frac{c}{2} \\
  \end{array}
  \right)$ to map $\Ax(A_\infty)$, which is the geodesic between $-\coth\frac{c}{2}$ and $\coth\frac{c}{2}$, to the imaginary axis. Note that the set $$M(\mathfrak{A}) = \mathfrak{A}' := \{z \in \HH : e^{-c} < |z| < e^{c} \}$$ is a fundamental domain for the action of the map $C= \left(
  \begin{array}{cc}
  e^{c} & 0 \\
  0 & e^{-c} \\
  \end{array}
  \right)$ on $\HH$.
  \item In the second step, use the map $H \co \mathfrak{A}' \to \A_c$ defined by $$H(z) = \exp(\frac{i\pi}{c} \mathrm{Log}z),$$ where:
  \begin{itemize}
    \item The principal value $\mathrm{Log}$ of the complex logarithm (that is, the branch with argument in $(-\pi, \pi]$) maps $\mathfrak{A}'$ into the rectangle $R = (-c,c) \times (0, \pi)$.
    \item The map $h$ defined by $h(z)=\frac{\pi i z}{c}$ maps $R$ into $R' = (\frac{-\pi^2}{c},0) \times (-\pi, \pi)$.
    \item The map $\exp$ maps $R'$ into $\A_c$.
  \end{itemize}
  See Figure \ref{H}.
\end{itemize}
Note also that $H(C(z)) = H(z)$, so this map extends to a well defined map $\mathfrak{A} \to \A_c$.

Now we can see the relationship between the coordinate $\hat z$ and the coordinate $z$:
\begin{equation}\label{map_H}
 \hat z =  \exp\left(\frac{i\pi}{c}\mathrm{Log}(M z)\right). 
\end{equation}

Second, we want to find the relationship between the $\uc$--gluing parameter $\boldsymbol\mu$ and the $\uc$--plumbing parameter $\mathbf{t}$. The relation $$\hat z \hat z' = \mathrm{exp}(i\pi\mathbf{t})$$ translates, using \eqref{map_H}, to:
\begin{align}\label{ct}
         \exp\left(\frac{i\pi}{c}\mathrm{Log}(M(z) M(z'))\right) &= \mathrm{exp}(i\pi\mathbf{t}) \notag\\
         \mathrm{Log}(M(z) M(z')) &= c\mathbf{t} \notag\\
         M(z) M(z') &= \exp(c\mathbf{t}).
\end{align}
Now, we can see that 
\begin{align*}
        M(z') &= M(\mathbf{T}_{\boldsymbol\mu} \circ \mathbf{J} (z)) \\
              &= \exp(-\boldsymbol\mu)\frac{\sinh \frac{c}{2} z - \cosh \frac{c}{2}}{\sinh \frac{c}{2} z + \cosh \frac{c}{2}},
\end{align*}
where the maps $\mathbf{T}_{\boldsymbol\mu}$ and $\mathbf{J}$ are defined in Equation \eqref{eqn:standardsymmetries1_bold}. On the other hand, $$M(z) = \frac{\sinh \frac{c}{2} z + \cosh \frac{c}{2}}{-\sinh \frac{c}{2} z + \cosh \frac{c}{2}}.$$
Hence  we have that $$M(z) M(z') = -\exp(-\boldsymbol\mu),$$ which corresponds, using Equation \eqref{ct}, to $$\exp(i \pi-\boldsymbol\mu) = \exp(c\mathbf{t}).$$ 
This gives the formula we want to prove.
\end{proof}

\begin{Remark}[Relationship with Fenchel-Nielsen construction]\label{FN-cons}
The Fenchel-Nielsen construction describes how, given a point $(\frac{\l_1}{2},\ldots, \frac{\l_\xi}{2}, \t_1,\ldots \t_\xi) \in \FN_{\C}(\QF)$ one can build a Kleinian group $G \cong \pi_1(\Sigma)$ such that $\FN_{\C}(G) = (\frac{\l_1}{2},\ldots, \frac{\l_\xi}{2}, \t_1,\ldots \t_\xi)$. In \cite{par_coo} the authors prove that in the case of the once-punctured torus $\Sigma_{1, 1}$ the Fenchel-Nielsen construction corresponds to a $c$--plumbing construction (and so to a $c$--gluing construction). That result can be generalized to our setting as well. In Appendix \ref{app:example} we will discuss the precise calculations in the case of a once-punctured torus $\Sigma_{1, 1}$ (HNN-extension) and of a four-punctured sphere $\Sigma_{0,4}$ (amalgamated free product AFP construction).
\end{Remark}

\subsection{Relation to $\QF(\Sigma)$}\label{sub:QF}

In this section, we want to prove the following:

\begin{Theorem}\label{QF}
  Suppose $\uc \in \R_+^\xi$ and $\umb \in \C_{[0,\pi)}^\xi$ are so that the developing map ${\rm Dev}_{\uc, \umb}$ associated to the complex projective structure $\Sigma(\uc, \umb)$ is an embedding, then the holonomy representation $\rho_{\uc, \umb}$ is in $\QF(\Sigma)$ and the holonomy images $\rho_{\uc, \umb}(\s_i)$ of the pants curves are hyperbolic elements with translation length $2c_i$ for $i = 1, \ldots, \xi$. 
\end{Theorem}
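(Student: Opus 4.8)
The plan is as follows. The assertion on pants curves is exactly Lemma~\ref{hyper}, so the real content is to show $\rho_{\uc, \umb} \in \QF(\Sigma)$. Throughout write $\Omega := \mathrm{Dev}_{\uc, \umb}(\widetilde\Sigma)$ and $G := \rho_{\uc, \umb}(\pi_1(\Sigma))$.

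First I would extract the soft consequences of injectivity. A projective chart is in particular a local diffeomorphism between surfaces, so $\mathrm{Dev}_{\uc, \umb}$ is a local homeomorphism; being injective it is an open embedding onto $\Omega$, and since $\widetilde\Sigma$ is simply connected, $\Omega \subset \hat\C$ is an open, connected, simply connected set homeomorphic to $\widetilde\Sigma$. By $\rho_{\uc, \umb}$-equivariance $G$ acts on $\Omega$, and $\mathrm{Dev}_{\uc, \umb}$ conjugates this action to the deck action of $\pi_1(\Sigma)$ on $\widetilde\Sigma$; hence $G$ acts on $\Omega$ freely and properly discontinuously. A subgroup of $\PSL(2,\C)$ acting properly discontinuously on a nonempty open subset of $\hat\C$ is discrete, and freeness makes $\rho_{\uc, \umb}$ faithful. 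Thus $G$ is a torsion-free Kleinian group, $\rho_{\uc, \umb}$ is an isomorphism $\pi_1(\Sigma) \to G$, the set $\Omega$ is a simply connected $G$-invariant open subset of $\Omega(G)$, and $\Omega/G$ is the marked surface $\Sigma(\uc, \umb) \cong \Sigma$; this already places $\rho_{\uc, \umb}$ in $\overline{\QF(\Sigma)}$, with the marking supplied by $f_{\uc, \umb}$ of Section~\ref{sec:mark}.

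The substantive step is to upgrade this to $G \in \QF(\Sigma)$, i.e.\ to produce a second invariant component. Here I would exploit that $G$ is assembled from \emph{real} pieces: each truncated block $\mathbf{S}_j$ sits inside $\mathbf{P}(c_{i_1}, c_{i_2}, c_{i_3}) = \HH/\boldsymbol\Gamma_j$ with $\boldsymbol\Gamma_j < \PSL(2,\R)$, so $\boldsymbol\Gamma_j$ preserves $\hat\R$ and also acts on the lower half-plane $\HH^-$, with $\HH^-/\boldsymbol\Gamma_j$ the mirror three-holed sphere. The $\PSL(2,\C)$ gluing maps $\boldsymbol\Omega_{\e'}^{-1}\mathbf{J}^{-1}\mathbf{T}_{\boldsymbol\mu_i}^{-1}\boldsymbol\Omega_{\e}$ that identify the developed copies of adjacent $\mathbf{S}_j$'s along the axes $\Ax(\rho_{\uc, \umb}(\s_i))$ simultaneously identify the corresponding funnel-side half-planes, and injectivity of $\mathrm{Dev}_{\uc, \umb}$ forces the developed translates of the $\widetilde{\mathbf{S}_j}$'s (together with the ambient half-planes) to be pairwise disjoint inside $\Omega$; hence the complementary translates of the $\HH^-$'s patch together into a second simply connected $G$-invariant domain $\Omega^- \subset \hat\C \setminus \overline\Omega$ with $\Omega^-/G$ homeomorphic to the mirror of $\Sigma$. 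Concretely, this says that the $\uc$-gluing with $\mathrm{Dev}_{\uc, \umb}$ injective realizes $G$ as an iterated Klein--Maskit combination --- amalgamated free products and HNN extensions over the loxodromic cyclic groups $\langle \rho_{\uc, \umb}(\s_i)\rangle$ (loxodromic by Lemma~\ref{hyper}), with building blocks the geometrically finite Fuchsian groups $\boldsymbol\Gamma_j$ --- the disjointness furnished by injectivity being exactly the interactive-pair hypothesis of those combination theorems. Their conclusion is that $G$ is geometrically finite and $\hat\C = \Omega \sqcup \Lambda(G) \sqcup \Omega^-$ with $\Lambda(G) = \partial\Omega = \partial\Omega^-$ a topological circle; that is, $G$ is quasi-Fuchsian. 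Since $f_{\uc, \umb}$ identifies $\Omega/G$ with $\Sigma$ as marked surfaces, $\rho_{\uc, \umb} \in \QF(\Sigma)$, and by Lemma~\ref{hyper} each $\rho_{\uc, \umb}(\s_i)$ is hyperbolic of translation length $2c_i$, i.e.\ has complex length $\l_i = 2c_i$.

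I expect the third step to be the main obstacle: discreteness together with a single simply connected invariant component of quotient $\cong \Sigma$ does \emph{not} imply quasi-Fuchsian (singly degenerate surface groups are precisely this situation), so one genuinely needs the explicit geometry --- following the funnel side $\HH^-$ of each real pants group through the gluing maps, controlling the positions of the infinitely many translated half-planes and truncation circles, and verifying that injectivity of $\mathrm{Dev}_{\uc, \umb}$ yields the precise disjointness required to apply the combination theorems (equivalently, to build $\Omega^-$ by hand). The bookkeeping of orientations --- recalling that $\mathbf{J}$ reverses orientation in $\HH^2$ but not in $\HH^3$ --- and of which gluings are amalgamated products versus HNN extensions as the pairs of pants are added one at a time is the delicate part; the rest is formal.
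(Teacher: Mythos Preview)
Your approach is essentially the same as the paper's: both deduce discreteness from the embedding of $\mathrm{Dev}_{\uc,\umb}$, invoke Lemma~\ref{hyper} for the pants curves, observe that injectivity of the developing map yields the disjointness of hypercycle regions required as the interactive-pair hypothesis for Maskit's combination theorems, and then appeal to those theorems to produce a two-component fundamental domain (equivalently, two invariant components of $\Omega(G)$) and conclude quasi-Fuchsian. Your write-up is more detailed than the paper's brief proof---in particular your remark that a single simply connected invariant component does not suffice, and your framing of the combination as iterated AFP/HNN over the loxodromic $\langle\rho_{\uc,\umb}(\s_i)\rangle$---but the strategy is identical; one small caution is that your description of $\Omega^-$ as literally patched from translates of $\HH^-$ is only heuristically correct when $\Im\boldsymbol\mu_i\neq 0$ (the gluing maps are not real), and the clean statement is exactly the combination-theorem output you already cite.
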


\begin{proof}
  Since ${\rm Dev}_{\uc, \umb}$ is an embedding, then $\Gamma_{\uc, \umb} = \rho_{\uc, \umb}(\pi_1(\Sigma))$ is Kleinian, and by construction (see Lemma~\ref{hyper}) the holonomy images of each curve $\s_1, \ldots, \s_\xi$ is hyperbolic with translation length $2c_i$. In addition, the hypothesis that ${\rm Dev}_{\uc, \umb}$ is an embedding enforces the hypercycles used in the $\uc$--gluing construction to be disjoint. (Note that one can also find precise conditions on the parameters $\uc, \umb$ that imply the same statement, as done in Theorem 4.1 of \cite{par_coo}, but in our case it will be much more complicated and we do not think will be particularly useful.) In this situation, we can use Maskit's Combination Theorems \cite{mas_kle} to show that $\Gamma_{\uc, \umb}$ has a fundamental domain with two components and each component glues up to give a surface homeomorphic to $\Sigma$, so $\Gamma_{\uc, \umb}$ is quasi-Fuchsian.
\end{proof}

\begin{Remark}
  We can prove even more, as done in Theorem 4.2 in \cite{par_coo}. In the hypothesis of Theorem \ref{QF} one can see that the support of the pleating locus on the `bottom' surface is $\PC$ and $\umb$ corresponds to the complex shear along the pants curves. In this result, it is important that we are bending in the same direction along all the curves, so the result we obtain is convex. Of course all the discussion could be generalized to the case $\umb \in (\C_{(-\pi, 0]})^\xi$, where $\C_{(-\pi, 0]} = \{ z \in \C \mid \mathrm{Im}(z) \in (-\pi, 0]\}$.
\end{Remark}

\section{Limits of c--plumbing structures} \label{sec:lim}

In this section we want to prove Theorem \ref{thmE}, that is we want to show that, when $\uc$ tends to $\underline{0} = (0, \cdots, 0)$, the $\uc-\text{plumbing}$ construction of Section \ref{sub:c_glu} limits to the gluing construction described in \cite{mal_top}. A corollary to Theorem \ref{thmE} which follows from the topology on the set of complex projective structure is the following result:
\begin{Corollary}
  Let $\uc = (c_{1},\ldots, c_{\xi}) \in \R_{+}^{\xi}$ and $\umb = ({\boldsymbol\mu}_1, \ldots, {\boldsymbol\mu}_\xi)\in (\C_{[0, \pi)})^\xi$. If $\uc \to \underline{0}$ keeping $\um  = (\mu_1, \ldots, \mu_\xi)$ fixed, where $\mu_i = \frac{i\pi-{\boldsymbol\mu}_i}{c_i}$, then the sequence of holonomy representations $(\rho_{\uc, \umb})$ defined by the $\uc$--gluing construction converges to the holonomy representation $\rho_\um$ defined by the gluing constriction of \cite{mal_top}. 
\end{Corollary}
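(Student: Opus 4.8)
The plan is to deduce this directly from Theorem \ref{thmE} together with the description of the topology on $\P(\Sigma)$ recalled in Section \ref{sub:complex}. Recall that $\P(\Sigma)$ carries the quotient, by the $\PSL(2,\C)$--action, of the compact--open topology on the set of pairs $(\Dev,\rho)$, which coincides with the topology of locally uniform convergence of developing maps. Consequently the holonomy map $\mathrm{hol}\co\P(\Sigma)\to\mathrm{Hom}(\pi_1(\Sigma),\PSL(2,\C))/\PSL(2,\C)$ is continuous, and, once a normalisation is fixed, it lifts to a continuous map into $\mathrm{Hom}(\pi_1(\Sigma),\PSL(2,\C))$ itself. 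Since $\Sigma(\um)$ is itself a genuine complex projective structure on $\Sigma$ (it lies on the boundary of $\QF(\Sigma)$ but is a bona fide element of $\P(\Sigma)$), this continuity applies at the limit point as well.

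First I would fix a common normalisation. Since $\xi\geq 1$, choose a base pair of pants $P_1$ one of whose boundary curves is a pants curve $\s_{i_1}$, and normalise $\Dev_{\uc,\umb}$ so that the holonomy of $\pi_1(P_1)$ is \emph{literally} the group $\boldsymbol\Gamma(c_{i_1},c_{i_2},c_{i_3})$ with the explicit generators $A_\infty,A_0,A_1$ of Section \ref{sec:std}; the same normalisation, with the same generators, is used for every member of the family and for the limiting structure. This is precisely the point of the careful choice of conjugacy class in Section \ref{sec:std} (compare Maskit and Parker--Parkkonen): as $\uc\to\underline 0$ the matrices $A_\infty(\uc),A_0(\uc),A_1(\uc)$ converge entrywise to the standard parabolic generators of a thrice--punctured--sphere group, which is the normalisation underlying the gluing construction of \cite{mal_top} (see also Appendix \ref{app:degenerate}). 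Hence the normalised holonomies of $\Sigma(\uc,\umb)$ and of $\Sigma(\um)$ are compatible, and not merely conjugate.

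With this normalisation in place the argument is short. By Theorem \ref{thmE}, as $\uc\to\underline 0$ with $\um$ fixed (so $\boldsymbol\mu_i=i\pi-c_i\mu_i$), the structures $\Sigma(\uc,\umb)$ converge to $\Sigma(\um)$ in $\P(\Sigma)$, i.e.\ the developing maps $\Dev_{\uc,\umb}$ converge locally uniformly to the developing map of $\Sigma(\um)$. By continuity of the normalised holonomy map this gives $\rho_{\uc,\umb}(\g)\to\rho_{\um}(\g)$ in $\PSL(2,\C)$ for every $\g\in\pi_1(\Sigma)$; evaluating on a finite generating set of $\pi_1(\Sigma)$ yields $\rho_{\uc,\umb}\to\rho_{\um}$ in $\mathrm{Hom}(\pi_1(\Sigma),\PSL(2,\C))$, which is the assertion.

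The only point I expect to require real care is the bookkeeping around normalisations: one must verify that the conjugacy class used to define $\rho_{\uc,\umb}$ here and the one implicit in the definition of $\rho_{\um}$ in \cite{mal_top} are matched in the limit, so that the conclusion is genuine convergence of representations rather than convergence up to conjugation. Since Theorem \ref{thmE} already builds this matching into the limiting statement at the level of projective structures, this is essentially automatic, but worth spelling out. As an alternative route that bypasses the abstract continuity statement, one could instead write each $\rho_{\uc,\umb}(\g)$ explicitly as a word in the matrices $A_\bullet(c_\bullet)$, $\boldsymbol\Omega_\bullet$, $\mathbf{J}$ and $\mathbf{T}_{\boldsymbol\mu}$, substitute $\boldsymbol\mu_i=i\pi-c_i\mu_i$, and take the $\uc\to\underline 0$ limit entrywise, recovering the generators of $\rho_{\um}$ from \cite{mal_top}; this is more computational and effectively re-proves Theorem \ref{thmE}.
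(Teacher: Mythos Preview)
Your proposal is correct and follows essentially the same route as the paper. The paper does not spell out a proof at all: it simply states that the Corollary ``follows from the topology on the set of complex projective structure[s]'' applied to Theorem~\ref{thmE}, and your argument is precisely this, with the normalisation bookkeeping made explicit (which is exactly the content the paper has already built into the proof of Theorem~\ref{thmE} by showing that $A_\e$, $\boldsymbol\Omega_\e$, $\mathbf{J}^{-1}\mathbf{T}_{\boldsymbol\mu}^{-1}$ converge entrywise to their counterparts in \cite{mal_top}).
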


We start by recalling this last construction and then prove the theorem in the following section. 

\subsection{Maloni and Series' gluing construction}\label{mal-ser}

The gluing construction we described in \cite{mal_top} is based on Kra's plumbing construction~\cite{kra_hor}. It describes a projective structure on $\Sigma$ obtained by gluing triply punctured spheres across their punctures. More precisely, given a pants decomposition $\P = \{P_1, \ldots, P_k\}$ on $\Sigma$, we fix an identification $\Phi_i \co \mathrm{Int}(P_i) \to \mathbb P$ of the interior of each pair of pants $P_i$ to the standard triply punctured sphere $\mathbb P = \HH /\Gamma$, where $$\Gamma = 
\Bigl <\begin{pmatrix}
	1  &  2  \\
	0 &  1 \\ 
	\end{pmatrix},
	 	\begin{pmatrix}
	1  &  0  \\
	2 &  1 \\ 
	\end{pmatrix} \Bigr >.$$ This identification induces a labelling of the three boundary components of $P_{i}$ as $0, 1, \infty$ in some order. We endow $\mathbb P$ with the projective structure coming from the unique hyperbolic metric on a triply punctured sphere. 
	
	\begin{figure}[hbt] 
  \centering 
  \includegraphics[height=15cm]{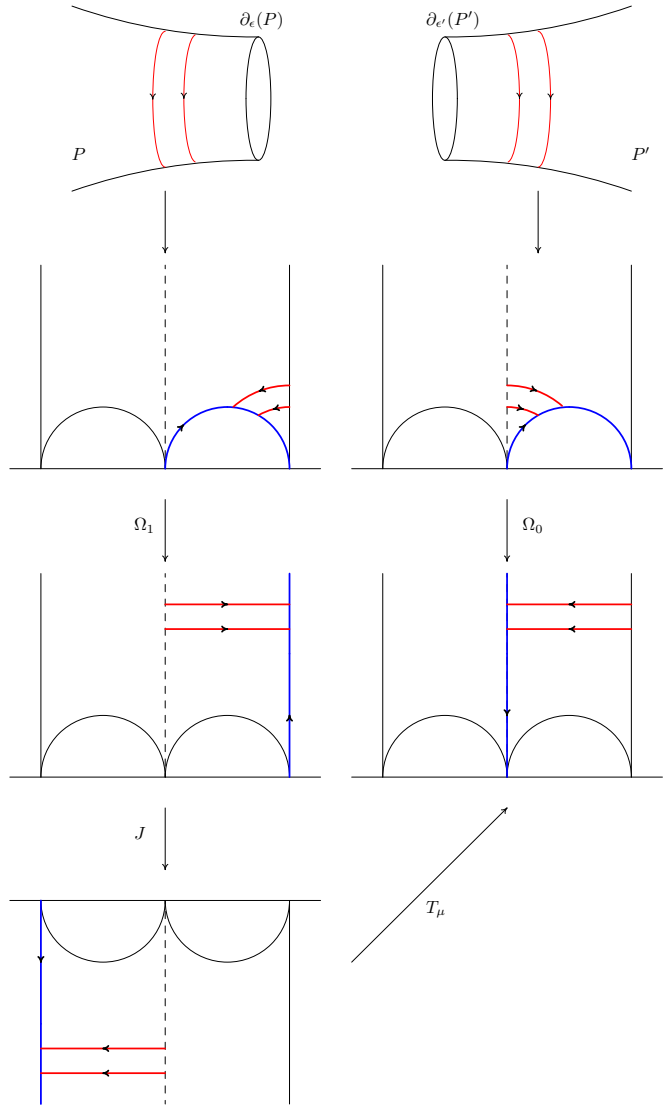} 
  \caption{The gluing construction from \cite{mal_top} in the case $\e = 1$ and $\e' = 0$.}
  \label{figure4_10}
  \end{figure}
  
  Suppose that the pants $P, P' \in \P$ are adjacent along the pants curve $\s$ and suppose that $\s$ corresponds to $\dd_{\e}P$ and $\dd_{\e'}P'$. The gluing is obtained by deleting open punctured disk neighbourhoods of the two punctures in question and gluing horocyclic annular collars round the resulting boundary curves. The gluing across $\s = \s_j$ is described by a parameter $\mu = \mu_j \in \HH$. 
  
  As before, we first describe the gluing when $\e = \e' = \infty$. Consider two copies $\mathbb P, \mathbb P'$ of $\mathbb P$. They are both identified with $\HH/\Gamma$. We refer to the copy of $\HH$ associated to $\mathbb P'$ as $\HH'$ and denote the natural parameters in $\HH$ and $\HH'$ by $z$ and $z'$ respectively. Let $\pi$ and $\pi'$ be the projections $\pi\co\HH \to \mathbb P$  and  $\pi'\co \HH' \to \mathbb P'$ respectively. Let $h= h(\mu) $ be the simple loop on $\mathbb P$ which lifts  to the horocycle $\{z \in \HH | \Im z= \frac{\Im \mu}{2}\}$ on $\HH$. Given $\eta>0$, let $$H = H(\mu,\eta) = \{z \in \HH \mid \frac{\Im \mu-\eta}{2} \leqslant \Im z \leqslant  \frac{\Im \mu+\eta}{2}\} \subset \HH$$ be a horizontal strip which projects to an annular neighbourhood $A$ of $h \subset \mathbb P$. Let $S \subset \mathbb P$ be the surface obtained from $\mathbb P$ by removing the open horocyclic neighbourhood $\{z \in \HH \mid \Im z >  \frac{\Im \mu+\eta}{2}\}$ of $\infty$. Define $h', S'$ and $A'$ in a similar way.  We will glue $S$ to $S'$ by matching $A $ to $A'$  so that $h$ is identified to $h'$ with orientation reversed, see Figures~\ref{figure4_10}. Let
  \begin{equation}
  \label{eqn:standardsymmetries1}
  J = \begin{pmatrix}
  	-i  &  0  \\
  	0 &  i   
  	\end{pmatrix},  \ \ 
  T_{\mu}=  \begin{pmatrix}
  1 & \mu  \\
  	0 &  1 \end{pmatrix}.
  \end{equation}
   We glue the strips $H$ to $H'$ by  identifying $z \in H$ to $z' = T_{\mu} J(z) \in H'$. This identification descends to a well defined identification of the annuli $A$ with $A'$, where the `outer' boundary of $A$ is identified to the `inner' boundary of $A'$ and $h$ is glued to $h'$ reversing the orientation. 
   
   In the general case, first `move' $\e$ and $\e'$ to $\infty$ using the maps \begin{equation}
   \label{eqn:standardsymmetries}
   \Omega_{0} = \begin{pmatrix}
   	1  &  -1  \\
   	1 &  0   
   	\end{pmatrix},  \ \ 
    \Omega_{1} =  \begin{pmatrix}
   0 &  -1  \\
   	1 &  -1 \end{pmatrix},  \ \ 
   	\Omega_{\infty} = \Id = \ \begin{pmatrix}
   	1  &  0  \\
   	0 &  1 	\end{pmatrix}.
   \end{equation} and then proceed as before. 
   
   Finally, we perform this construction for each pants curve $\s_i \in \PC$. In order to do this, we need to ensure that, for every pair of pants $\mathbb P_i$, the annuli corresponding to the three different punctures are disjoint. We can choose $\mu_i$ and $\eta$ so that this is true. This allows us to describe a complex projective structure on the quotient $$S(\underline{\mu}) = S_1 \sqcup \ldots \sqcup S_k/\sim$$ (homeomorphic to $\Sigma$), where the equivalence relation $\sim$ is given by the attaching maps along the annuli $A_i$.

\subsection{Proof of Theorem \ref{thmE}} \label{subsec:lim}

We now want to prove Theorem \ref{thmE}. We will show that the developing maps $\mathrm{Dev}_{\uc, \umb}$ (associated with the complex projective structure $\Sigma(\uc, \umb)$) converges locally uniformly to the developing maps $\mathrm{Dev}_{\um}$ (associated with $\Sigma(\um)$). We identify the universal cover of $\Sigma$ with the set of homotopy classes of paths on $\mathbf{S}(\uc, \umb)$ and $S(\um)$ respectively and then calculate the images of these path by following the recipe described by the gluing construction of \cite{mal_top} (see also the next section) and the $\uc$--gluing construction described in the previous section. To do this we have to describe the developing image of any path $\g$ on $S(\underline{\mu})$ and $\mathbf{S}(\uc, \umb)$.

\begin{proof}[Proof of Theorem \ref{thmE}]
First we show that, when $c_{i_1},c_{i_2}$ and $c_{i_3}$ tend to $0$, the group $\boldsymbol\Gamma(c_{i_1}, c_{i_2}, c_{i_3})$ limits to the group $\Gamma = \langle\left(
\begin{array}{cc}
  	1  &  -2  \\
  	0 &  1 \\ 
  	\end{array}
    \right), \left(
    \begin{array}{cc}
	1  &  0  \\
	2 &  1 \\ 
	\end{array}
  \right) \rangle$ in the sense that each generator does:  When $c_{i_1}$ tends to $0$, then $\cosh c_{i_1}$ tends to $1$. This shows that the matrix $$A_\infty = A_\infty(c_{i_1}, c_{i_2}, c_{i_3}) \to \left(
  \begin{array}{cc}
  1  & 2 \\
  0 & 1 \\
  \end{array}
  \right) = \left(
  \begin{array}{cc}
    	1  &  -2  \\
    	0 &  1 \\ 
    	\end{array}
      \right)^{-1}.$$ 
For describing the other two limits we need to use the Taylor expansion of the hyperbolic functions. In particular, we use: $$\sinh x = x + O(x^3), \;\; \cosh x = 1 + O(x^2),\; \;\tanh x = x + O(x^3),\;\;\coth x = \frac{1}{x} + O(x).$$ We  use the symbol $\approx$ when we mean `up to terms of lower degree'. Using the Taylor approximations described above and Equation \eqref{nu}, you can see that $\nu_1 \approx \frac{c_{i_1} c_{i_2}}{2}$, while $\nu_2 \approx \frac{c_{i_1} c_{i_3}}{2}$. Hence, as $c_{i_1},c_{i_2},c_{i_3} \to 0$, we have that: $$-\coth(\frac{c_{i_1}}{2})\tanh(\frac{\nu_1}{2}) \sinh c_{i_2} \to 0, \;\; -\tanh(\frac{c_{i_1}}{2})\coth(\frac{\nu_1}{2}) \sinh c_{i_2} \to -2.$$ So $$A_0 = A_0(c_{i_1}, c_{i_2}, c_{i_3}) \to \left(
  \begin{array}{cc}
  1  & 0 \\
  -2 & 1 \\
  \end{array}
  \right)= \left(
  \begin{array}{cc}
1  &  0  \\
2 &  1 \\ 
\end{array}
\right)^{-1},$$
  as we wanted to prove.
  
  This implies that $\Ax\left(A_\infty\right)$ limits to the fixed point $\Fix(\left(
  \begin{array}{cc}
    	1  &  -2  \\
    	0 &  1 \\ 
    	\end{array}
      \right)) = \infty$ and, similarly, $\Ax\left(A_0\right)$ limits to the fixed point $\Fix(\left(
      \begin{array}{cc}
    1  &  0  \\
    2 &  1 \\ 
    \end{array}
    \right))$ and hence $\Ax\left(A_1\right) \to 1 = \Fix(\left(
    \begin{array}{cc}
  3  &  -2  \\
  2 &  -1 \\ 
  \end{array}
  \right))$. In addition, we can see that the hypercycle $\mathbf{h}_{\infty, \HH}(c_{i_1}, c_{i_2}, c_{i_3})$ defined in Section \ref{sec:cglu} limits to the horocycle $h_{\infty, \HH} = \zeta(h_\infty) = \{z \in \HH | \Im z = \frac{\mu_{i_1}}{2}\}$ defined in \cite{mal_top} by analyzing the limit of the point of intersection between the imaginary axis and $\mathbf{h}_{\infty, \HH}(c_{i_1}, c_{i_2}, c_{i_3})$, that is, of the point $i\coth(\frac{c_{i_1}}{2})\left(-\tan(\th)+\frac{1}{\cos(\th)}\right)$, where $\th = \frac{\Im{\boldsymbol\mu}_{i_1}}{2}$. We can do that by using the Taylor expansion of $\coth$ and $\tan$, and the identity $-\tan(\th)+ \mathrm{sec}(\th) = \tan(-\frac{\th}{2}+\frac{\pi}{4})$. We have 
    $$i\coth(\frac{c_{i_1}}{2})\left(-\tan(\th)+\frac{1}{\cos(\th)}\right) \approx i\frac{2}{c_{i_1}}\frac{1}{2}\left(\frac{-\Im{\boldsymbol\mu}_{i_1} +\pi}{2}\right) \to i \frac{\Im \mu_{i_1}}{2},$$
    as we wanted to prove.
 
 Finally, we need to prove that the gluing maps agree, that is, we need to prove that when $\uc \to \underline{0}$ keeping $\um  = (\mu_1, \ldots, \mu_\xi)$ fixed,
 $$\boldsymbol\Omega_{\epsilon}^{-1}\mathbf{J}^{-1}\mathbf{T}_{\boldsymbol\mu}^{-1}\boldsymbol\Omega_{\epsilon'} \to \Omega_{\epsilon}^{-1}J^{-1}T_\mu^{-1}\Omega_{\epsilon'}.$$
 First consider the case $\e = \e' = \infty$, where $\boldsymbol\Omega_{\infty} = \Omega_{\infty} = \Id$. We have:  \begin{align*}
  \rho_{\uc, \umb}(\vartheta) = \mathbf{J}^{-1}\mathbf{T}_{\boldsymbol\mu}^{-1} &= \begin{pmatrix}
    	0  &  \coth(\frac{c_i}{2})  \\
    	-\tanh(\frac{c_i}{2}) &  0   
    	\end{pmatrix}  \begin{pmatrix}
    \cosh \frac{\boldsymbol\mu}{2} & \sinh\frac{\boldsymbol\mu}{2}\coth \frac{c_i}{2} \\
    \sinh\frac{\boldsymbol\mu}{2}\tanh \frac{c_i}{2} & \cosh\frac{\boldsymbol\mu}{2} \\
    \end{pmatrix}\\[0.5ex] 
  & =   \begin{pmatrix}
     \sinh\frac{\boldsymbol\mu}{2}  &  \cosh\frac{\boldsymbol\mu}{2}\coth \frac{c_i}{2} \\
       -\cosh \frac{\boldsymbol\mu}{2}\tanh \frac{c_i}{2} &  -\sinh\frac{\boldsymbol\mu}{2}  
      \end{pmatrix}\\[0.5ex] 
      & =   i\begin{pmatrix}
            \cosh \frac{\mu c_i}{2} & -\sinh\frac{\mu c_i}{2}\coth \frac{c_i}{2} \\
             \sinh\frac{\mu c_i}{2}\tanh \frac{c_i}{2} & -\cosh\frac{\mu c_i}{2}
            \end{pmatrix}.
  \end{align*}
 In the last equality we used the substitution $\boldsymbol\mu = i\pi-\mu c_i$, coming from our hypothesis, and the formulae:
 $$\sinh(i \frac{\pi}{2}-x) = i\cosh(x) \;\;\text{and}\;\;\cosh(i \frac{\pi}{2}-x) = -i\sinh(x).$$
  If you consider the limit when $c_i \to 0$, we have that $$\sinh\frac{\mu c_i}{2}\coth \frac{c_i}{2} \to \mu$$ by using the Taylor expansion of the hyperbolic functions $\sinh$ and $\cosh$. This shows that $\mathbf{J}^{-1}\mathbf{T}_{\boldsymbol\mu}^{-1} \to J^{-1}T_\mu^{-1} = i\left(
   \begin{array}{cc}
   1 & -\mu \\
   0 & -1 \\
   \end{array}
   \right)$, as we wanted to prove. 
 
 In the general case in which $\e, \e' \in \{0,1,\infty\}$, we need to prove that $$\boldsymbol\Omega_\e(c_{i_1}, c_{i_2}, c_{i_3}) \to \Omega_{\e}.$$ This can be done by recalling the definition of $\Omega_{\e}$ and the fact that the geodesics $\Ax\left(A_\e\right)$ limits to $\e$ for $\e \in \{0, 1, \infty\}$. In fact, the map $${\boldsymbol\Omega}_{0} \co {\boldsymbol\Delta}_0(c_{i_3}, c_{i_1}, c_{i_2}) \to {\boldsymbol\Delta}_0(c_{i_1}, c_{i_2}, c_{i_3})$$ is the unique orientation-preserving map from the white hexagon ${\boldsymbol\Delta}_0(c_{i_3}, c_{i_1}, c_{i_2}) \subset {\boldsymbol\Delta}(c_{i_3}, c_{i_1}, c_{i_2})$ to the white hexagon ${\boldsymbol\Delta}_0(c_{i_1}, c_{i_2}, c_{i_3}) \subset {\boldsymbol\Delta}(c_{i_1}, c_{i_2}, c_{i_3})$ which sends $\Ax\left(A_{0}(c_{i_3}, c_{i_1}, c_{i_2})\right)$ to $\Ax\left(A_{\infty}(c_{i_1}, c_{i_2}, c_{i_3})\right)$. Hence it limits to the unique orientation preserving symmetry of $\Delta_0 \subset \Delta$ sending $0 \mapsto \infty$, that is $\Omega_0$. Similarly the unique orientation-preserving map $${\boldsymbol\Omega}_{1} \co {\boldsymbol\Delta}_0(c_{i_2}, c_{i_3}, c_{i_1}) \to {\boldsymbol\Delta}_0(c_{i_1}, c_{i_2}, c_{i_3})$$ which sends $\Ax\left(A_{1}(c_{i_2}, c_{i_3}, c_{i_1})\right)$ to $\Ax\left(A_{\infty}(c_{i_1}, c_{i_2}, c_{i_3})\right)$ limits to the unique orientation preserving symmetry of $\Delta_0 \subset \Delta$ sending $1 \mapsto \infty$, that is $\Omega_1$. This concludes the proof that ${\boldsymbol\Omega}_{\e} \to \Omega_{\e}$.
  \end{proof}
  
\section{Linear and BM--slices} \label{sec:BM}

In this section we discuss slices of quasi-Fuchsian space which contain the Kleinian groups defined by the $\uc$--gluing construction. The slices generalise the $c$--slice $\L_c(\Sigma_{1,1})$ of the quasi-Fuchsian space $\QF(\Sigma_{1,1})$ for the once punctured torus $\Sigma_{1,1}$ defined and studied by Komori and Yamashita in~\cite{kom_lin}. Given a pants decomposition $\PC = \{\s\}$ of $\Sigma_{1,1}$ and a positive number $c \in \R_+$, the $c$--\textit{slice} $\L_c$ is defined by
$$\L_{c} = \{\t \in \C_{(-\pi, \pi]} | (c,\t) \in \FN_{\C}\left(\QF(\Sigma_{1,1})\right)\}.$$
This slice has a special connected component, called the \textit{Bers--Maskit slice} $\BM_c(\Sigma)$, containing the Fuchsian points $\tau \in \R.$ This slice was studied by Komori and Parkkonen \cite{kom_ont}. See also McMullen \cite{mcm_com}. The geometric meaning of the Bers-Maskit slice is the following. Remember that every quasi-Fuchsian manifold contains a convex core whose boundary consists of locally convex embedded pleated surfaces. Let $\pl^\pm\co\QF(\Sigma) \to \mathcal{ML}(\Sigma)$ be the map which associates to a quasi-Fuchsian group the bending lamination in the top ($\pl^+$) or bottom ($\pl^-$) component of the boundary of the convex core, and let
$$X_c^\pm(\Sigma_{1,1}) = \{\t \in \L_c(\Sigma_{1,1})| \mathrm{supp}\left(\pl^\pm(G(c,\t))\right) = \s \}.$$

\begin{Theorem}[Theorem 2.1 of \cite{kom_ont}]\label{charac}
Let $\Sigma =\Sigma_{1,1}$ be a once punctured torus and let $\s \subset \Sigma$ be a simple closed curve. Then the complement of the Fuchsian locus $\F = \F_c(\Sigma)$ in $\BM_c = \BM_c(\Sigma)$ consists of two connected components $X_c^+ = X_{c,\s}^+(\Sigma_{1,1})$ and $X_c^- = X_c^-(\Sigma_{1,1})$ meeting $\F$ along the real line. The slice $\BM_c$ is simply connected and invariant under the action of the Dehn twist along $\s$. The component $X_c^-$ is in $\HH$, while $X_c^+$ is in the lower half plane $\LL$, and  they are interchanged by complex conjugation.
\end{Theorem}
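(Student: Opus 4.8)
The plan is to realize $\BM_c$ explicitly as the image of the bending deformation along $\s$, and then to extract every assertion from pleating-ray theory. For $t\in\R$ let $\rho_t\in\F_c$ be the Fuchsian once-punctured-torus group with $\FN_\R(\rho_t)=(c,t)$; since $\FN_\R\co\F(\Sigma_{1,1})\to\R_+\times\R$ is a bijection and Fuchsian groups are quasi-Fuchsian, these exhaust $\F_c$, and $\F_c=\L_c\cap\R\cong\R$. For $\theta>0$ let $\beta_\theta(\rho_t)$ be the group obtained by bending the convex core of $\rho_t$ along $\s$ through exterior angle $\theta$, i.e. McMullen's complex earthquake along $(t+i\theta)\s$ \cite{mcm_com}. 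Bending along $\s$ does not change the conjugacy class of $\rho(\s)$, so $\ell(\s)\equiv 2c$ and $\beta_\theta(\rho_t)\in\L_c$; a short computation with the $\FN_\C$ conventions gives $\FN_\C(\beta_\theta(\rho_t))=(c,\,t+i\theta)$ (up to the sign of $\theta$), and by construction the bottom boundary component of the convex core of $\beta_\theta(\rho_t)$ is pleated exactly along $\s$. Hence the small-bending groups fill a one-sided neighbourhood of $\F_c$ in $\L_c$ and lie in $X_c^-$.

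Next I would invoke pleating-variety theory (Keen--Series). On $\Sigma_{1,1}$ the support of any measured lamination containing the closed leaf $\s$ equals $\s$, so $X_c^-=\{G\in\L_c:\s\in\mathrm{supp}(\pl^-(G))\}$, and one shows: $X_c^{\pm}$ is open in $\L_c$; for each $t$ the path $\theta\mapsto\beta_\theta(\rho_t)$ extends to a maximal arc $\theta\in(0,\Theta(t))$ inside $\QF$, staying in $X_c^-$ and tending to $\partial\QF$ as $\theta\uparrow\Theta(t)$; and $\Theta$ is continuous, with $(t,\theta)\mapsto\FN_\C$-coordinate a homeomorphism onto $X_c^-$. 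Thus $X_c^-=\{t+i\theta:t\in\R,\ 0<\theta<\Theta(t)\}$ is the band in $\HH$ between $\R$ and the graph of $\Theta$, which is connected and simply connected. Since post-composing a representation with $z\mapsto\bar z$ is an orientation-reversing isometry of $\HH^3$, it is an involution of $\QF(\Sigma_{1,1})$ that fixes $\F_c$ pointwise, swaps $\Omega^{\pm}$ and hence $\pl^{\pm}$, and acts on $\FN_\C$ by $\tau\mapsto\bar\tau$; applying it to the previous sentence gives $X_c^+=\overline{X_c^-}=\{t-i\theta:0<\theta<\Theta(t)\}\subset\LL$, interchanged with $X_c^-$ by complex conjugation, with $X_c^+\cap X_c^-=\emptyset$ and $\overline{X_c^{\pm}}\cap\L_c\subseteq X_c^{\pm}\cup\F_c$.

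Now set $Y:=X_c^+\cup\F_c\cup X_c^-$; the three pieces glue along $\F_c$, so $Y$ is connected. By the first paragraph $Y$ contains an $\L_c$-neighbourhood of every point of $\F_c$, and by openness of $X_c^{\pm}$ it contains $\L_c$-neighbourhoods of all other points, so $Y$ is open in $\L_c$; since $\F_c$ is closed in $\L_c$ and $\overline{X_c^{\pm}}\cap\L_c\subseteq X_c^{\pm}\cup\F_c$, $Y$ is also closed in $\L_c$. An open, closed, connected subset is a connected component, so $Y=\BM_c$. This gives $\BM_c\setminus\F_c=X_c^+\sqcup X_c^-$ with $X_c^-\subset\HH$ and $X_c^+\subset\LL$ interchanged by complex conjugation, and $\BM_c=\{t+is:t\in\R,\ |s|<\Theta(t)\}$ is a band, hence simply connected. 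Finally $\Tw_\s$ is a mapping class fixing $\s$, so it preserves $\QF$, $\F_c$ and pleating loci and acts on $\FN_\C$ by a real translation of size $2c$; therefore it preserves $\BM_c$ (and $\Theta$ is $2c$-periodic).

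\textbf{The main obstacle} is the middle input of the second paragraph: that each bending ray is a \emph{maximal} segment of quasi-Fuchsian groups with $\pl^-=\s$, that $\Theta(t)<\infty$ is continuous, and --- crucially --- that $\overline{X_c^-}\cap\L_c\subseteq X_c^-\cup\F_c$, this relative closedness being exactly what upgrades $Y\subseteq\BM_c$ to $Y=\BM_c$. This is the content of the Keen--Series pleating-ray package: the complex length and bending-angle functions are local homeomorphisms (and suitably convex) along rays, and a bending ray cannot be continued within $\QF$ past its endpoint on $\partial\QF$. Pinning down the normalization $\FN_\C(\beta_\theta(\rho_t))=(c,t+i\theta)$ of the first paragraph, while routine, is the other point where care with conventions is required.
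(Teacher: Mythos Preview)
The paper does not actually prove this theorem: it is stated as Theorem~2.1 of Komori--Parkkonen \cite{kom_ont} and simply quoted, with no argument supplied. So there is no ``paper's own proof'' to compare against; your proposal is a reconstruction of a result the author imports wholesale.

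As a reconstruction, your outline is reasonable and is in the spirit of the Keen--Series pleating machinery that Komori--Parkkonen themselves use. You are also honest about where the genuine content lies: the relative closedness of $X_c^{-}$ in $\L_c$ (equivalently, that a bending ray cannot limp back into $\QF$ with a different pleating support while keeping $\ell(\s)=2c$) is exactly the non-formal step, and it really does require the pleating-ray/complex-length local-diffeomorphism results. Two small cautions. First, the identification $\FN_\C(\beta_\theta(\rho_t))=(c,t+i\theta)$ depends on the particular normalization of complex Fenchel--Nielsen twist used; in some conventions the bending angle enters with a factor or a sign, so if you were writing this up you would need to pin this down against the $\FN_\C$ of Section~\ref{sub:fenchel_nielsen_coordinates}. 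Second, your assertion that $\Theta(t)<\infty$ is not automatic from bending alone: you need to know that the bending ray actually exits $\QF$ at a finite angle (which again is part of the Keen--Series package for rational laminations). None of this is a gap in your strategy, but both points are places where ``a short computation'' or ``one shows'' is hiding a citation rather than an exercise.
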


\subsection{Naive definition of the slice $\L_{c}(\Sigma)$} \label{sub:definitions}

Given a surface $\Sigma$, fix a marking decomposition $(\PC, \DD) = (\s_{1},\ldots, \s_{\xi}, D_{1},\ldots ,D_{\xi})$ on it, see Section \ref{sec:marking}. Let $\FN_{\C}\co\QF(\Sigma) \to (\C_+/2i\pi)^\xi \times (\C/2i\pi)^{\xi}$ be the complex Fenchel--Nielsen embedding of $\QF(\Sigma)$ into $(\C_+/2i\pi)^\xi \times (\C/2i\pi)^{\xi}$ associated to $(\PC, \DD)$ and defined by $$\FN_\C(G) = (\frac{\l_1}{2}, \ldots, \frac{\l_{\xi}}{2}, \t_1, \ldots, \t_\xi),$$ where $\l_i$ is the complex length of $\s_i$ and $\t_i$ is the complex twist, see Section \ref{sub:fenchel_nielsen_coordinates}. We define the $\uc$--\textit{slice} $\L_\uc$ to be the set
$$ \L_{\uc} = \L_{\uc}(\Sigma) = \{\ut \in (\C_{(-\pi, \pi)})^\xi | (\uc, \ut) \in \FN_{\C}\left(\QF(\Sigma)\right)\}.$$

First, the $\uc$--slice has some periodicities and symmetries.
\begin{Proposition}
In the $\uc$--slice $\L_{\uc}(\Sigma)$ we have:
\begin{enumerate}[(i)]
\item $(\uc, \ut) \in \L_{c} \Rightarrow (\uc, \ut + 2\uc \cdot \underline{k}) \in \L_{c},$
\item $(\uc, \ut) \in \L_{c} \Rightarrow (\uc, -\ut) \in \L_{c},$
\end{enumerate}
where $\underline{k} \in \Z^\xi$.
\end{Proposition}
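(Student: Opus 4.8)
The plan is to exhibit both maps as restrictions of symmetries of the image $\FN_{\C}(\QF(\Sigma)) \subset (\C_{+}/2i\pi)^\xi \times (\C/2i\pi)^\xi$, using the two standard invariances of $\QF(\Sigma)$: invariance under the (extended) mapping class group of $\Sigma$ acting by change of marking, and invariance under complex conjugation $G \mapsto \overline{G}$ of Kleinian groups (i.e. under the reflection $z \mapsto \overline{z}$ of $\widehat{\C}$). In each part the only real content is to read off the induced transformation of the complex Fenchel--Nielsen coordinates of Section \ref{sub:fenchel_nielsen_coordinates}, which for the mapping classes involved is classical (Tan \cite{tan_com}, Kourouniotis \cite{kou_com}).

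For (i) I would use the right Dehn twist $\phi_i$ about $\s_i$. It is orientation preserving, hence acts on $\QF(\Sigma)$ and preserves it, and in the coordinates associated to $(\PC,\DD)$ it fixes every half--length $\frac{\l_j}{2}$ and every twist $\t_j$ with $j\neq i$, while $\t_i\mapsto\t_i+\l_i$ (the complexification of the real Fenchel--Nielsen statement, consistent with the observation in Section \ref{sec:mark} that $\boldsymbol\mu_i\mapsto\boldsymbol\mu_i+2c_i$ realises $\phi_i$). If $(\uc,\ut)\in\FN_{\C}(\QF(\Sigma))$ then $\l_i=2c_i$, so composing the maps $\phi_i^{k_i}$ over all $i=1,\dots,\xi$ keeps the point in $\FN_{\C}(\QF(\Sigma))$ and sends $\ut$ to $\ut+2\uc\cdot\underline{k}$, where $(2\uc\cdot\underline{k})_i=2c_ik_i$. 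Since this shift is real it does not change $\Im\t_i$, so the point stays in $(\C_{(-\pi,\pi)})^\xi$; hence $(\uc,\ut+2\uc\cdot\underline{k})\in\L_{\uc}$.

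For (ii) I would use the orientation-reversing involution $\mathbf{R}\co\Sigma\to\Sigma$ of the marking decomposition of Section \ref{sec:marking}, which fixes each $\s_i$ and each $D_i$. Combining the change of marking by $\mathbf{R}$ with complex conjugation of Kleinian groups gives a holomorphic automorphism $\Psi$ of $\QF(\Sigma)$ which in $\FN_{\C}$--coordinates fixes each $\frac{\l_j}{2}$ (since $\mathbf{R}$ preserves $\s_j$, hence its complex length, which here is the real number $2c_j$) and sends $\t_j\mapsto-\t_j$: the change of marking by $\mathbf{R}$ reverses the handedness of the twist, giving $\t_j\mapsto-\overline{\t_j}$, and the subsequent complex conjugation supplies the remaining bar. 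Therefore $(\uc,\ut)\in\FN_{\C}(\QF(\Sigma))$ implies $(\uc,-\ut)\in\FN_{\C}(\QF(\Sigma))$, and since $\Im(-\t_i)=-\Im\t_i\in(-\pi,\pi)$ we conclude $(\uc,-\ut)\in\L_{\uc}$.

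The main obstacle will be the sign bookkeeping in these two coordinate computations: one must verify, with the conventions fixed in Section \ref{sub:fenchel_nielsen_coordinates}, that a full Dehn twist shifts $\t_i$ by $+\l_i$ (and not by $+\frac{\l_i}{2}$), and that $\mathbf{R}$ followed by re-orientation produces $\t_j\mapsto-\t_j$ (and not $\t_j\mapsto\overline{\t_j}$, which is instead what bare complex conjugation gives). Both reduce to the corresponding real (Fuchsian) statements, after which (anti)holomorphicity of the symmetry forces the complex version and the preservation of $(\C_{(-\pi,\pi)})^\xi$ is immediate.
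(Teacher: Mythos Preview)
Your treatment of (i) coincides with the paper's: the paper phrases it as the change of marking that replaces the dual curve $D_i$ by its Dehn--twisted image $T_{\s_i}D_i$, which is exactly the dual description of letting the Dehn twist $\phi_i$ act on $\QF(\Sigma)$.

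For (ii) the routes differ. The paper's argument is a single sentence parallel to (i): replacing each dual curve $D_i$ by its inverse $D_i^{-1}$ reverses the sign of $\t_i$, and (implicitly) since the image $\FN_\C(\QF)$ is independent of this choice, the symmetry $\ut\mapsto -\ut$ follows. You instead build an explicit self-map of $\QF(\Sigma)$ by composing the change of marking by the orientation-reversing involution $\mathbf{R}$ with complex conjugation $G\mapsto\overline{G}$, and then track the effect on coordinates. Your version has the advantage of naming the actual symmetry of $\QF(\Sigma)$ responsible for the invariance; the paper's version is shorter but leaves to the reader why reversing the orientation of the dual curves leaves the image set $\FN_\C(\QF)$ unchanged --- and unpacking that step essentially amounts to producing the involution you use. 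One small correction: your map $\Psi$ is anti\-holomorphic rather than holomorphic (precomposition by any automorphism of $\pi_1(\Sigma)$ is holomorphic on the character variety, while $G\mapsto\overline{G}$ is anti\-holomorphic), though this is irrelevant to the conclusion.
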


\begin{proof}
Since we can consider a different marking (in particular a different set of dual curves $\DD$), we have the periodicity $\t_i \mapsto \t_i + 2 c_i$ given by exchanging the dual curve $D_i$ with the curve $T_{\s_i} D_i$ obtained by doing a Dehn--twist of the curve $D_i$ around the pants curve $\s_i$. This proves (i).

Similarly, by exchanging the dual curve $D_i$ with its inverse $D_i^{-1}$, we observe the symmetry $\t_i \mapsto -\t_i$ with respect to the hyperplane $\Re \t_i = 0$. Hence we have statement (ii).
\end{proof}

Second, since $\FN_{\R} \co \F \to \R_{+}^\xi \times \R^\xi$ is a bijection, we have also the following result:

\begin{Proposition}
The slice $\L_{c}$ has a connected component containing the real locus $\R^\xi$.
\end{Proposition}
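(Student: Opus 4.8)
The plan is to exhibit $\R^{\xi}$ itself as a connected subset of $\L_{\uc}(\Sigma)$ and then invoke the elementary fact that any connected subset of a topological space lies in a single connected component.

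First I would recall that the Fuchsian space $\F(\Sigma)$ is contained in $\QF(\Sigma)$: a Fuchsian group uniformizing $\Sigma$ has a round circle as its limit set and is therefore quasi-Fuchsian (this remains true when $\Sigma$ has punctures, where the Fuchsian group acquires parabolics at the punctures but the limit set is still a circle). By the last sentence of Section~\ref{sub:fenchel_nielsen_coordinates} (Theorem~1 of Tan~\cite{tan_com}), the restriction of $\FN_{\C}$ to $\F(\Sigma)$ agrees with $\FN_{\R}$, and $\FN_{\R}$ is a real-analytic bijection from $\F(\Sigma)$ onto $\R_{+}^{\xi}\times\R^{\xi}$.

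Next, fix $\uc=(c_{1},\ldots,c_{\xi})\in\R_{+}^{\xi}$. For each $\ut\in\R^{\xi}$, surjectivity of $\FN_{\R}$ yields a (unique) Fuchsian group $G(\uc,\ut)\in\F(\Sigma)\subseteq\QF(\Sigma)$ with $\FN_{\R}(G(\uc,\ut))=(\uc,\ut)$, and hence $\FN_{\C}(G(\uc,\ut))=(\uc,\ut)$. Since $\R\subseteq\C_{(-\pi,\pi)}$, this gives $\ut\in\L_{\uc}(\Sigma)$; as $\ut$ was arbitrary, $\R^{\xi}\subseteq\L_{\uc}(\Sigma)$. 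Because $\R^{\xi}$ is convex, hence connected, it is contained in a single connected component of $\L_{\uc}(\Sigma)$, which is the component asserted in the statement.

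I do not anticipate a genuine obstacle here. The only points that merit a word of care are: checking that the Fuchsian representatives really lie in $\QF(\Sigma)$ rather than merely in its closure (immediate from the definition of quasi-Fuchsian); keeping conventions straight, so that the first coordinate $\uc$ of $\FN_{\C}$ is the complex half-length, consistent with Theorem~\ref{QF} where the pants curves have translation length $2c_{i}$; and noting that the twist representatives taken in $\C_{(-\pi,\pi)}$ do accommodate all real twists. The remaining content is the topological triviality that a connected set meets only one connected component.
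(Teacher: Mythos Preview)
Your argument is correct and is exactly the one the paper intends: the proposition is stated as an immediate consequence of the sentence preceding it, namely that $\FN_{\R}\co\F\to\R_{+}^{\xi}\times\R^{\xi}$ is a bijection, and you have simply spelled out that inference in detail. There is nothing to add.
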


This proposition allows us to define the Bers--Maskit slice $\BM_c(\Sigma)$. 

\begin{Definition}
The \textit{Bers--Maskit slice} $\BM_c(\Sigma)$ is the connected component of $\L_c(\Sigma)$ which contains the Fuchsian locus $\F_c(\Sigma)$, that is the points $\ut \in \R^\xi \cap \L_{c}.$
\end{Definition}

It would be interesting to find the analog of Therem \ref{charac}. Note that the fact that the cardinality of $\PC$ is greater than one makes this situation more subtle since one can bend along different pants curves in opposite directions and this implies that the surfaces obtained would not be convex. Note also that the slice $\L_{\uc}$ is a generalisation of the \textit{shearing plane} $\E_{\PC,\uc}$ of Series \cite{ser_ker}. One could also define the linear slice as a generalisation of the \textit{horoplane} $\H_{\PC,c}$, where $c \in \R_{+}$. In that case, 
$$ \L'_{c} = \{(\uc,\ut) \in \R_+^\xi \times {\C_{[0, \pi)}}^\xi\cap \FN_{\C}\left(\QF(\Sigma)\right)| \sum_{i=1}^{\xi}c_i =c\}.$$ As discussed in \cite[Theorem 7.3]{ser_ker} this definition for $\L_{\uc}(\Sigma)$ might be too rigid. McMullen discussion on complex earthquakes and earthquake disks might also help in understanding the `right' definition for these slices.

\subsection{Connected components of $\L_c(\Sigma)$} \label{sub:extra_components}

\begin{figure}
[hbt] \centering
\includegraphics[height=3.5cm]{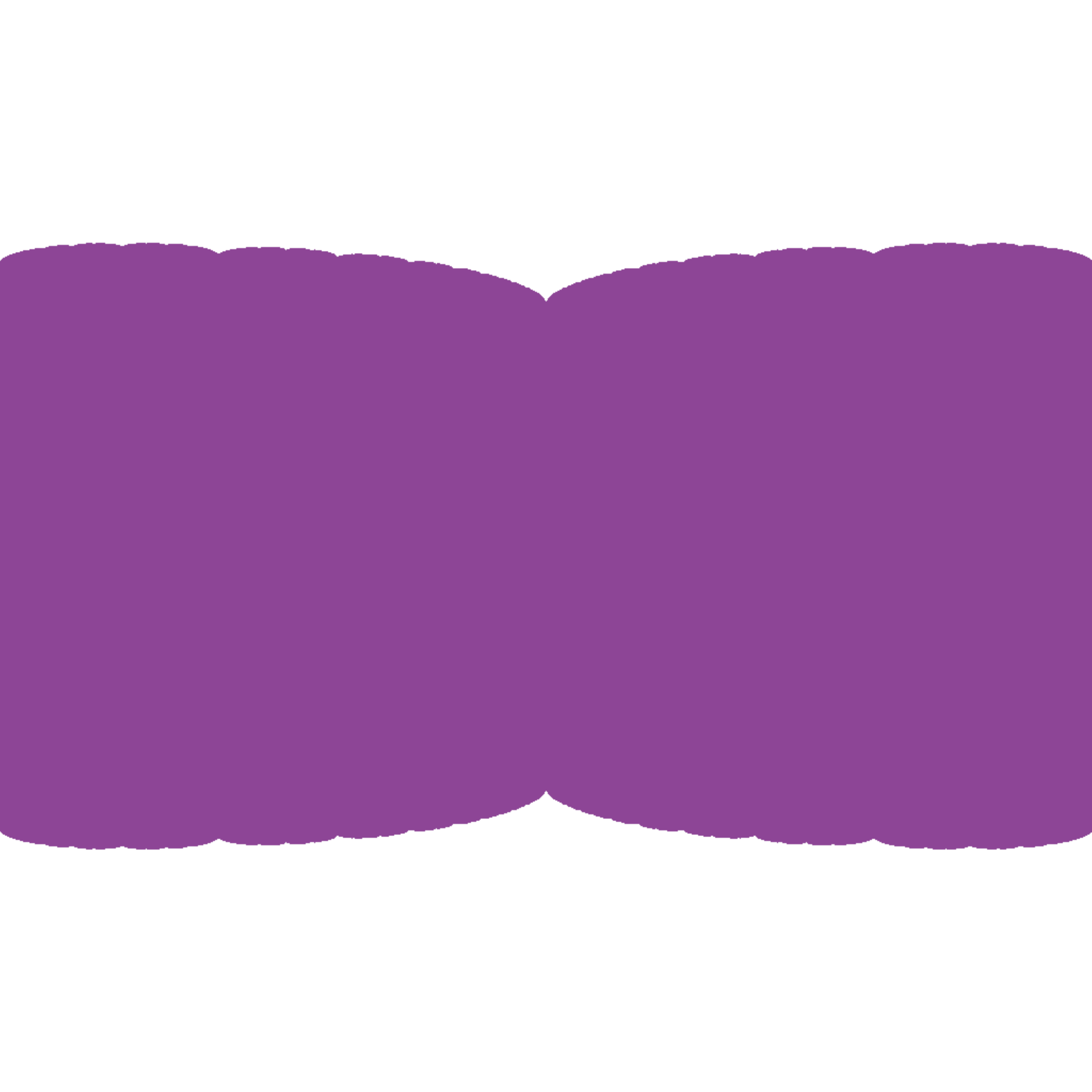}\;\;\;\; \includegraphics[height=3.5cm]{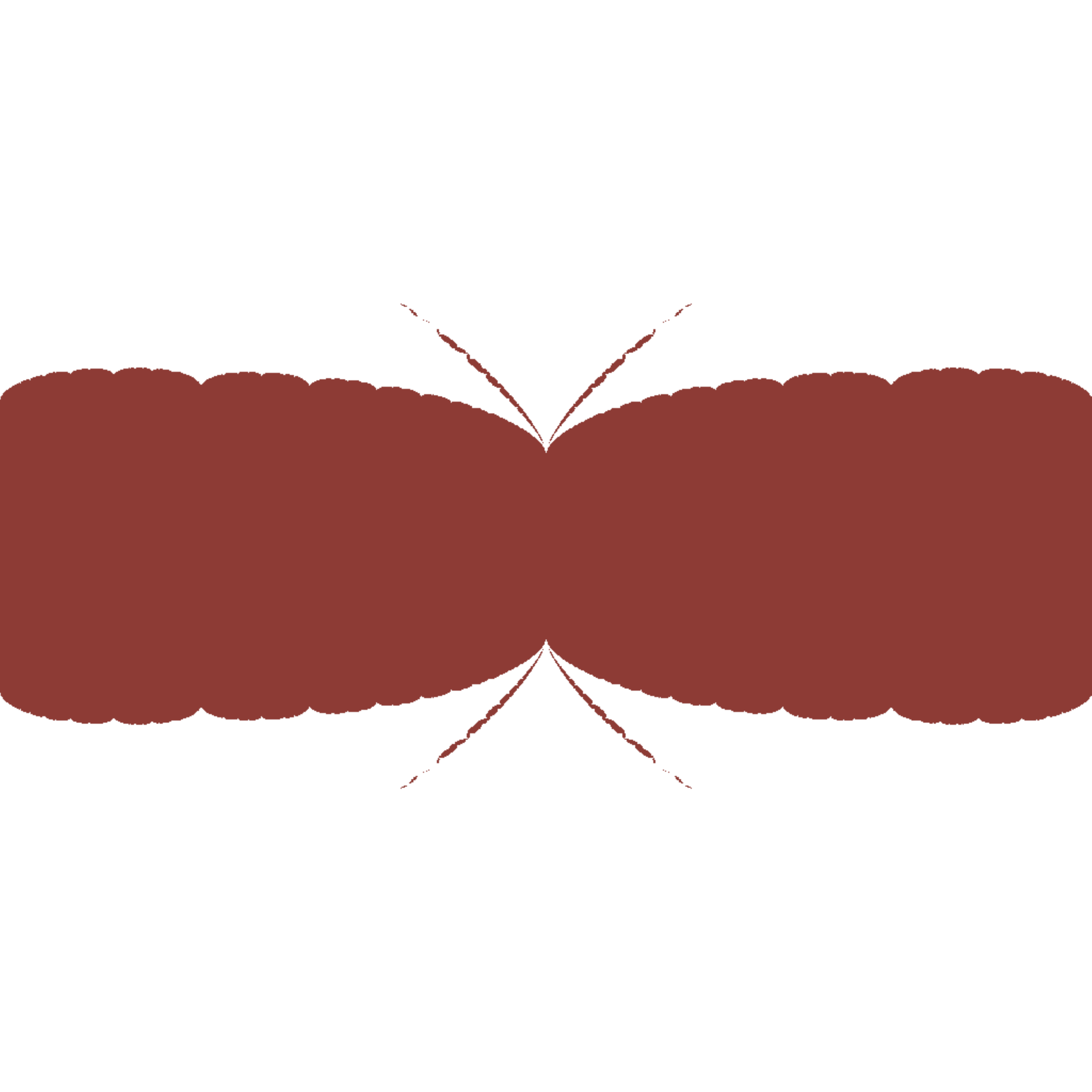}\\ 
\includegraphics[height=3.5cm]{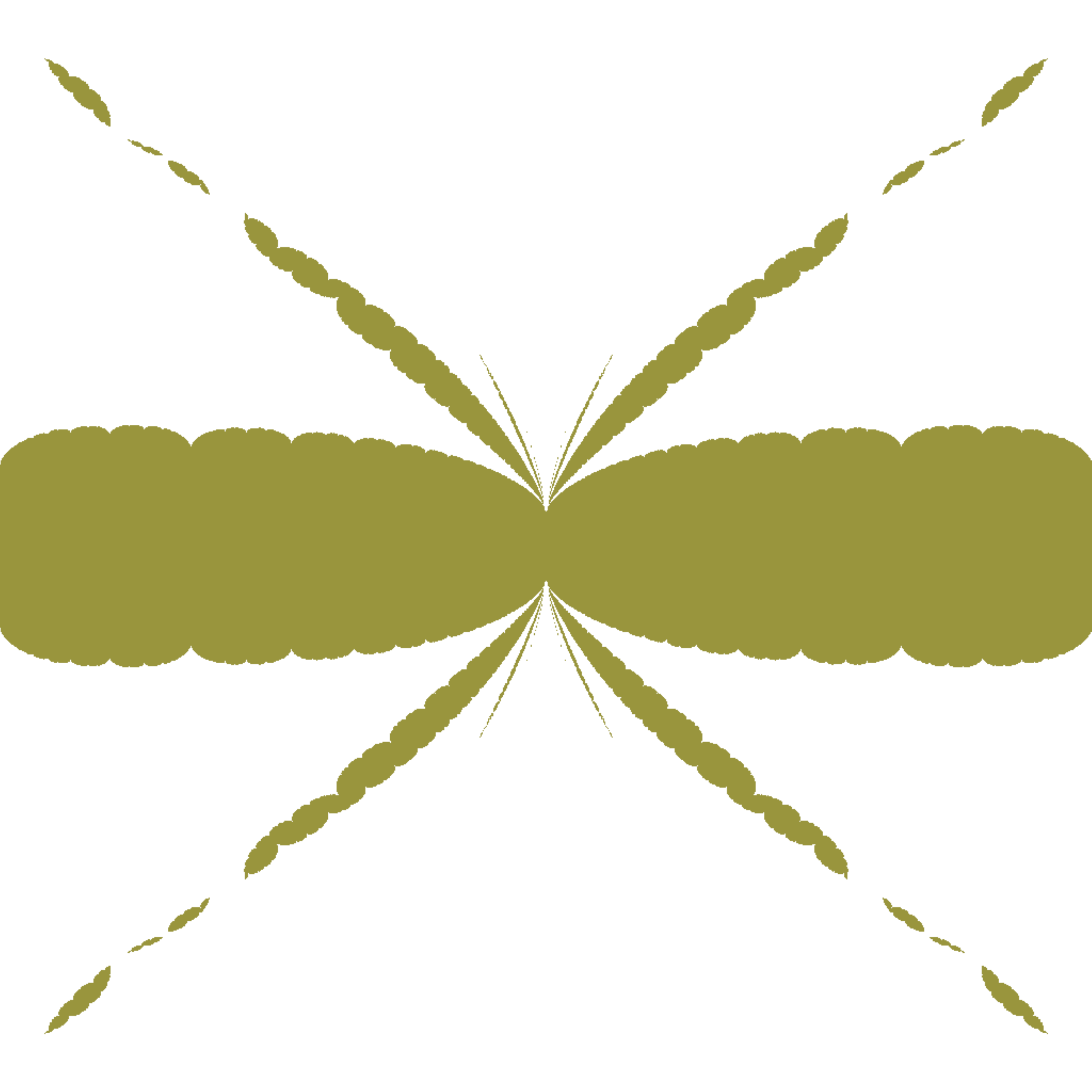} \;\;\;\;\includegraphics[height=3.5cm]{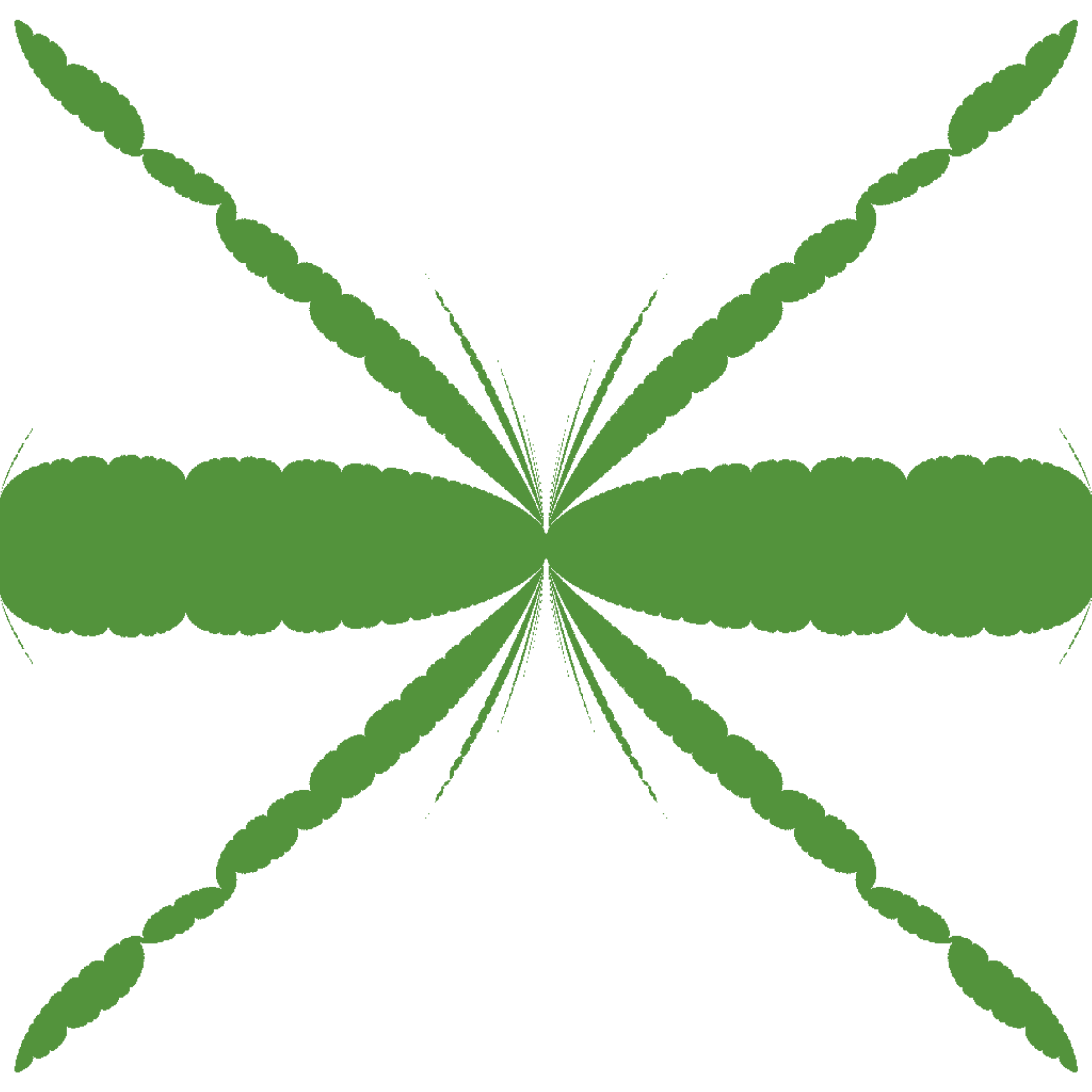}
\includegraphics[height=4cm]{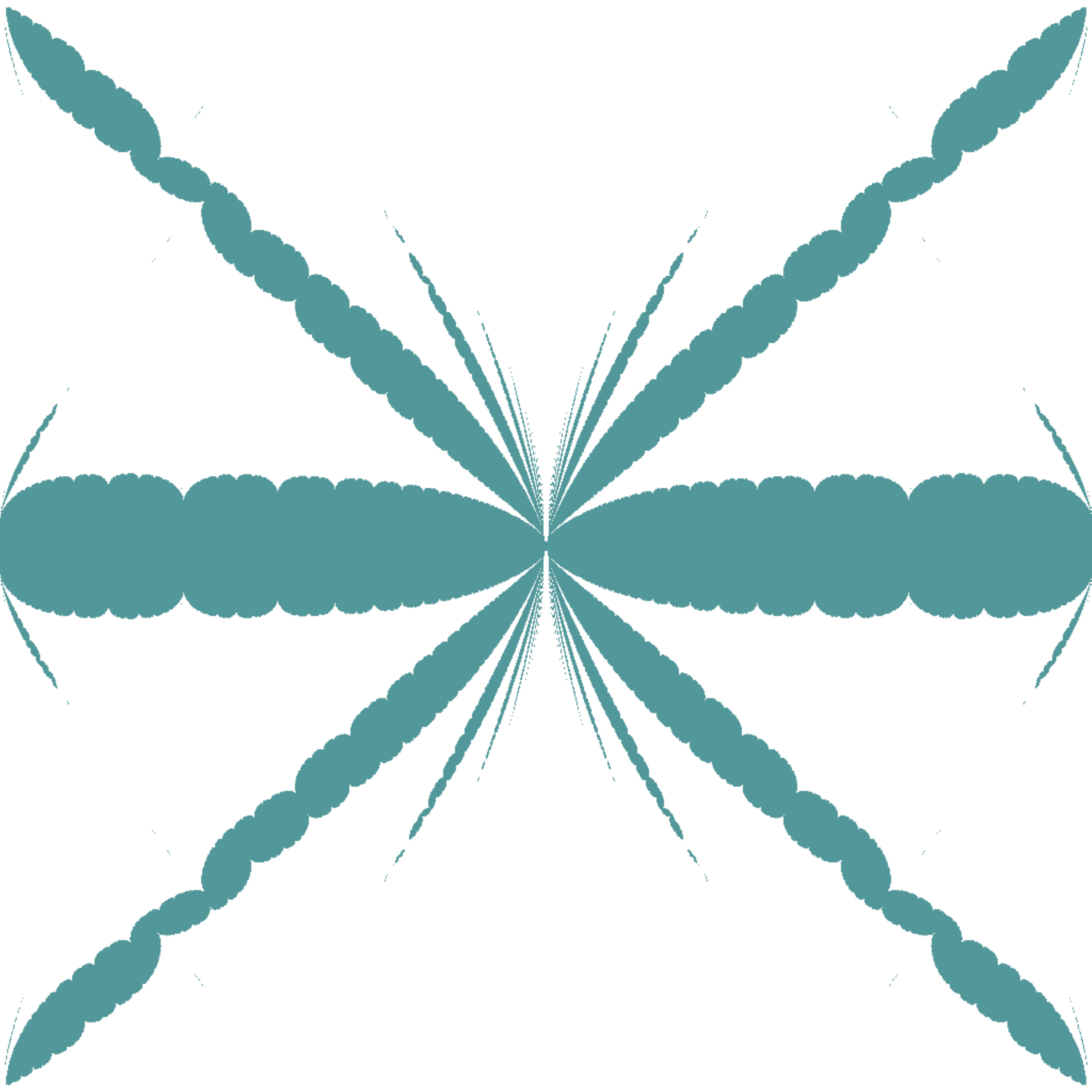}\;\;\;\;\includegraphics[height=3.5cm]{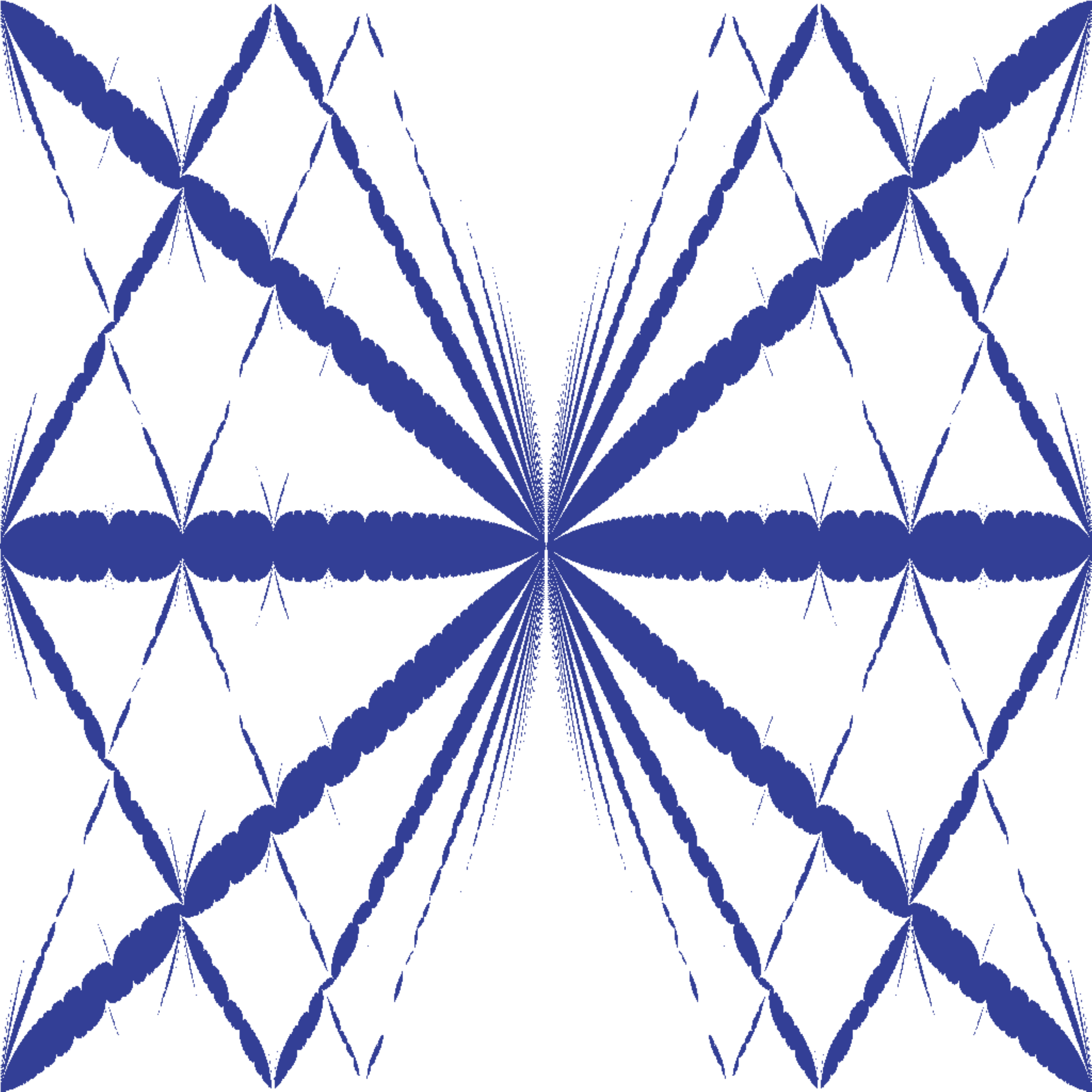}\;\;\;\;\includegraphics[height=3.5cm]{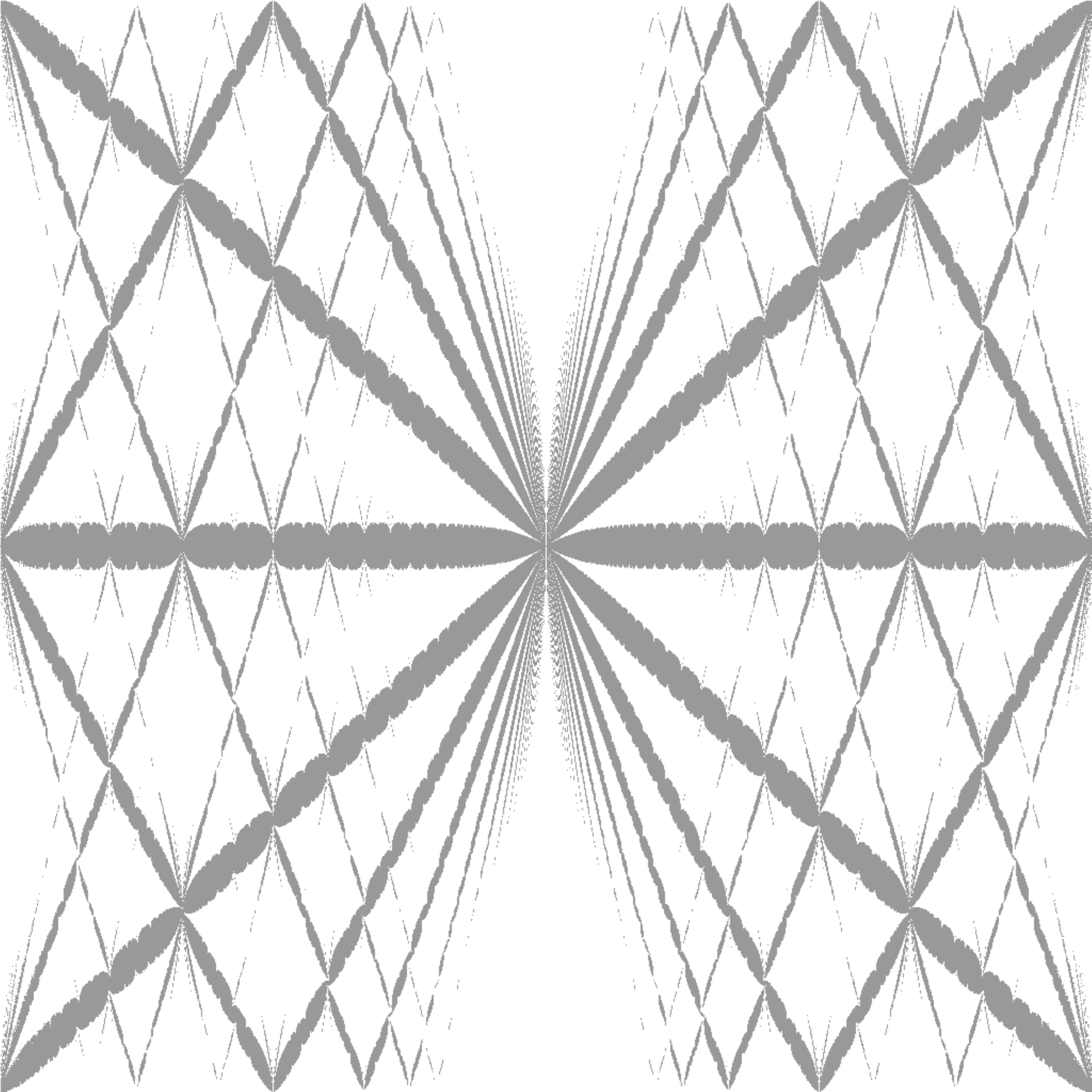}
\caption{The linear slice $\L_c$ when $c = 1,2,3,4,5,10, 20$ (joint work with Y. Yamashita)}
\label{conn}
\end{figure}

\begin{figure}
[hbt] \centering
\includegraphics[height=3.5cm]{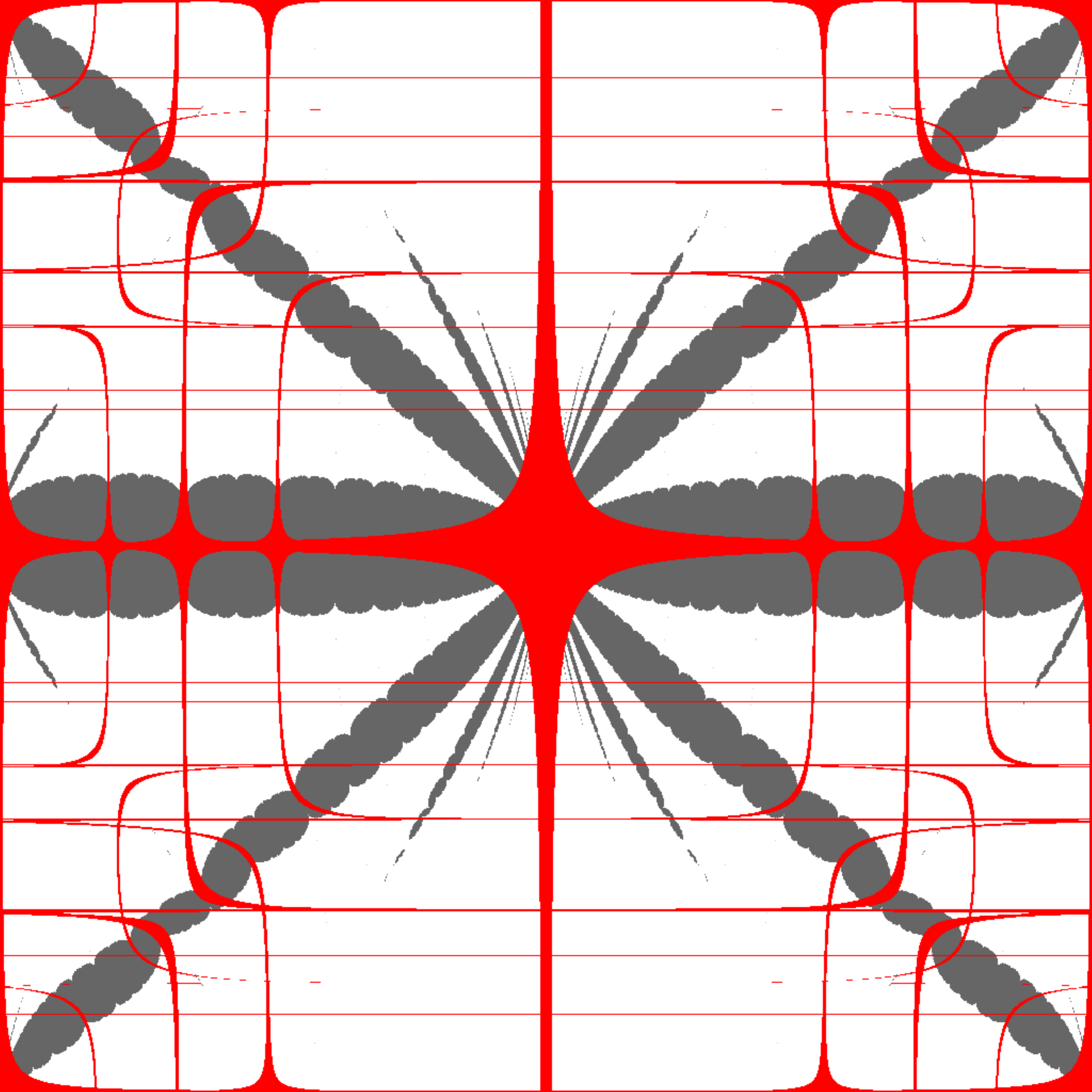}\;\;\;\; \includegraphics[height=3.5cm]{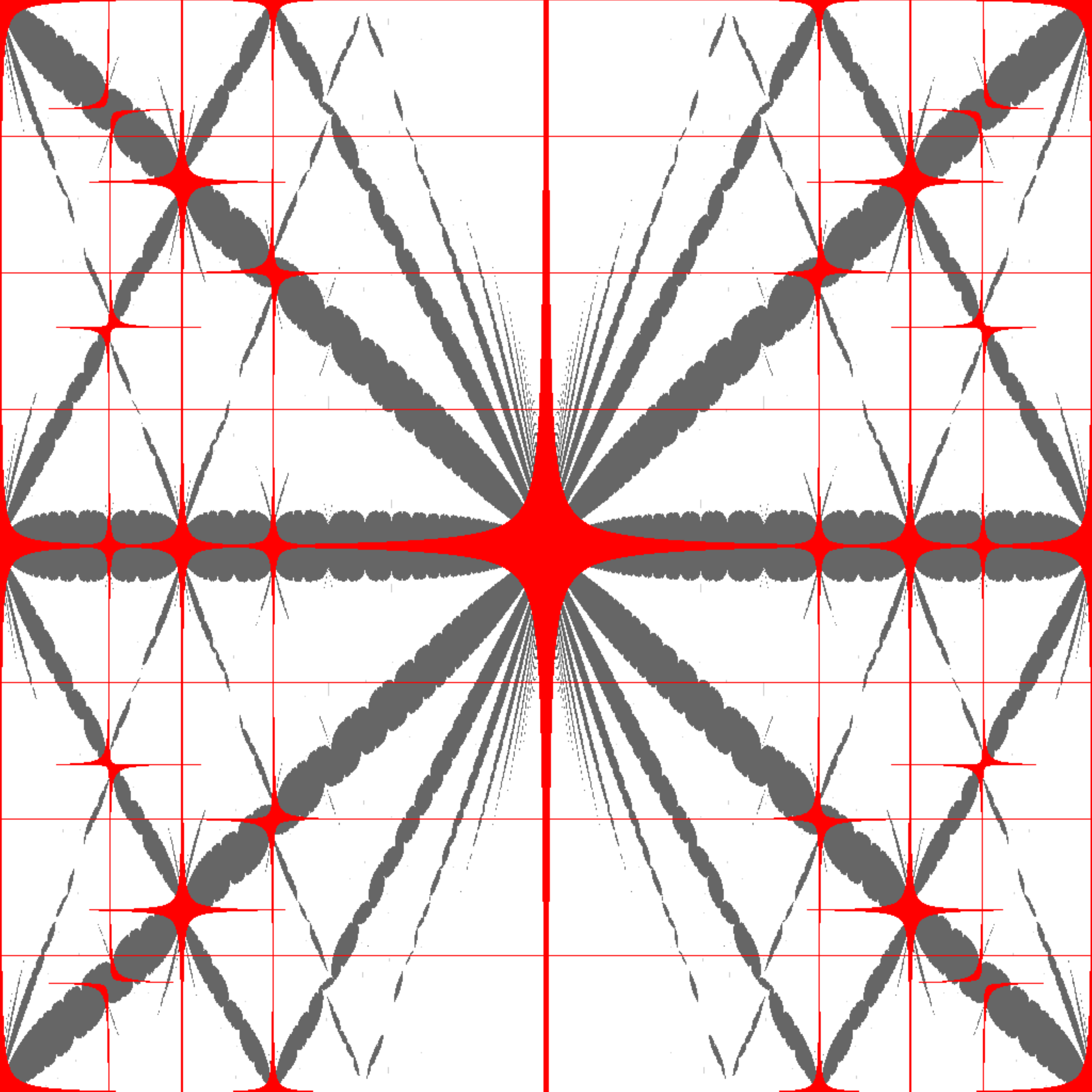}
\caption{The linear slice $\L_c$ with in red the `undecidable locus' (joint work with Y. Yamashita)}
\label{conn3}
\end{figure}

\begin{figure}
[hbt] \centering
\includegraphics[height=6cm]{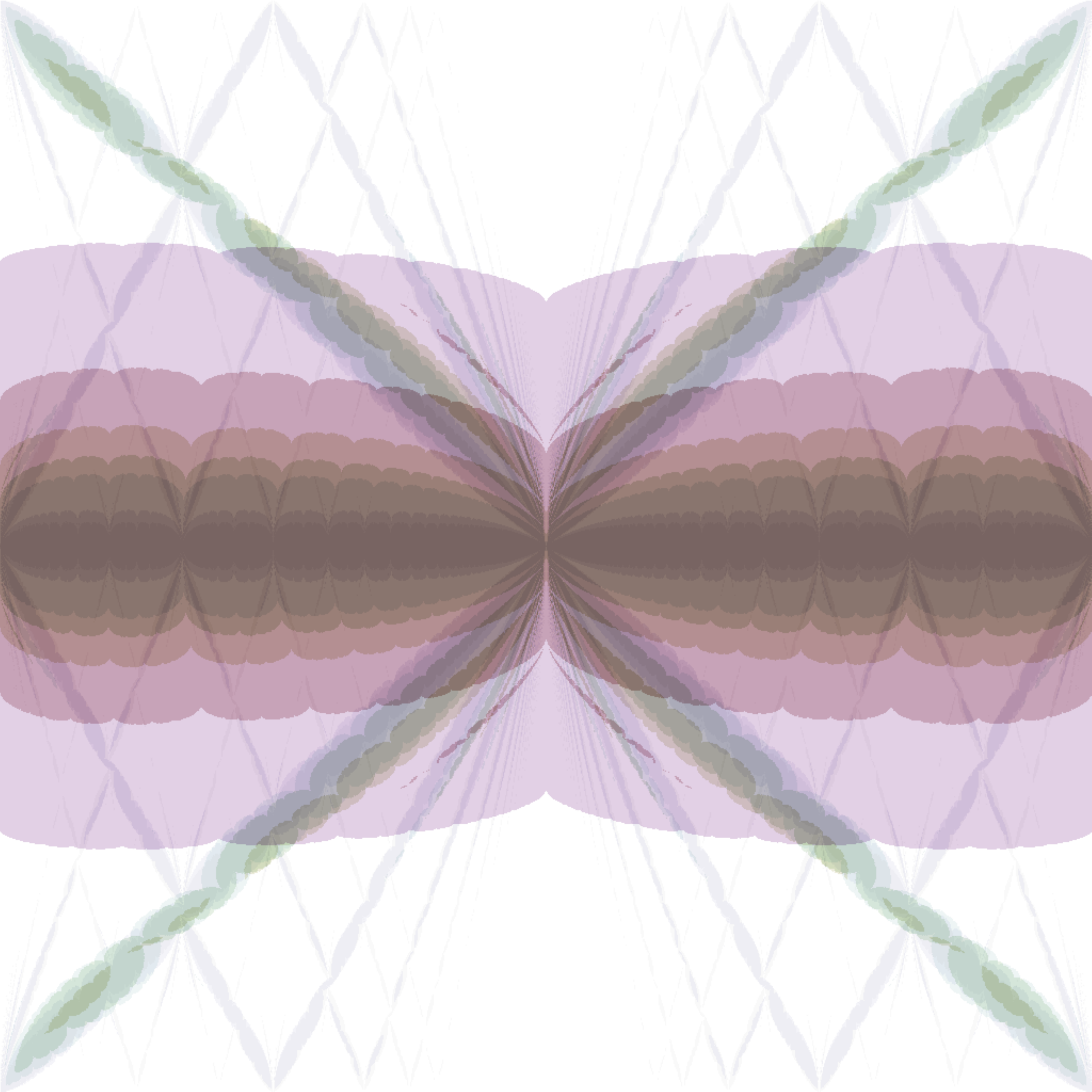}
\caption{The overlapping of the linear slices $\L_c$ when $c = 1,2,3,4,5,10, 20$ (joint work with Y. Yamashita).}
\label{conn2}
\end{figure}

Komori and Yamashita~\cite{kom_lin} proved that there exist two (positive) real constants $0 < C_0 \leqslant C_1$ such that for any $0< c < C_0$ the linear slice $\L_c(\Sigma_{1,1})$ is connected (and hence coincides with $\BM_c(\Sigma_{1,1})$), while for all $c > C_1$ the linear slice is not connected. They also conjectured:
\begin{Conjecture}
$C_0 = C_1.$
\end{Conjecture}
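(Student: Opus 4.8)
Since $C_0=C_1$ is stated as an open conjecture, we outline a strategy rather than give a complete argument. The equality is equivalent to the \emph{monotonicity} of connectedness in $c$: if $\L_{c_0}(\Sigma_{1,1})$ is disconnected then $\L_c(\Sigma_{1,1})$ is disconnected for every $c>c_0$ (equivalently, if $\L_{c^\ast}$ is connected then so is $\L_c$ for every $0<c<c^\ast$). The first step is to reduce connectedness to the absence of extra components. By Theorem~\ref{charac} the Bers--Maskit slice $\BM_c=\F_c\cup X_c^+\cup X_c^-$ is (simply) connected for all $c>0$, and since $\BM_c$ is the component of $\L_c$ containing the Fuchsian locus, $\L_c$ is connected if and only if $\L_c=\BM_c$. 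Thus disconnectedness means exactly that there is a component $K\subset\L_c\setminus\BM_c$, consisting of quasi-Fuchsian groups $G(c,\tau)$ with $\ell_G(\sigma)=2c$ whose convex-core bending lamination is \emph{not} supported on $\sigma$.

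The second step is to describe $\BM_c$ explicitly through the $c$--gluing construction. By Theorem~\ref{QF} and the Remark following it, together with the comparison with Parker--Parkkonen \cite{par_coo} for $\Sigma_{1,1}$ and with the Fenchel--Nielsen construction (Remark~\ref{FN-cons}), the holonomies $\rho_{\uc,\umb}$ of the one-curve $c$--gluing with $\boldsymbol\mu\in\C_{[0,\pi)}$ and $\Dev_{\uc,\umb}$ an embedding sweep out $\F_c\cup X_c^-$, with $X_c^+$ obtained by complex conjugation, and $\boldsymbol\mu$ is precisely the complex shear along $\sigma$. Hence, via the coordinate changes of Proposition~\ref{glue_plum} and Remark~\ref{FN-cons} relating $\boldsymbol\mu$, the plumbing parameter and the complex twist $\tau$, the slice $\BM_c$ is identified with the image of the set of $\boldsymbol\mu$ for which $\Dev_{\uc,\umb}$ is an embedding.

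The third and crucial step is a persistence statement for extra components. Fix $\tau_0\in K_0\subset\L_{c_0}\setminus\BM_{c_0}$, giving $G_0=G(c_0,\tau_0)\in\QF(\Sigma_{1,1})$, and let $c$ increase. Since $\FN_{\C}$ is a homeomorphism onto an open subset of $(\C_+/2\pi i\Z)\times(\C/2\pi i\Z)$ and $\QF(\Sigma_{1,1})$ is open and connected, there is a path $c\mapsto(c,\tau(c))\in\FN_{\C}(\QF(\Sigma_{1,1}))$ with $\tau(c_0)=\tau_0$, and the goal is to show $\tau(c)$ never enters $\BM_c$. Were it to do so, then at some intermediate value the convex core of $G(c,\tau(c))$ would be bent along $\sigma$ with bending angle in $(0,\pi)$; one would then show that this ``pleating along $\sigma$'' condition is closed and varies properly with $c$ (the length $\ell_G(\sigma)=2c$ being pinned away from $0$ and $\infty$, so no pinching or unbounded-length degeneration occurs along the path), and propagate it back to $c_0$, contradicting $\tau_0\notin\BM_{c_0}$.

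The main obstacle is precisely that $\L_c$ is a \emph{linear} slice and not a pleating variety: the groups in an extra component carry no distinguished bending data along $\sigma$, so the convexity and local-rigidity methods that control $\BM_c$ (as in Series \cite{ser_ker} and Komori--Parkkonen \cite{kom_ont}) do not apply to them directly. Concretely, one must exclude the scenario permitted by $C_0<C_1$, in which an extra component is born at some $c$, persists over an interval, and is then re-absorbed into $\BM_c$ at a larger value. Ruling this out appears to require a \emph{quantitative} input --- a lower bound, uniform for $c\ge c_0$, on the separation in the $\tau$-plane between $\BM_c$ and $\L_c\setminus\BM_c$ --- which in principle one can extract from trace estimates for the words in $\pi_1(\Sigma_{1,1})$ assembled from the explicit matrices $A_\infty,A_0,A_1$ of Section~\ref{sec:std} under the gluing maps $\boldsymbol\Omega_\e$, $\mathbf{J}$ and $\mathbf{T}_{\boldsymbol\mu}$; that is, one would have to convert the experimental pictures of Figures~\ref{conn} and \ref{conn2} into inequalities. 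An alternative route would be to prove first Conjecture~\ref{union}(1), the nesting $\BM_{c_2}\subseteq\BM_{c_1}$ for $c_1\le c_2$, and deduce from it the matching behaviour for the complements inside the linear slices, but this confronts the same linear versus pleating difficulty.
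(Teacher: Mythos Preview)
The paper gives no proof of this statement: it is recorded as an open conjecture of Komori and Yamashita, with nothing beyond the bare assertion. You correctly recognise this and present only a heuristic strategy, not a proof.

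As a piece of exposition your outline is reasonable, but since the task is to compare with the paper's proof, the honest summary is that there is nothing to compare: the paper offers no argument, partial or otherwise. Your text goes further than the paper does by sketching a possible route via monotonicity of connectedness in $c$ and persistence of extra components, and you are candid that the ``crucial step'' (showing an extra component cannot be re-absorbed into $\BM_c$ as $c$ increases) is exactly where the argument breaks down. That gap is genuine and is the reason the conjecture remains open; your proposed remedies (uniform separation bounds from trace estimates, or deducing it from Conjecture~\ref{union}(1)) are plausible directions but are themselves unproven. So your write-up is an appropriate discussion of an open problem, not a proof, and should not be presented as one.
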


Together with Yamashita, we wrote a computer program which draws these slices $\L_c$ for different values of the parameter $c$, see Figure \ref{conn} and \ref{conn2}. It is worth noting that this program assumes Bowditch's conjecture is correct; see Conjecture A of \cite{bow_mar}. These figures suggest many questions and conjecture about the bahaviour of these slices. For example, Figure \ref{conn2} shows evidence for the following conjecture. Recall that $\M^{tot}$ is the total Maskit slice (defined with respect to the pants decomposition $\PC$).

\begin{Conjecture}\label{union}
Given $c_1, c_2 \in \R_+$, we have:
\begin{enumerate}
\item If $c_1 \leqslant c_2,$ then $\BM_{c_2} \subseteq \BM_{c_1}$.
\item $\M^{tot}$ corresponds to $\cup_{c > 0} \BM_{c}.$
\end{enumerate}
\end{Conjecture}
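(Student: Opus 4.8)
The plan is to combine a pleating-variety description of $\BM_c$ with the complex-earthquake machinery and the limiting behaviour established in Theorem~\ref{thmE}. The common first step is to give $\BM_c(\Sigma)$ a concrete geometric model. By Theorem~\ref{QF} and the discussion following it (compare Theorem~4.2 of~\cite{par_coo}), every $\umb$ in the open set $\U_\uc = \{\umb \in (\C_{[0,\pi)})^\xi \mid \mathrm{Dev}_{\uc,\umb}\text{ is an embedding}\}$ produces a quasi-Fuchsian group with $\FN_\C$--coordinates $(\uc,\ut)$ whose bottom convex-core boundary is pleated exactly along $\PC$, with bending angles $\Im\boldsymbol\mu_i\in[0,\pi)$ and real twist determined by $\Re\boldsymbol\mu_i$; moreover $\U_\uc$ contains a neighbourhood of the Fuchsian locus. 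I would first show that $\U_\uc$ is connected and that the saturation of its image under the two families of symmetries of $\L_\uc$ recorded above — the Dehn-twist periodicities $\ut_i\mapsto\ut_i+2c_i$ and the reflections $\ut_i\mapsto-\ut_i$ — is exactly $\BM_c$. This realises $\BM_c$ as the pleating variety of structures bent along $\PC$; the reverse inclusion, that every point of $\BM_c$ arises this way, should follow because the bending lamination varies continuously over the Fuchsian component and a measured sublamination of a multicurve supported there can only be carried by $\PC$.

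For part~(1), fix $c_1\le c_2$ and $\ut\in\BM_{c_2}$, with associated quasi-Fuchsian group $G$ bent along $\PC$ with real pants-curve lengths $2c_2^{(i)}$ and bending angles $\Im\ut_i$. Consider the segment $\uc(s)=(1-s)\,\uc^{(2)}+s\,\uc^{(1)}$, $s\in[0,1]$, and deform $G$ keeping the shears $\Re\ut_i$ and the bending angles $\Im\ut_i$ fixed while decreasing the lengths of the pleating curves along $\uc(s)$; this is a complex-earthquake deformation of the bottom pleated surface (cf.~\cite{mcm_com,eps_con}), in which the underlying hyperbolic surface is modified by the real earthquake shortening the $\sigma_i$ and the bending data is unchanged. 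The point is that a locally convex pleated surface bent along a multicurve with all angles in $(-\pi,\pi)$ remains locally convex and embedded when its pleating curves are shortened, so the Maskit combination argument of Theorem~\ref{QF} applies uniformly along the path; hence $(\uc(s),\ut)\in\FN_\C(\QF(\Sigma))$ for all $s$, the bending locus stays equal to $\PC$, and the path never leaves the component of the Fuchsian locus. At $s=1$ this gives $\ut\in\BM_{c_1}$.

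For part~(2), part~(1) makes $\{\BM_c\}_{c>0}$ a monotone family, so $\bigcup_{c>0}\BM_c$ is its increasing union as $\uc\to\underline0$; I would match this with $\M^{tot}$ after passing to the rescaled parameter $\mu_i=(i\pi-\boldsymbol\mu_i)/c_i$ of Proposition~\ref{glue_plum}, in which the Maloni--Series/Kra structures parametrise $\M^{tot}$ by $\um\in\HH^\xi$ and the images of the $\BM_c$ form an increasing family of subregions of $\HH^\xi$. By Theorem~\ref{thmE}, as $\uc\to\underline0$ with $\um$ fixed the structures $\Sigma(\uc,\umb)$ converge to $\Sigma(\um)$, which (when its developing map is an embedding) is exactly a group in $\M^{tot}$, the pinched pants curves becoming the punctures of the bottom thrice-punctured spheres. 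Thus one direction is that every $G\in\M^{tot}$ is a geometric limit of groups $G_c\in\BM_c$ as $c\to0$ — the statement that $\M^{tot}$ sits on $\partial\QF$ as the limit of the Bers--Maskit slices, extracted from the locally uniform convergence of developing maps in Theorem~\ref{thmE} together with stability of geometrically finite ends; conversely, any sequence $G_{c_n}\in\BM_{c_n}$ with $c_n\to0$ and $\um$ in a compact subset of $\HH^\xi$ subconverges to a geometrically finite group with the pants curves pinched and $\Sigma$ on the other side, hence to a point of $\M^{tot}$.

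The main obstacle I expect is part~(1) when some bending angle $\Im\ut_i$ approaches $\pi$: there the bent annulus around $\sigma_i$ degenerates towards a cusp, the uniform convexity and discreteness estimates underlying the combination argument become delicate, and one must rule out the deformation accidentally leaving $\QF$ or crossing into one of the other components of $\L_c$ (the extra components visible in Figures~\ref{conn} and~\ref{conn2}). This is in effect the question of whether bending a Fuchsian group along a single multicurve with all angles in $(-\pi,\pi)$ always stays quasi-Fuchsian and realises that multicurve as the full pleating locus, uniformly in the lengths $\uc$; a clean proof of that, combined with the geometric-limit analysis above, would yield both parts. A secondary, bookkeeping difficulty is that the Dehn-twist periodicities have fundamental domains of width $2c_i$ in the $\Re\ut_i$ direction, which collapse as $c\to0$, so that the ``correspondence'' asserted in (2) must be phrased in the rescaled coordinates $\um$ in which these domains stabilise.
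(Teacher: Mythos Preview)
The statement you are attempting to prove is labelled \emph{Conjecture}~\ref{union} in the paper, and the paper offers no proof: it is presented as an open problem, motivated only by the numerical evidence of Figures~\ref{conn} and~\ref{conn2}. There is therefore no argument of the authors to compare your proposal against.

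As for the proposal itself, it is a reasonable outline of a strategy, and you correctly locate the main difficulty at the end. But several of the steps you treat as routine are precisely the content of the conjecture. First, your identification of $\BM_c$ with the pleating variety bent along $\PC$ is the generalisation of Theorem~\ref{charac} to higher-complexity $\Sigma$, and the paper explicitly flags this as subtle when $|\PC|>1$, since bending in opposite directions along different pants curves destroys convexity; your ``reverse inclusion'' argument via continuity of the bending lamination does not address this. Second, the heart of your argument for~(1) is the assertion that a locally convex pleated surface bent along a multicurve with fixed angles in $(-\pi,\pi)$ remains embedded and quasi-Fuchsian when the pleating curves are shortened. This is not a known monotonicity result; it is essentially equivalent to the conjecture itself, and the combination-theorem hypotheses of Theorem~\ref{QF} do not automatically persist along such a path (indeed the existence of the exotic components of $\L_c$ for large $c$ shows that discreteness can fail for some $(\uc,\ut)$ with $\Im\ut_i\in(-\pi,\pi)$). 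Third, for~(2) you invoke Theorem~\ref{thmE} to produce limits in $\M^{tot}$, but that theorem gives convergence of projective structures, not that the limiting developing map is an embedding or that the limit group is geometrically finite with the asserted end invariants; the passage from ``structures converge'' to ``holonomies lie in $\M^{tot}$'' requires additional control (e.g.\ strong convergence and persistence of geometric finiteness) that you have not supplied.

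In short: your sketch identifies the right circle of ideas (pleating varieties, complex earthquakes, geometric limits), and your self-diagnosed obstacle near $\Im\ut_i=\pi$ is genuine, but the unproven monotonicity-of-embedding step in~(1) is the conjecture, not a lemma toward it.
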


Komori and Yamashita's proofs cover only the case of the once punctured torus $\Sigma_{1,1}$, but we think that these `exotic components' are related to the wrapping maps, see \cite{and_alg, bro_sel}, and the obstruction of the existence of these wrapping maps as studied by Evans and Holt \cite{eva_non}. 

\begin{appendix}
  
\section{Calculation for the standard pair of pants}\label{app:degenerate}

In Section 5.2 of \cite{mas_mat} Maskit describes a group $G'$ such that $\HH^2/G'$ is a three holed sphere with infinite funnel ends such that the length of the three geodesic parallel to the boundary is $2c_{1}, 2c_{2}$ and $2c_{3}$, respectively. In particular, $G'$ is generated by $A'_{\infty}$ and $A'_{0}$ (or, alternatively, the group $G'$ has the following presentation $G' = \langle A'_\infty, A'_0, A'_1 | A'_\infty A'_0A'_1 = \Id \rangle$), where $$A'_\infty = A'_\infty(c_{1},c_{2},c_{3}) =\left(
\begin{array}{cc}
e^{c_{1}} & 0 \\
0 & e^{-c_{1}} \\
\end{array}
\right),$$ $$A'_0 = A'_0(c_{1},c_{2},c_{3}) = \frac{1}{\sinh\nu_1} \left(
\begin{array}{cc}
\sinh(\nu_1 - c_{2}) & \sinh c_{2} \\
-\sinh c_{2} & \sinh(\nu_1 + c_{2}) \\
\end{array}
\right),$$ $$A'_1 = A'_1(c_{1},c_{2},c_{3}) = \frac{1}{\sinh\nu_2} \left(
\begin{array}{cc}
\sinh(\nu_2 - c_{3}) & e^{c_{1}}\sinh c_{3} \\
-e^{-c_{1}}\sinh c_{3} & \sinh(\nu_2 + c_{3}) \\
\end{array}
\right)$$
and $\nu_1$ and $\nu_2$ are positive numbers defined by Equation \ref{nu}.

As explained before, when you have to take limits as $c_{i_j}\to 0$, it is important to choose well the conjugation class of the subgroup of $\PSL(2,\C)$. So, inspired by Parker--Parkonnen \cite{par_coo}, we conjugate this group by the matrix $V =  \left(
\begin{array}{cc}
\cosh(\frac{c_{1}}{2}) & -\cosh(\frac{c_{1}}{2}) \\
\sinh(\frac{c_{1}}{2}) & \sinh(\frac{c_{1}}{2}) \\
\end{array}
\right)$ which maps $$0 \mapsto -\coth(\frac{c_{1}}{2}), \;\;\infty \mapsto \coth(\frac{c_{1}}{2})\;\;\text{and}\;\; 1 \mapsto 0.$$
In this way, we get the group $\boldsymbol\Gamma(c_{1}, c_{2}, c_{3}) = \langle A_\infty, A_0, A_1 |  A_\infty A_0 A_1 = \Id  \rangle$ introduced in Section \ref{sec:std},  where $A_\e = VA'_\e V^{-1}$ for all $\e$ in the cyclically ordered set $\{0,1,\infty\}$.

If one or more of the boundary components are punctures, then the calculations are different. In the case $c_1, c_2 \neq 0$ and $c_3= 0$, we have $G' = \langle A'_\infty, A'_0 \rangle$, where 
$$A'_\infty = A'_\infty(c_{1},c_{2}, 0) =\left(
\begin{array}{cc}
e^{c_{1}} & 0 \\
0 & e^{-c_{1}} \\
\end{array}
\right),$$ 
$$A'_0 = A'_0(c_{1},c_{2}, 0) = \frac{1}{\sinh\nu} \left(
\begin{array}{cc}
\sinh(\nu - c_{2}) & \sinh c_{2} \\
-\sinh c_{2} & \sinh(\nu + c_{2}) \\
\end{array}
\right)$$ 
and $\coth \nu = \frac{\cosh c_{1}\cosh c_{2}+\cosh c_{3}}{\sinh c_{1}\sinh c_{2}}$ and $\nu >0$. Conjugating by the map $V$ above we get $\boldsymbol\Gamma(c_{1}, c_{2}, 0) = \langle A_\infty, A_0 \rangle$, where
$$A_\infty= A_\infty(c_{1},c_{2},0) = \left(
\begin{array}{cc}
\cosh c_{1}  & \cosh c_{1}+1 \\
\cosh c_{1}-1 & \cosh c_{1} \\
\end{array}
\right),$$ $$A_0 = A_0(c_{i_1},c_{i_2},c_{i_3}) =  \left(
\begin{array}{cc}
\cosh c_{2} & - \frac{\cosh c_1 +1}{\cosh c_1}\tanh(\frac{\nu}{2}) \sinh c_{2} \\
-\frac{\cosh c_1 -1}{\cosh c_1}\coth(\frac{\nu}{2}) \sinh c_{2} & \cosh c_{2} \\
\end{array}
\right).$$

In the case $c_1\neq 0$ and $c_2 = c_3= 0$, we have 
$\langle A'_\infty, A'_0\rangle$, where 
$$A'_\infty = A'_\infty(c_{1},0, 0) =\left(
\begin{array}{cc}
e^{c_{1}} & 0 \\
0 & e^{-c_{1}} \\
\end{array}
\right),$$ 
$$A'_0 = A'_0(c_{1},0, 0) = \left(
\begin{array}{cc}
1 +\beta & -\beta \\
\beta &  1-\beta \\
\end{array}
\right),$$
where $\beta = - \frac{\cosh c_1 +1}{\sinh c_1}.$
After conjugating by the map $V$ above we get $\boldsymbol\Gamma(c_{1}, 0, 0) = \langle A_\infty, A_0 \rangle$, where
$$A_\infty= A_\infty(c_{1},0,0) = \left(
\begin{array}{cc}
\cosh c_{1}  & \cosh c_{1}+1 \\
\cosh c_{1}-1 & \cosh c_{1} \\
\end{array}
\right),$$ $$A_0 = A_0(c_{i_1},0, 0) =  \left(
\begin{array}{cc}
1 & 0 \\
-2  & 1 \\
\end{array}
\right).$$

\section{Examples: the case of the once punctured torus and our holed sphere}\label{app:example}

\subsection{Once punctured torus: the HNN--extension}\label{ssub:once}

When $\Sigma = \Sigma_{1,1}$ is a once-punctured torus, our construction is based on the description of Parker and Parkkonen~\cite{par_coo}. Let $X$ be a hyperbolic structure $X \in \teich(\Sigma)$ on $\Sigma$ and let $\s \subset \Sigma$ be a simple closed geodesic. Let $X_{0}$ be the hyperbolic structure on a once punctured cylinder obtained by cutting $X$ along $\s$. The two boundary components have the same length, say $2c \in \R_{+}$. 

\begin{figure}
[hbt] \centering
\includegraphics[height=4cm]{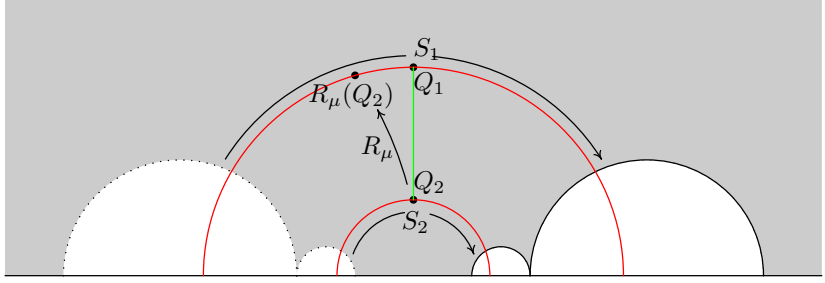}
\caption{The fundamental domain $F$ for the group $G_{0}$, in grey.}
\label{fig5}
\end{figure}

We can realise $X_{0}$ as a quotient $X_{0} = \HH^2/G_{0}$, where $G_{0} = \langle S_1, S_2 \rangle$, where:
$$S_1 = \left(
\begin{array}{cc}
\cosh c & \cosh c +1 \\
\cosh c - 1 & \cosh c \\
\end{array}
\right),\; S_2= \left(
\begin{array}{cc}
\cosh c & \cosh c - 1 \\
\cosh c + 1 & \cosh c \\
\end{array}
\right).$$

The fixed points of these transformations are $\mathrm{Fix}^\pm(S_1) =\pm \coth \frac{c}{2}$ and $\mathrm{Fix}^\pm(S_2) =\pm \tanh \frac{c}{2}$ and the products $$K_{1}=S_2^{-1}S_1 = \left(
\begin{array}{cc}
2\cosh c -1 & 2\cosh c \\
-2\cosh c & -2\cosh c-1 \\
\end{array}
\right)\;\; \mathrm{and}\;\; K_{2}=S_1S_2^{-1} = \left(
\begin{array}{cc}
-2\cosh c -1 & 2\cosh c \\
-2\cosh c & 2\cosh c-1 \\
\end{array}
\right)$$ are parabolic elements with fixed points at $-1$ and $1$, respectively. The two hyperbolic transformations $S_1$ and $S_2$ correspond to the two boundary geodesics of $X_{0}$, while $K_{1}$ and $K_{2}$ correspond to the puncture. A fundamental domain $F$ for the action of $G_0 = \langle S_1, S_2 \rangle$ on $\HH^{2}$ is:
\begin{equation}\label{fund_once}
 F = \{z \in \HH^{2} | |z \pm C_1| > R_1, |z \pm C_2|> R_2 \}, 
\end{equation}
where $C_1 = \frac{\cosh c}{\cosh c -1}$, $R_1= \frac{1}{\cosh c -1}$, $C_2 = \frac{\cosh c}{\cosh c +1}$ and $R_2 = \frac{1}{\cosh c + 1}$. The circles with center $\pm C_1$ and radius $R_1$ are the isometric circles of $S_1$ and $S_1^{- 1}$, while the ones with center $\pm C_2$ and radius $R_2$ are the isometric circles of $S_2$ and $S_2^{- 1}$. See Figure \ref{fig5}.

Notice that there is a unique simple geodesic arc on $X_{0}$ perpendicular to both the two geodesic boundary curves. A distinguished lift of this arc to the fundamental set $F$ is the intersection of $F$ with the positive imaginary axis, that is, the segment of the positive imaginary axis connecting $Q_2 = i \tanh \frac{c}{2} \in \mathrm{Axis}(S_2)$ to $Q_1 = i\coth \frac{c}{2}\in \mathrm{Axis}(S_1).$ The hyperbolic surface $X$ can be reconstructed by gluing together the two geodesic boundary components of $\Sigma_{0}$. This is done by adding to the group $G_{0}$ a hyperbolic M\"obius transformation $R_{\mathbf{\mu}}$. Since the transformation $R_{\mathbf{\mu}}$ has to conjugate the cyclic subgroups $\langle S_1\rangle$ and $\langle S_2\rangle$ in a way compatible with the gluing operation, $R_{\mathbf{\mu}}$ has to satisfy the relationship $R_{\mathbf{\mu}} S_2 R_{\mathbf{\mu}}^{-1}= S_1$, so it is fixed up to the parameter $\mathbf{\mu} \in \R$:
$$R_{\mathbf{\mu}} = \left(
\begin{array}{cc}
\cosh \frac{\mathbf{\mu}}{2}\coth \frac{c}{2} & -\sinh\frac{\mathbf{\mu}}{2} \\
-\sinh\frac{\mathbf{\mu}}{2} & \cosh\frac{\mathbf{\mu}}{2}\tanh \frac{c}{2} \\
\end{array}
\right)\in \SL(2, \R).$$
We form, in this way, a new Fuchsian group $G$ which is a \textit{Higman--Neumann--Neumann--extension} (shortened to HNN--extension) of $G_{0}$ by the element $R_{\mathbf{\mu}}$:
$$G = (G_{0})*_{\langle R_\t \rangle} = \langle G_{0}, R_\t \rangle.$$

There exists exactly one parameter $\mathbf{\mu}_{0} \in \R$ which reconstructs the original (marked) hyperbolic structure $X$ on the surface $\Sigma$, but the group $G$ is a Fuchsian group for any real parameter $\mathbf{\mu} \in \R$. (In fact, this description corresponds to the Fenchel-Nielsen construction for Fuchsian groups.) In addition this parameter has also the following geometrical interpretation: the transformation $R_{\mathbf{\mu}}$ maps the point $Q_2 = i \tanh \frac{c}{2} \in \mathrm{Axis}(S_2)$ to a point $Q_3$ on the axis of $S_1$, namely
$$Q_3 = R_{\mathbf{\mu}}(Q_2) = i\coth \frac{c}{2}\left(\sech\mathbf{\mu}+i\tanh\mathbf{\mu}\right) \in \mathrm{Axis}(S_1).$$ 
The parameter $\mathbf{\mu} \in \R$ is the signed distance of this point $Q_3$ from $Q_1 = i\coth \frac{c}{2}$, where the sign of $\Re\mathbf{\mu}$ is chosen to be positive if, moving from $Q_1$ to $Q_3$, takes one in a positive (anti-clockwise) direction around the axis of $S_1$, see Figure \ref{fig5}. 

  The construction makes sense also when one considers complex parameters $\mathbf{\mu} \in \C_{(-\pi, \pi)}$, but it does not give a group in $\QF$ for all $\mathbf{\mu} \in \C_{(-\pi, \pi)}$. However, by Theorem 3.5 of \cite{ser_onk}, then there exists $\epsilon > 0$, such that, if $|\mathbf{\mu}| < \e$, then $G = G(c, \mathbf{\mu})$ is quasi-Fuchsian.

\begin{Remark}\label{C_once}
  If you consider the action of $G$ on $\C \subset \hat\C$ (when $\mathbf{\mu} \in \C_{(-\pi, \pi)}$), you can see that the circle in $\C$ with centre at $i \tanh(\frac{c}{2}) \tan(\frac{\Im \mathbf{\mu}}{2})$ and radius $\tanh(\frac{c}{2}) \mathrm{sec}(\frac{\Im \mathbf{\mu}}{2})$ is mapped by $R_{\mathbf{\mu}}$ to the circle in $\C$ with centre at $-i \coth(\frac{c}{2}) \tan(\frac{\Im \mathbf{\mu}}{2})$ and radius $\coth(\frac{c}{2}) \mathrm{sec}(\frac{\Im \mathbf{\mu}}{2})$, where $\Im \mathbf{\mu} \in (-\pi, \pi)$. Moreover these circles are mapped to themselves under $\langle S_2 \rangle$ and $\langle S_1 \rangle$ respectively, since they pass through the fixed points of $S_2$ and $S_1$, respectively. These circles are hypercycles around $\Ax(S_1)$ and $\Ax(S_2)$, see Section \ref{sec:cglu} for the definition of hypercycle.
\end{Remark}

\subsection{Four holed sphere: the AFP--construction}\label{ssub:four}

We now discuss briefly the case of the four holed sphere $\Sigma = \Sigma_{0,4}$, see also Komori \cite{kom_deg}. Fix a hyperbolic structure $X \in \teich(\Sigma)$ on $\Sigma$ and let $\s \subset \Sigma$ be a simple closed geodesic. Let $X_{1}$ and $X_2$ be the hyperbolic structures on the doubly-punctured disks (obtained by cutting $X$ along $\s$), where the two boundary components $\s_1 \subset \partial X_{1}$ and $\s_2 \subset \partial X_{2}$ have the same length, say $4c \in \R_{+}$. 

We can realise $X_{1}$ as a quotient $X_{1} = \HH^2/G_{1}$, where $G_{1} = <V_{1}, V_{2}>$, where 
$$V_{1} = \left(
\begin{array}{cc}
\cosh c +1 & +\cosh c \\
-\cosh c & -\cosh c +1\\
\end{array}
\right),\; V_{2}= \left(
\begin{array}{cc}
\cosh c-1 & -\cosh c \\
+\cosh c & -\cosh c-1 \\
\end{array}
\right).$$

\begin{figure}
[hbt] \centering
\includegraphics[height=4cm]{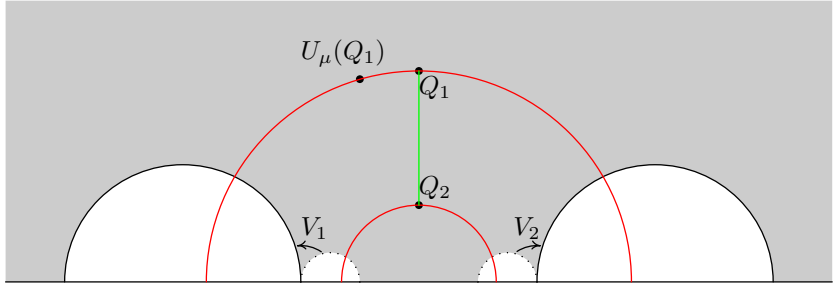}
\caption{The fundamental set $F'$ for the group $G_{1}$.}
\label{fig6}
\end{figure}

\begin{figure}
[hbt] \centering
\includegraphics[height=4cm]{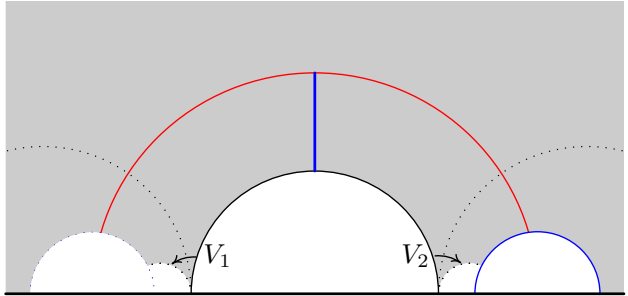}
\caption{The fundamental set $\hat{F}$ for the group $G_{1}$.}
\label{fig66}
\end{figure}

Using the notation of the previous section, we have that $V_{1}V_{2} = S_1^{-2}$ is a hyperbolic transformation with multiplier $2c \in \R_{+}$. The transformations $V_{1}$ and $V_{2}$ have fixed points $\mathrm{Fix}(V_{1}) = -1$ and $\mathrm{Fix}(V_{2}) = 1$ and correspond to the two punctures, while the hyperbolic transformations $V_{1}V_{2} = S_1^{-2}$ and $V_{2}V_{1} = S_2^{-2}$ correspond to the boundary geodesic of $X_{1}$. A fundamental set $F'$ for the action of the group $G_{1}$ on $\HH^2$ is defined by:
\begin{equation}\label{fund_four_4}
 F' = \{z \in \HH^{2} | |z \pm C_1| \ge R_1, |z \pm C_2|> R_2 \}, 
\end{equation} where $C_1, C_2, R_1, R_2$ are defined in Section \ref{ssub:once}; see Figure \ref{fig6}. Note that the geodesic along which we do the gluing, in this case, has double length with respect to the previous case. You can see this easily when you write down a different fundamental set for the action of this group on $\HH^2$:
\begin{equation}\label{F1}
  \hat{F}  = \{z \in \HH^{2} | |z| \ge 1, |z \pm C_1'| > R_1', |z \pm C'_2| \ge R'_2 \},
\end{equation}
where $C_1' = \frac{2\cosh c}{2\cosh c -1}$, $R_1' = \frac{1}{2\cosh c -1}$, $C'_2 = \pm\frac{1-2\cosh^2 c}{2\cosh c (\cosh c -1)}$ and $R'_2 = \frac{1}{2\cosh c (\cosh c -1)}$, see Figure \ref{fig66}. The circles with centers $\pm C_1$ and radius $R_1$ are the images of the unit circle under $V_1^{-1}$ and $V_2^{-1}$, while the circles with centers $\pm C'_1$ and radius $R'_1$ are the isometric circles of $S_1^2$ and $S_1^{-2}$. 

In this case, also, there is a unique simple geodesic arc on $X_{1}$ connecting the geodesic boundary curve to itself and intersecting it perpendicularly. A distinguished lift of this arc to the fundamental set $F'$ is the intersection of $F'$ with the positive imaginary axis, that is the segment of the positive imaginary axis connecting $Q_2 = i \tanh \frac{c}{2} \in \mathrm{Axis}(S_2)$ to $Q_1 = i\coth \frac{c}{2}\in \mathrm{Axis}(S_1).$

Similarly, we can realise $X_{2}$ as a quotient $X_{2} = \HH^2/G_{2}$, where $G_{2}$ is a Fuchsian group obtained by conjugating the group $G_{1}$ by a hyperbolic transformation $U_{\mathbf{\mu}}$, with $\mathbf{\mu} \in \R$: $$G_{2} = U_{\mathbf{\mu}} G_{1}U_{\mathbf{\mu}}^{-1} = \langle V_{3}, V_{4} \rangle,$$ where $V_{3} = U_{\mathbf{\mu}} V_{1}U_{\mathbf{\mu}}^{-1}$ and $V_{4} = U_{\mathbf{\mu}} V_{2}U_{\mathbf{\mu}}^{-1}$. The transformation $U_{\mathbf{\mu}}$ has to satisfy the relationship $U_{\mathbf{\mu}} S_1 U_{\mathbf{\mu}}^{-1} = S_1^{-1}$. As before, this condition fixes $U_{\mathbf{\mu}}$ up to one real parameter $\mathbf{\mu}$ and we can rewrite $U_{\mathbf{\mu}}$ in the form: 
$$U_{\mathbf{\mu}} = \left(
\begin{array}{cc}
\sinh \frac{\mathbf{\mu}}{2} & \cosh\frac{\mathbf{\mu}}{2}\coth \frac{c}{2} \\
-\cosh\frac{\mathbf{\mu}}{2}\tanh \frac{c}{2} & -\sinh\frac{\mathbf{\mu}}{2} \\
\end{array}
\right).$$ Following the notation of the previous section, we have $R_{\mathbf{\mu}} = U_{\mathbf{\mu}} \cdot L$, where $L = \left(
\begin{array}{cc}
0 & -1 \\
1 & 0\\
\end{array}
\right)$. 
Again, the parameter $\mathbf{\mu} \in \R$ has a geometrical interpretation: defining the points $Q_1$, $Q_2$ and $Q_3$ as in the previous section, $\mathbf{\mu}$ is the signed distance of the point $Q_3$ from $Q_1 = i\coth \frac{c}{2}$, where the sign of $\mathbf{\mu}$ is chosen to be positive if moving from $Q_1$ to $Q_3$ takes one in a positive (anti-clockwise) direction around the axis of $S_1$. In fact, since $L(Q_2) = Q_1$, we have that $Q_3 = R_{\mathbf{\mu}}(Q_2) = U_{\mathbf{\mu}}(Q_1)$.

The surface $X$ is obtained by gluing together the two geodesic boundary components $\s_1$ of $X_{1}$ and $\s_2$ of $X_{2}$. This procedure is done by amalgamating the groups $G_{1}$ and $G_{2}$ along the common cyclic subgroup $\langle S_1^{2}\rangle$. We form, in this way, a new Fuchsian group $G$ which is an AFP (short for \textit{amalgamated free product}) of the groups $G_{1}$ and $G_{2}$ along the common cyclic subgroup $\langle S_1^{2}\rangle$:
$$G = G(c, \mathbf{\mu}) = (G_{1}) *_{\langle S_1^{2} \rangle}(G_{2}) = \langle V_{1}, V_{2}, V_{3}, V_{4}| V_{1}V_{2}V_{3}V_{4} = \Id\rangle.$$

  We remark, as before, that the construction also makes sense when one considers complex parameters $\mathbf{\mu} \in \C_{(-\pi, \pi)}$, but the constructed group is not necessarily in $\QF(\Sigma_{0,4})$ for all $\mathbf{\mu} \in \C_{(-\pi, \pi)}$. On the other hand, by Theorem 3.5 of \cite{ser_onk}, there exists $\epsilon > 0$, such that if $|\mathbf{\mu}| < \e$, then the group $G = G(c, \mathbf{\mu})$ is quasi-Fuchsian.

\begin{Remark}\label{four_hyper} 
  If you consider the action of $G$ on $\C \subset \hat \C$ (when $\mathbf{\mu} \in \C_{(-\pi, \pi)}$), you can see that the circle with centre at $-i \coth(\frac{c}{2}) \tan(\frac{\Im \mathbf{\mu}}{2})$ and radius $\coth(\frac{c}{2}) \sec(\frac{\Im \mathbf{\mu}}{2})$ is mapped by $U_\mathbf{\mu}$ to itself. Moreover this circle is mapped to itself under $\langle S_1^2 \rangle$. This circle is a hypercycle of $\Ax(S_1) = \Ax(S_1^2)$. 
\end{Remark}
  
\end{appendix}

\bibliographystyle{amsalpha}
\bibliography{sample1}

\providecommand{\bysame}{\leavevmode\hbox to3em{\hrulefill}\thinspace}
\providecommand{\MR}{\relax\ifhmode\unskip\space\fi MR }
\providecommand{\MRhref}[2]{%
  \href{http://www.ams.org/mathscinet-getitem?mr=#1}{#2}
}
\providecommand{\href}[2]{#2}
\begin{thebibliography}{McM98}

\bibitem[AC96]{and_alg}
James~W. Anderson and Richard~D. Canary, \emph{Algebraic limits of {K}leinian
  groups which rearrange the pages of a book}, Invent. Math. \textbf{126}
  (1996), no.~2, 205--214. \MR{1411128 (97h:57025)}

\bibitem[BH01]{bro_sel}
Kenneth~W. Bromberg and John Holt, \emph{Self-bumping of deformation spaces of
  hyperbolic 3-manifolds}, J. Differential Geom. \textbf{57} (2001), no.~1,
  47--65. \MR{1871491 (2002i:57015)}

\bibitem[Bow98]{bow_mar}
Brian~H. Bowditch, \emph{Markoff triples and quasi-{F}uchsian groups}, Proc.
  London Math. Soc. (3) \textbf{77} (1998), no.~3, 697--736. \MR{1643429
  (99f:57014)}

\bibitem[Bro11]{bro_the}
Kenneth~W. Bromberg, \emph{The space of {K}leinian punctured torus groups is
  not locally connected}, Duke Math. J. \textbf{156} (2011), no.~3, 387--427.
  \MR{2772066}

\bibitem[Dum09]{dum_com}
David Dumas, \emph{Complex projective structures}, Handbook of
  {T}eichm{\"u}ller theory. {V}ol. {II}, IRMA Lect. Math. Theor. Phys.,
  vol.~13, Eur. Math. Soc., Z{\"u}rich, 2009, pp.~455--508. \MR{2497780
  (2010j:32043)}

\bibitem[EH08]{eva_non}
Richard Evans and John Holt, \emph{Non-wrapping of hyperbolic interval
  bundles}, Geom. Funct. Anal. \textbf{18} (2008), no.~1, 98--119. \MR{2399097
  (2009g:57028)}

\bibitem[EM87]{eps_con}
David B.~A. Epstein and Albert Marden, \emph{Convex hulls in hyperbolic space,
  a theorem of {S}ullivan, and measured pleated surfaces}, Analytical and
  geometric aspects of hyperbolic space, London Math. Soc. Lecture Note Ser.,
  vol. 111, Cambridge Univ. Press, Cambridge, 1987, pp.~113--253. \MR{903852
  (89c:52014)}

\bibitem[EM10]{ear_hol}
Clifford~J Earle and Albert Marden, \emph{Holomorphic coordinates on
  teichm\"uller and compactified moduli space}, {U}npublished preprint, 2010.

\bibitem[Kom00]{kom_deg}
Yohei Komori, \emph{Degeneration of $\lambda$--slices to the {M}askit slice},
  {P}rivate notes, 2000.

\bibitem[Kou94]{kou_com}
Christos Kourouniotis, \emph{Complex length coordinates for quasi-{F}uchsian
  groups}, Mathematika \textbf{41} (1994), no.~1, 173--188. \MR{1288062
  (96g:30079)}

\bibitem[KP07]{kom_ont}
Yohei Komori and Jouni Parkkonen, \emph{On the shape of {B}ers-{M}askit
  slices}, Ann. Acad. Sci. Fenn. Math. \textbf{32} (2007), no.~1, 179--198.
  \MR{2297885 (2007m:30054)}

\bibitem[Kra90]{kra_hor}
Irwin Kra, \emph{Horocyclic coordinates for {R}iemann surfaces and moduli
  spaces. {I}. {T}eichm{\"u}ller and {R}iemann spaces of {K}leinian groups}, J.
  Amer. Math. Soc. \textbf{3} (1990), no.~3, 499--578. \MR{1049503 (91c:32014)}

\bibitem[KY12]{kom_lin}
Yohei Komori and Yasushi Yamashita, \emph{Linear slices of the quasifuchsian
  space of punctured tori}, Conform. Geom. Dyn. \textbf{16} (2012), 89--102.

\bibitem[Mag09]{mag_def}
Aaron~D. Magid, \emph{Deformation spaces of {K}leinian surface groups are not
  locally connected}, ProQuest LLC, Ann Arbor, MI, 2009, Thesis
  (Ph.D.)--University of Michigan. \MR{2713881}

\bibitem[Mal12]{mal_asy}
Sara Maloni, \emph{The asymptotic directions of pleating rays in the {M}askit
  embedding}, Conformal dynamics and hyperbolic geometry, Contemp. Math., vol.
  573, Amer. Math. Soc., Providence, RI, 2012, pp.~159--181. \MR{2964078}

\bibitem[Mal13]{mal_sli}
\bysame, \emph{Slices of quasifuchsian space}, Ph.D. thesis, University of
  Warwick, 2013.

\bibitem[Mas88]{mas_kle}
Bernard Maskit, \emph{Kleinian groups}, vol. 287, Springer-Verlag, Berlin,
  1988. \MR{959135 (90a:30132)}

\bibitem[Mas01]{mas_mat}
\bysame, \emph{Matrices for {F}enchel-{N}ielsen coordinates}, Ann. Acad. Sci.
  Fenn. Math. \textbf{26} (2001), no.~2, 267--304. \MR{1833242 (2002d:30051)}

\bibitem[McM98]{mcm_com}
Curtis~T. McMullen, \emph{Complex earthquakes and {T}eichm{\"u}ller theory}, J.
  Amer. Math. Soc. \textbf{11} (1998), no.~2, 283--320. \MR{1478844
  (98i:32030)}

\bibitem[MS10]{mal_top}
Sara Maloni and Caroline Series, \emph{Top terms of polynomial traces in
  {K}ra's plumbing construction}, Algebr. Geom. Topol. \textbf{10} (2010),
  no.~3, 1565--1607. \MR{2661536 (2011f:30085)}

\bibitem[PH92]{pen_com}
Robert~C. Penner and John~L. Harer, \emph{Combinatorics of train tracks},
  Annals of Mathematics Studies, vol. 125, Princeton University Press,
  Princeton, NJ, 1992. \MR{1144770 (94b:57018)}

\bibitem[PP98]{par_coo}
John~R. Parker and Jouni Parkkonen, \emph{Coordinates for quasi-{F}uchsian
  punctured torus spaces}, The {E}pstein birthday schrift, Geom. Topol.
  Monogr., vol.~1, Geom. Topol. Publ., Coventry, 1998, pp.~451--478.
  \MR{1668328 (2000d:30065)}

\bibitem[Ser01]{ser_onk}
Caroline Series, \emph{On {K}erckhoff minima and pleating loci for
  quasi-{F}uchsian groups}, Geom. Dedicata \textbf{88} (2001), no.~1-3,
  211--237. \MR{1877217 (2002j:30066)}

\bibitem[Ser12]{ser_ker}
\bysame, \emph{Kerckhoff's lines of minima in {T}eichm\"{u}ller space},
  Handbook of {T}eichm\"{u}ller theory. {V}olume {III}, IRMA Lect. Math. Theor.
  Phys., vol.~17, Eur. Math. Soc., Z\"{u}rich, 2012, pp.~123--153. \MR{2952765}

\bibitem[Tan94]{tan_com}
Ser~Peow Tan, \emph{Complex {F}enchel-{N}ielsen coordinates for
  quasi-{F}uchsian structures}, Internat. J. Math. \textbf{5} (1994), no.~2,
  239--251. \MR{1266284 (94m:32030)}

\end{thebibliography}

\end{document}